\newcommand{\email}[1]{\href{mailto:#1}{\nolinkurl{#1}}}
\newlength{\mySubFigSize}
\definecolor{labelkey}{rgb}{0,0.08,0.45}
\definecolor{refkey}{rgb}{0,0.6,0.0}
\definecolor{Brown}{rgb}{0.45,0.0,0.05}
\definecolor{dgreen}{rgb}{0.00,0.49,0.00}
\definecolor{dblue}{HTML}{0455BF}
\definecolor{orng}{HTML}{D35400}
\definecolor{dred}{HTML}{D90404}
\definecolor{Dblue}{HTML}{8602DC}
\newcommand{\yosi}[2]{\ensuremath{\sideset{^{#2}}{}%
{\operatorname{}}\!\!#1}}
\renewcommand{\leq}{\ensuremath{\leqslant}}
\renewcommand{\geq}{\ensuremath{\geqslant}}
\newcommand{\minimize}[2]{\ensuremath{\underset{\substack{{#1}}}%
{\text{minimize}}\;\;#2 }}
\newcommand{\proxc}[2]{{#1}{\ensuremath{\raisebox{0.2mm}%
{\mbox{\scriptsize\rotatebox[origin=c]{45}%
{\tiny$\,\square\,$}}}{#2}}}}
\newcommand{\proxcc}[2]{{#1}{\ensuremath{\raisebox{0.2mm}%
{\mbox{\scriptsize\rotatebox[origin=c]{45}%
{\tiny$\,\blacksquare\,$}}}{#2}}}}
\newcommand{\proxcg}[2]{{#1}{\ensuremath{
\overset{\hspace{-3mm}\gamma}{{\raisebox{0.2mm}%
{\mbox{\scriptsize\rotatebox[origin=c]{45}%
{\tiny$\,\square\,$}}}{#2}}}}}}
\newcommand{\Scal}[2]{\big\langle{#1}\;\big|\:{#2}\big\rangle} 
\newcommand{\scal}[2]{{\langle{{#1}\mid{#2}}\rangle}}
\newcommand{\abscal}[2]{\left|\left\langle{{#1}\mid{#2}}%
\right\rangle\right|} 
\newcommand{\menge}[2]{\big\{{#1}~\big |~{#2}\big\}}
\newcommand{\Argmin}{\ensuremath{{\text{\rm Argmin}\,}}}
\newcommand{\qq}{\ensuremath{\mathscr{Q}}}
\newcommand{\HH}{\ensuremath{{\mathcal H}}}
\newcommand{\XX}{\ensuremath{{\mathcal X}}}
\newcommand{\YY}{\ensuremath{{\mathcal Y}}}
\newcommand{\GG}{\ensuremath{{\mathcal G}}}
\newcommand{\KK}{\ensuremath{{\mathcal K}}}
\newcommand{\emp}{\ensuremath{{\varnothing}}}
\newcommand{\lev}[1]%
{{\ensuremath{{{{\operatorname{lev}}}_{\leq #1}}\,}}}
\newcommand{\Id}{\ensuremath{\operatorname{Id}}}
\newcommand{\RR}{\ensuremath{\mathbb{R}}}
\newcommand{\RP}{\ensuremath{\left[0,+\infty\right[}}
\newcommand{\BL}{\ensuremath{\EuScript B}\,}
\newcommand{\RPP}{\ensuremath{\left]0,+\infty\right[}}
\newcommand{\RX}{\ensuremath{\left]-\infty,+\infty\right]}}
\newcommand{\RXX}{\ensuremath{\left[-\infty,+\infty\right]}}
\newcommand{\NN}{\ensuremath{\mathbb N}}
\newcommand{\intdom}{\ensuremath{\text{int\,dom}\,}}
\newcommand{\exi}{\ensuremath{\exists\,}}
\newcommand{\moyo}[2]{\ensuremath{\sideset{^{#2}}{}%
{\operatorname{}}\!\!#1}}
\newcommand{\ran}{\ensuremath{\text{\rm ran}\,}}
\newcommand{\cran}{\ensuremath{\overline{\text{\rm ran}}\,}}
\newcommand{\cdom}{\ensuremath{\overline{\text{\rm dom}}\,}}
\newcommand{\zer}{\ensuremath{\text{\rm zer}\,}}
\newcommand{\pinf}{\ensuremath{{+\infty}}}
\newcommand{\minf}{\ensuremath{{-\infty}}}
\newcommand{\dom}{\ensuremath{\text{\rm dom}\,}}
\newcommand{\prox}{\ensuremath{\text{\rm prox}}}
\newcommand{\proj}{\ensuremath{\text{\rm proj}}}
\newcommand{\Fix}{\ensuremath{\text{\rm Fix}\,}}
\newcommand{\gra}{\ensuremath{\text{\rm gra}\,}}
\newcommand{\inte}{\ensuremath{\text{\rm int}\,}}
\newcommand{\infconv}{\ensuremath{\mbox{\small$\,\square\,$}}}
\newcommand{\spushfwd}%
{\ensuremath{\mbox{$\,\triangleright\,$}}}
\newcommand{\pushfwd}%
{\ensuremath{\mbox{\Large$\,\triangleright\,$}}}
\newcommand{\epushfwd}%
{\ensuremath{\mbox{\Large$\,\trianglerightdot\,$}}}
\newcommand{\einfconv}%
{\ensuremath{\mbox{\footnotesize$\,\boxdot\,$}}}
\def\trianglerightdot{{\mkern+1mu \cdot\mkern-6mu \triangleright}}
\def\abstract{\noindent{\bfseries Abstract}. \ignorespaces}
\newtheorem{theorem}{Theorem}[section]
\newtheorem{lemma}[theorem]{Lemma}
\newtheorem{corollary}[theorem]{Corollary}
\newtheorem{proposition}[theorem]{Proposition}
\theoremstyle{plain}{\theorembodyfont{\rmfamily}%
\newtheorem{example}[theorem]{Example}}
\theoremstyle{plain}{\theorembodyfont{\rmfamily}%
\newtheorem{remark}[theorem]{Remark}}
\theoremstyle{plain}{\theorembodyfont{\rmfamily}%
}
\theoremstyle{plain}{\theorembodyfont{\rmfamily}%
}
\theoremstyle{plain}{\theorembodyfont{\rmfamily}%
}
\theoremstyle{plain}{\theorembodyfont{\rmfamily}%
\newtheorem{definition}[theorem]{Definition}}
\theoremstyle{plain}{\theorembodyfont{\rmfamily}%
\newtheorem{problem}[theorem]{Problem}}
\numberwithin{equation}{section}
\begin{document}

\title{\sffamily\huge 
Resolvent and Proximal Compositions\thanks{Contact author: 
P. L. Combettes, {\email{plc@math.ncsu.edu}},
phone:+1 (919) 515 2671. This work was supported by the 
National Science Foundation under grant CCF-2211123.}}

\author{Patrick L.\ Combettes\\ 
\small North Carolina State University,
Department of Mathematics,
Raleigh, NC 27695-8205, USA\\
\small \email{plc@math.ncsu.edu}\\
}

\date{{~}}
\maketitle

\begin{abstract} 
We introduce the resolvent composition, a monotonicity-preserving
operation between a linear operator and a set-valued operator, as
well as the proximal composition, a convexity-preserving operation
between a linear operator and a function. The two operations are
linked by the fact that, under mild assumptions, the
subdifferential of the proximal composition of a convex function is
the resolvent composition of its subdifferential. The resolvent and
proximal compositions are shown to encapsulate known concepts, such
as the resolvent and proximal averages, as well as new operations
pertinent to the analysis of equilibrium problems. A large core of
properties of these compositions is established and several
instantiations are discussed. Applications to the relaxation of
monotone inclusion and convex optimization problems are presented.
\end{abstract}

\begin{keywords}
Monotone operator,
proximal average,
proximal composition,
proximal point algorithm,
relaxed monotone inclusion,
resolvent average,
resolvent composition,
resolvent mixture.
\end{keywords}

\newpage
\section{Introduction}
\label{sec:1}

Throughout, $\HH$ and $\GG$ are real Hilbert spaces, $2^{\HH}$ is
the power set of $\HH$, $\Id_\HH$ is the identity operator of
$\HH$, and $\BL(\HH,\GG)$ is the space of bounded linear operators
from $\HH$ to $\GG$. Let $B\colon\GG\to 2^{\GG}$ be a set-valued
operator and denote by $J_B$ its resolvent, that is,
\begin{equation}
\label{e:r}
J_B=(B+\Id_\GG)^{-1}.
\end{equation}
The resolvent operator is a central tool in nonlinear analysis
\cite{Atto84,Livre1,Brez73,Bord18,Roc76a}, largely owing to the
fact that its set of fixed points 
$\menge{y\in\GG}{y\in J_{B}y}$ coincides with the set of
zeros $\menge{y\in\GG}{0\in By}$ of $B$, which models
equilibria in many fields; see for instance
\cite{Barb10,Bri18b,Buim22,Joca22,Svva20,Sign21,%
Facc03,Glow16,Merc80,Roc76b,Zei90B}. 
A standard operation between $B$ and a linear operator
$L\in\BL(\HH,\GG)$ that induces an operator from $\HH$ to $2^{\HH}$
is the composition 
\begin{equation}
\label{e:c}
L^*\circ B\circ L. 
\end{equation}
Early manifestations of this construct can be found in
\cite{Brow69,Rock67}. A somewhat dual operation is the parallel
composition $L^*\pushfwd B\colon\HH\to 2^{\HH}$  defined by
\cite{Livre1,Beck14} (see \cite{Bric23,Bord18} for
further applications)
\begin{equation} 
\label{e:1} 
L^*\pushfwd B=\big(L^*\circ B^{-1}\circ L\big)^{-1}.
\end{equation}
The objective of the present article is to investigate 
alternative compositions, which we call the 
\emph{resolvent composition} and the 
\emph{resolvent cocomposition}.

\begin{definition}
\label{d:1}
Let $L\in\BL(\HH,\GG)$ and $B\colon\GG\to 2^{\GG}$. The
\emph{resolvent composition} of $B$ with $L$ is the operator
$\proxc{L}{B}\colon\HH\to\ 2^{\HH}$ given by 
\begin{equation}
\label{e:2}
\proxc{L}{B}=L^*\pushfwd(B+\Id_{\GG})-\Id_{\HH}
\end{equation}
and the \emph{resolvent cocomposition} of $B$ with $L$ is 
$\proxcc{L}{B}=(\proxc{L}{B^{-1}})^{-1}$.
\end{definition}

The terminology in Definition~\ref{d:1} stems from the following
composition rule, which results from \eqref{e:r}, \eqref{e:2}, and
\eqref{e:1}.

\begin{proposition}
\label{p:0}
Let $L\in\BL(\HH,\GG)$ and $B\colon\GG\to 2^{\GG}$. Then
$J_{\proxc{L}{B}}=L^*\circ J_B\circ L$.
\end{proposition}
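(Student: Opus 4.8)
The plan is to unwind the three defining identities \eqref{e:r}, \eqref{e:2}, and \eqref{e:1} in sequence; the only structural fact required is that inversion of a set-valued operator is an involution.

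First I would start from the resolvent definition \eqref{e:r} applied to $\proxc{L}{B}$, writing
\[
J_{\proxc{L}{B}}=\big(\proxc{L}{B}+\Id_\HH\big)^{-1}.
\]
Then I would substitute the definition \eqref{e:2} of the resolvent composition, which is designed precisely so that the trailing $-\Id_\HH$ cancels the $+\Id_\HH$ arising from the resolvent, giving $\proxc{L}{B}+\Id_\HH=L^*\pushfwd(B+\Id_\GG)$. This reduces the claim to establishing that $\big(L^*\pushfwd(B+\Id_\GG)\big)^{-1}=L^*\circ J_B\circ L$.

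Next I would expand the parallel composition via \eqref{e:1}, namely
\[
L^*\pushfwd(B+\Id_\GG)=\big(L^*\circ(B+\Id_\GG)^{-1}\circ L\big)^{-1},
\]
and take inverses on both sides. Since the inverse of a set-valued operator is defined by reversing its graph, the operation $(\cdot)^{-1}$ is an involution, so the outer inverse disappears and we are left with $L^*\circ(B+\Id_\GG)^{-1}\circ L$. Recognizing $(B+\Id_\GG)^{-1}=J_B$ from \eqref{e:r} then yields the asserted identity.

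The main obstacle here is essentially bookkeeping rather than any genuine difficulty: the statement is a formal consequence of composing definitions. The one point deserving a word of care is the cancellation of the double inverse, which is valid for arbitrary set-valued operators, so that no monotonicity, single-valuedness, or domain or range hypotheses on $B$ or $L$ are needed. I expect no further subtleties.
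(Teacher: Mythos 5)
Your proof is correct and is exactly the argument the paper intends: the paper offers no separate proof, stating only that the proposition ``results from \eqref{e:r}, \eqref{e:2}, and \eqref{e:1},'' and your unwinding of those three identities --- including the cancellation of the double inverse, valid for arbitrary set-valued operators since inversion is graph reversal --- is precisely that derivation. Your observation that no monotonicity or other hypotheses are needed matches the paper's fully general statement.
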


The resolvent composition will be shown to encapsulate known
concepts as well as new operations pertinent to the analysis of
equilibrium problems. As an illustration, we recover below the
resolvent average.

\begin{example}[resolvent average]
\label{ex:1}
Let $0\neq p\in\NN$ and, for every $k\in\{1,\ldots,p\}$, let
$B_k\colon\HH\to 2^{\HH}$ and $\omega_k\in\RPP$. Additionally,
let $\GG$ be the standard product vector space $\HH^p$, with
generic element $\boldsymbol{y}=(y_k)_{1\leq k\leq p}$, equipped 
with the scalar product 
$(\boldsymbol{y},\boldsymbol{y}')\mapsto\sum_{k=1}^p
\omega_k\scal{y_k}{y'_k}$, and set 
$L\colon\HH\to\GG\colon x\mapsto (x,\ldots,x)$ and
$B\colon\GG\to 2^\GG\colon\boldsymbol{y}\mapsto
B_1y_1\times\cdots\times B_py_p$. Then 
$L^*\colon\GG\to\HH\colon\boldsymbol{y}\mapsto\sum_{k=1}^p
\omega_ky_k$ and we derive from \eqref{e:2} that
\begin{equation}
\label{e:4}
\proxc{L}{B}=\Bigg(\sum_{k=1}^p\omega_k
\big(B_k+\Id_\HH\big)^{-1}\Bigg)^{-1}-\Id_{\HH}
=\Bigg(\sum_{k=1}^p\omega_kJ_{B_k}\Bigg)^{-1}-\Id_{\HH}.
\end{equation}
In particular, if $\sum_{k=1}^p\omega_k=1$, then \eqref{e:4} is
the \emph{resolvent average} of the operators 
$(B_k)_{1\leq k\leq p}$. This operation is studied in 
\cite{Baus16,Baus13}, while 
$\sum_{k=1}^p\omega_kJ_{B_k}=J_{\proxc{L}{B}}$ shows up in common
zero problems \cite{Cras95,Lehd99}.
\end{example}

Given $L\in\BL(\HH,\GG)$ and a proper convex function
$g\colon\GG\to\RX$ with subdifferential $\partial g$, a question we
shall address is whether the resolvent composition
$\proxc{L}{\partial g}$ is itself a subdifferential operator and,
if so, of which function. Answering this question will lead to the
introduction of the following operations, where $\infconv$ denotes
infimal convolution and where $\qq_\HH=\|\cdot\|_{\HH}^2/2$ and
$\qq_\GG=\|\cdot\|_{\GG}^2/2$ are the canonical quadratic forms of
$\HH$ and $\GG$, respectively (see Section~\ref{sec:2} for
notation).

\begin{definition}
\label{d:2}
Let $L\in\BL(\HH,\GG)$ and let $g\colon\GG\to\RX$ be proper. The
\emph{proximal composition} of $g$ with $L$ is the function
$\proxc{L}{g}\colon\HH\to\RX$ given by 
\begin{equation}
\label{e:12}
\proxc{L}{g}=\big((g^*\infconv\qq_{\GG})\circ L\big)^*-\qq_{\HH}
\end{equation}
and the \emph{proximal cocomposition} of $g$ with $L$ is 
$\proxcc{L}{g}=(\proxc{L}{g^*})^*$.
\end{definition}

In connection with the above question, if $\|L\|\leq 1$ and if $g$
is lower semicontinuous and convex 
in Definition~\ref{d:2}, the proximal
composition will be shown to be linked to the resolvent composition
through the subdifferential identity 
\begin{equation}
\label{e:17}
\partial(\proxc{L}{g})=\proxc{L}{\partial g},
\end{equation}
and its proximity operator to be decomposable as
$\prox_{\proxc{L}{g}}=L^*\circ\prox_g\circ L$, which
explains the terminology in Definition~\ref{d:2}. Furthermore, we
shall see that the proximal composition captures notions such 
as the proximal average of convex functions.

We provide notation and preliminary results in Section~\ref{sec:2}.
Examples of resolvent compositions are presented in
Section~\ref{sec:3}. In Section~\ref{sec:4}, various properties of
the resolvent composition are investigated. Section~\ref{sec:5} is
devoted to the proximal composition and its properties.
Applications to monotone inclusion and variational problems
are discussed in Section~\ref{sec:6}.

\section{Notation and preliminary results}
\label{sec:2}

We refer to \cite{Livre1} for a detailed account of the following
elements of convex and nonlinear analysis. In addition to the
notation introduced in Section~\ref{sec:1}, we designate the direct
Hilbert sum of $\HH$ and $\GG$ by $\HH\oplus\GG$. The scalar
product of a Hilbert space is denoted by $\scal{\cdot}{\cdot}$ and
the associated norm by $\|\cdot\|$. 

Let $A\colon\HH\to 2^{\HH}$ be a set-valued operator. We denote by 
$\gra A=\menge{(x,x^*)\in\HH\times\HH}{x^*\in Ax}$ the graph of 
$A$, by $\dom A=\menge{x\in\HH}{Ax\neq\emp}$ the domain of $A$, by 
$\ran A=\menge{x^*\in\HH}{(\exi x\in\HH)\;x^*\in Ax}$ the range of
$A$, by $\zer A=\menge{x\in\HH}{0\in Ax}$ the set of zeros of $A$,
by $\Fix A=\menge{x\in\HH}{x\in Ax}$ the set of fixed points of
$A$, and by $A^{-1}$ the inverse of $A$, which is the set-valued
operator with graph $\menge{(x^*,x)\in\HH\times\HH}{x^*\in Ax}$. 
The parallel sum of $A$ and $B\colon\HH\to 2^{\HH}$ is
\begin{equation}
\label{e:parasum}
A\infconv B=\big(A^{-1}+B^{-1}\big)^{-1}.
\end{equation}
The resolvent of $A$ is 
$J_A=(A+\Id_\HH)^{-1}=A^{-1}\infconv\Id_\HH$
and the Yosida approximation of $A$ of index $\gamma\in\RPP$ is
$\yosi{A}{\gamma}=\gamma^{-1}(\Id_\HH-J_{\gamma A})$. Furthermore,
$A$ is injective if
\begin{equation}
(\forall x_1\in\HH)(\forall x_2\in\HH)\quad
Ax_1\cap Ax_2\neq\emp\quad\Rightarrow\quad x_1=x_2,
\end{equation}
monotone if 
\begin{equation} 
\label{e:mon2}
\big(\forall (x_1,x_1^*)\in\gra A\big)
\big(\forall (x_2,x_2^*)\in\gra A\big)\quad
\scal{x_1-x_2}{x_1^*-x_2^*}\geq 0,
\end{equation}
$\alpha$-strongly monotone for some $\alpha\in\RPP$ if
$A-\alpha\Id_\HH$ is monotone, and maximally monotone if 
\begin{equation} 
\label{e:maxmon2}
(\forall x_1\in\HH)(\forall x_1^*\in\HH)\quad\big[\:
(x_1,x_1^*)\in\gra A\;\;\Leftrightarrow\;\;
(\forall (x_2,x_2^*)\in\gra A)\;\;
\scal{x_1-x_2}{x_1^*-x_2^*}\geq 0\;\big].
\end{equation}

Let $D$ be a nonempty subset of $\HH$ and let $T\colon D\to\HH$. 
Then $T$ is nonexpansive if it is
$1$-Lipschitzian, firmly nonexpansive if
\begin{equation}
\label{e:f4}
(\forall x_1\in D)(\forall x_2\in D)\quad
\|Tx_1-Tx_2\|^2+\|(\Id_{\HH}-T)x_1-(\Id_{\HH}-T)x_2\|^2
\leq\|x_1-x_2\|^2,
\end{equation}
and strictly nonexpansive if 
\begin{equation}
(\forall x_1\in D)(\forall x_2\in D)\quad
x_1\neq x_2\quad\Rightarrow\quad\|Tx_1-Tx_2\|<\|x_1-x_2\|.
\end{equation}
Let $\beta\in\RPP$. Then $T$ is $\beta$-cocoercive if $\beta T$ is
firmly nonexpansive. 

A function $f\colon\HH\to\RX$ is proper if 
$\dom f=\menge{x\in\HH}{f(x)<\pinf}\neq\emp$, in which case
the set of global minimizers of $f$ is
denoted by $\Argmin f$; if $\Argmin f$ is a singleton, its unique
element is denoted by $\text{argmin}_{x\in\HH}\,f(x)$. 
The conjugate of $f\colon\HH\to\RXX$ is the function 
\begin{equation}
\label{e:conj}
f^*\colon\HH\to\RXX\colon x^*\mapsto\sup_{x\in\HH}
\big(\scal{x}{x^*}-f(x)\big). 
\end{equation}
The infimal convolution of $f\colon\HH\to\RX$ and
$g\colon\HH\to\RX$ is 
\begin{equation}
\label{e:infconv1}
f\infconv g\colon\HH\to\RXX\colon x\mapsto
\inf_{z\in\HH}\big(f(z)+g(x-z)\big)
\end{equation}
and the Moreau envelope of $f$ of index $\gamma\in\RPP$ is
\begin{equation}
\label{e:moyo0}
\moyo{f}{\gamma}=
f\infconv\big(\gamma^{-1}\qq_\HH\big).
\end{equation}
The infimal postcomposition of $f\colon\HH\to\RXX$ by
$L\in\BL(\HH,\GG)$ is
\begin{equation}
\label{e:postcomposition}
L\pushfwd f\colon\GG\to\RXX\colon y\mapsto
\inf f\big(L^{-1}\{y\}\big)=\inf_{\substack{x\in\HH\\ Lx=y}}f(x),
\end{equation}
and it is denoted by $L\epushfwd f$ if, for every $y\in L(\dom f)$,
there exists $x\in\HH$ such that $Lx=y$ and 
$(L\pushfwd f)(y)=f(x)\in\RX$.
We denote by $\Gamma_0(\HH)$ the class of proper
lower semicontinuous convex functions $f\colon\HH\to\RX$. Now let
$f\in\Gamma_0(\HH)$. The subdifferential of $f$ is 
\begin{equation}
\label{e:subdiff}
\partial f\colon\HH\to 2^{\HH}\colon x\mapsto\menge{x^*\in\HH}
{(\forall z\in\HH)\;\;\scal{z-x}{x^*}+f(x)\leq f(z)}
\end{equation}
and its inverse is 
\begin{equation}
\label{e:i}
(\partial f)^{-1}=\partial f^*. 
\end{equation}
Fermat's rule states that
\begin{equation}
\label{e:pierre}
\Argmin f=\zer\partial f.
\end{equation}
The proximity operator of $f$ is 
\begin{equation}
\label{e:dprox}
\prox_{f}=J_{\partial f}\colon\HH\to\HH\colon
x\mapsto\underset{z\in\HH}{\text{argmin}}\;
\bigg(f(z)+\frac{1}{2}\|x-z\|^2\bigg),
\end{equation}
and we have
\begin{equation}
\label{e:fermat}
\Argmin f=\Fix\prox_f.
\end{equation}
We say that $f$ is $\alpha$-strongly convex for some
$\alpha\in\RPP$ if $f-\alpha\qq_\HH$ is convex. 

Let $C$ be a subset of $\HH$. The interior of $C$ is denoted by
$\inte\,C$, the indicator function of $C$ by $\iota_C$, and the
distance function to $C$ by $d_C$. If $C$ is nonempty, closed, and
convex, the projection operator onto $C$ is denoted by $\proj_C$,
i.e., $\proj_C=\prox_{\iota_C}=J_{N_C}$, and the normal cone
operator of $C$ is $N_C=\partial\iota_C$.

Next, we state a few technical facts that will assist us in our
analysis. 

\begin{lemma}
\label{l:9}
Let $L\in\BL(\HH,\GG)$, let $\beta\in\RPP$, let $D$ be a nonempty
subset of $\GG$, and let $T\colon D\to\GG$ be $\beta$-cocoercive.
Then the following hold:
\begin{enumerate}
\item
\label{l:9i}
Suppose that $L\neq 0$. Then
$L^*\circ T\circ L$ is $\beta\|L\|^{-2}$-cocoercive.
\item
\label{l:9ii}
Suppose that $T$ is firmly nonexpansive and that $\|L\|\leq 1$. 
Then $L^*\circ T\circ L$ is firmly nonexpansive.
\item
\label{l:9iii}
Suppose that $D=\HH$, $T$ is firmly nonexpansive, and 
$\|L\|\leq 1$. Then $L^*\circ T\circ L$ is maximally monotone.
\end{enumerate}
\end{lemma}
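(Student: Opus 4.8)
The plan is to reduce all three items to the inner-product form of cocoercivity and then, for the last item, to invoke the maximal monotonicity of firmly nonexpansive operators with full domain. First I would record the equivalent version of \eqref{e:f4}: expanding the term $\|(\Id_\GG-T)x_1-(\Id_\GG-T)x_2\|^2$ and cancelling shows that firm nonexpansiveness of a map is equivalent to $\scal{x_1-x_2}{Tx_1-Tx_2}\geq\|Tx_1-Tx_2\|^2$, and hence that $\beta$-cocoercivity of $T$ is equivalent to
\[
(\forall y_1\in D)(\forall y_2\in D)\quad
\scal{y_1-y_2}{Ty_1-Ty_2}\geq\beta\|Ty_1-Ty_2\|^2 .
\]
Everything below is then a matter of transporting this single scalar inequality through $L$ and $L^*$.

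For \ref{l:9i}, set $S=L^*\circ T\circ L$ and fix $x_1,x_2$ in the domain of $S$. Using the adjoint identity $\scal{x_1-x_2}{L^*u}=\scal{L(x_1-x_2)}{u}$ with $u=T(Lx_1)-T(Lx_2)$, the cocoercivity inequality applied to the points $Lx_1,Lx_2\in D$ yields
\[
\scal{x_1-x_2}{Sx_1-Sx_2}
=\scal{Lx_1-Lx_2}{T(Lx_1)-T(Lx_2)}
\geq\beta\|T(Lx_1)-T(Lx_2)\|^2 .
\]
It remains to bound the right-hand side below by $\beta\|L\|^{-2}\|Sx_1-Sx_2\|^2$, which follows immediately from $\|L^*u\|\leq\|L\|\,\|u\|$: this gives $\|u\|^2\geq\|L\|^{-2}\|L^*u\|^2=\|L\|^{-2}\|Sx_1-Sx_2\|^2$, where the hypothesis $L\neq0$ is precisely what permits dividing by $\|L\|^2$. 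This is exactly $\beta\|L\|^{-2}$-cocoercivity of $S$.

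Item \ref{l:9ii} is the same computation specialized to $\beta=1$: now $\|L\|\leq1$ gives $\|u\|^2\geq\|L^*u\|^2$, so that
\[
\scal{x_1-x_2}{Sx_1-Sx_2}
\geq\|T(Lx_1)-T(Lx_2)\|^2\geq\|Sx_1-Sx_2\|^2 ,
\]
which is the $1$-cocoercivity, i.e.\ firm nonexpansiveness, of $S$. I note that this argument is uniform in $L$ and stays valid when $L=0$, so no separate degenerate case is needed, and that \ref{l:9ii} can alternatively be read off from \ref{l:9i} since $\|L\|^{-2}\geq1$ when $\|L\|\leq 1$.

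Finally, for \ref{l:9iii} I would apply \ref{l:9ii} to conclude that $S$ is firmly nonexpansive, and here the full-domain hypothesis on $T$ (so that $T$ is defined on all of $\GG$) forces $\dom S=\HH$. The remaining step is the only one that is not a direct inequality, and it is where I expect the real content to lie: promoting ``firmly nonexpansive with full domain'' to ``maximally monotone.'' Firm nonexpansiveness supplies both monotonicity and (Lipschitz) continuity of the single-valued, everywhere-defined operator $S$, and I would close by invoking the standard fact that a monotone, everywhere-defined, continuous operator on a Hilbert space is maximally monotone—equivalently, that firmly nonexpansive operators with full domain are precisely the resolvents of maximally monotone operators. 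Thus the main obstacle is not the cocoercivity bookkeeping but this qualitative passage to maximality, which must be imported as an external structural result rather than derived from the scalar estimate.
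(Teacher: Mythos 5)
Your proof is correct and follows essentially the same route as the paper: the identical inner-product computation transported through $L$ and $L^*$ for item (i), specialization to $\beta=1$ for item (ii), and the same appeal to the standard structural fact that a firmly nonexpansive (hence continuous, monotone) operator defined on all of $\HH$ is maximally monotone for item (iii), which the paper imports as \cite[Example~20.30]{Livre1}. The only cosmetic difference is that your direct estimate $\|L^*u\|\leq\|L\|\,\|u\|$ with $\|L\|\leq 1$ treats $L=0$ uniformly in (ii), whereas the paper disposes of that degenerate case separately before invoking (i).
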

\begin{proof}
\ref{l:9i}:
Set $R=L^*\circ T\circ L$ and take $x_1$ and $x_2$ in 
$\dom R=L^{-1}(D)$. Then
\begin{align}
\scal{x_1-x_2}{Rx_1-Rx_2}
&=\scal{Lx_1-Lx_2}{T(Lx_1)-T(Lx_2)}\nonumber\\
&\geq\beta\|T(Lx_1)-T(Lx_2)\|^2\nonumber\\
&\geq\beta\|L\|^{-2}\|Rx_1-Rx_2\|^2.
\end{align}

\ref{l:9ii}: The firm nonexpansiveness is clear when $L=0$, and it
otherwise follows from \ref{l:9i} with $\beta=1$.

\ref{l:9iii}: This follows from \ref{l:9ii} and 
\cite[Example~20.30]{Livre1}.
\end{proof}

The following result, essentially due to Minty \cite{Mint62}, 
illuminates the interplay between nonexpansiveness and
monotonicity.

\begin{lemma}{\rm(\cite[Proposition~23.8]{Livre1})}
\label{l:0}
Let $D$ be a nonempty subset of $\HH$, let $T\colon D\to\HH$,
and set $A=T^{-1}-\Id_\HH$. Then the following hold:
\begin{enumerate}
\item
\label{l:0i}
$T=J_A$. 
\item 
\label{l:0ii}
$T$ is firmly nonexpansive if and only if $A$ is monotone.
\item 
\label{l:0iii}
$T$ is firmly nonexpansive and $D=\HH$ if and only if $A$ 
is maximally monotone. 
\end{enumerate}
\end{lemma}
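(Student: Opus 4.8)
The plan is to dispatch the three claims in order, with \ref{l:0i} and \ref{l:0ii} reducing to elementary manipulations and \ref{l:0iii} resting on Minty's surjectivity theorem. The single computation underpinning everything is the graph of $A$: since $T$ is single-valued, $\gra T^{-1}=\menge{(Tx,x)}{x\in D}$, and hence the definition $A=T^{-1}-\Id_\HH$ gives $\gra A=\menge{(Tx,x-Tx)}{x\in D}$.

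For \ref{l:0i}, I would add $\Id_\HH$ to $A$ so as to cancel the $-\Id_\HH$: on each graph point, $(A+\Id_\HH)(Tx)\ni(x-Tx)+Tx=x$, so $A+\Id_\HH=T^{-1}$ and therefore $J_A=(A+\Id_\HH)^{-1}=(T^{-1})^{-1}=T$. No analytic input is needed here; this is purely the algebra of set-valued inverses.

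For \ref{l:0ii}, the point is that both properties collapse to the same inequality. Expanding the square in the firm nonexpansiveness condition \eqref{e:f4}, namely $\|(\Id_\HH-T)x_1-(\Id_\HH-T)x_2\|^2=\|x_1-x_2\|^2-2\scal{x_1-x_2}{Tx_1-Tx_2}+\|Tx_1-Tx_2\|^2$, shows that \eqref{e:f4} holds for all $x_1,x_2\in D$ if and only if
\[
(\forall x_1\in D)(\forall x_2\in D)\quad
\|Tx_1-Tx_2\|^2\leq\scal{x_1-x_2}{Tx_1-Tx_2}.
\]
On the other hand, feeding the two generic graph points $(Tx_1,x_1-Tx_1)$ and $(Tx_2,x_2-Tx_2)$ of $\gra A$ into the monotonicity inequality \eqref{e:mon2} yields $\scal{Tx_1-Tx_2}{x_1-x_2}-\|Tx_1-Tx_2\|^2\geq 0$, which is the very same inequality. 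Since every pair in $\gra A$ is of this form, the two conditions are equivalent.

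For \ref{l:0iii}, I would combine \ref{l:0ii} with the observation that $\ran(\Id_\HH+A)=\ran T^{-1}=\dom T=D$. If $T$ is firmly nonexpansive with $D=\HH$, then $A$ is monotone by \ref{l:0ii} and $\ran(\Id_\HH+A)=\HH$, so Minty's theorem (a monotone operator whose resolvent is everywhere defined is maximally monotone) yields maximal monotonicity. Conversely, if $A$ is maximally monotone, then it is in particular monotone, so $T$ is firmly nonexpansive by \ref{l:0ii}, and Minty's theorem in the other direction forces $\ran(\Id_\HH+A)=\HH$, i.e.\ $D=\HH$. The substantive step, and the only one that is not bookkeeping, is Minty's surjectivity characterization itself; everything else is the algebra of \ref{l:0i} and the single inequality of \ref{l:0ii}. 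I would therefore invoke Minty's theorem rather than reprove it, since its proof, via a fixed-point argument on the firmly nonexpansive resolvent, is a separate development.
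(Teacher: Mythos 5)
Your proof is correct and complete; the paper itself offers no argument for this lemma, simply citing \cite[Proposition~23.8]{Livre1}, and your route --- the graph identity $\gra A=\menge{(Tx,x-Tx)}{x\in D}$, the reduction of \eqref{e:f4} by expansion of the square to the single inequality $\|Tx_1-Tx_2\|^2\leq\scal{x_1-x_2}{Tx_1-Tx_2}$, which is also what \eqref{e:mon2} says on $\gra A$, and Minty's surjectivity theorem applied through $\ran(\Id_\HH+A)=\ran T^{-1}=\dom T=D$ --- is exactly the standard argument of the cited reference. Outsourcing Minty's theorem rather than reproving it is the right call here, since the lemma is, as the paper itself notes, ``essentially due to Minty.''
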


\begin{lemma}
\label{l:1}
Let $A\colon\HH\to 2^{\HH}$. Then the following hold:
\begin{enumerate}
\item
\label{l:1i-}
Let $\gamma\in\RPP$. Then 
$\yosi{A}{\gamma}=(\gamma\Id_\HH+A^{-1})^{-1}
=(J_{\gamma^{-1}A^{-1}})\circ\gamma^{-1}\Id_\HH$.
\item
\label{l:1i}
$\Id_\HH\infconv A=J_{A^{-1}}=\Id_\HH-J_A$.
\item
\label{l:1ii}
$\zer A=\Fix J_A$.
\item
\label{l:1iii}
$(A-\Id_{\HH})^{-1}=(\Id_\HH-A^{-1})^{-1}-\Id_\HH$.
\item
\label{l:1iv}
Suppose that $A$ is monotone and let $\alpha\in\RPP$. 
Then $A$ is $\alpha$-strongly monotone if and only if
$J_A$ is $(\alpha+1)$-cocoercive. 
\end{enumerate}
\end{lemma}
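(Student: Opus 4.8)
The plan is to treat parts \ref{l:1i-}--\ref{l:1iii} as purely formal identities between set-valued operators and prove each by \emph{graph chasing}, i.e.\ by repeatedly invoking the defining equivalences $(x,u)\in\gra C^{-1}\Leftrightarrow(u,x)\in\gra C$ and $(x,u)\in\gra J_A\Leftrightarrow x-u\in Au$, together with the definition \eqref{e:parasum} of the parallel sum, until the asserted equality collapses to a tautology. The single engine behind \ref{l:1i} is the chain of equivalences $u=J_{A^{-1}}x\Leftrightarrow x-u\in A^{-1}u\Leftrightarrow u\in A(x-u)$ and $u=(\Id_\HH-J_A)x\Leftrightarrow x-u=J_Ax\Leftrightarrow u\in A(x-u)$: both expressions are characterized by $u\in A(x-u)$, hence coincide, and $\Id_\HH\infconv A=(\Id_\HH+A^{-1})^{-1}=J_{A^{-1}}$ is immediate from \eqref{e:parasum} since $\Id_\HH^{-1}=\Id_\HH$.

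For \ref{l:1i-} the same method shows that $(x,u)\in\gra\yosi{A}{\gamma}$, that is $x-\gamma u=J_{\gamma A}x$, is equivalent to $u\in A(x-\gamma u)$; unfolding $(\gamma\Id_\HH+A^{-1})^{-1}$ (namely $x-\gamma u\in A^{-1}u$) produces the identical condition, and the third expression follows from the scaling rule $(\gamma C)^{-1}=C^{-1}\circ\gamma^{-1}\Id_\HH$ applied to $C=\Id_\HH+\gamma^{-1}A^{-1}$, for which $C^{-1}=J_{\gamma^{-1}A^{-1}}$. Part \ref{l:1ii} is the quickest: $x\in\Fix J_A\Leftrightarrow x\in(A+\Id_\HH)^{-1}x\Leftrightarrow x\in(A+\Id_\HH)x\Leftrightarrow 0\in Ax$. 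For \ref{l:1iii} one checks that $(x,u)\in\gra(A-\Id_\HH)^{-1}\Leftrightarrow x+u\in Au\Leftrightarrow u\in A^{-1}(x+u)$, while $u\in\big((\Id_\HH-A^{-1})^{-1}-\Id_\HH\big)x\Leftrightarrow x\in(\Id_\HH-A^{-1})(x+u)\Leftrightarrow u\in A^{-1}(x+u)$, so the two graphs agree.

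Part \ref{l:1iv} carries the actual content and is where I expect the only real care to be needed. First I would record that monotonicity of $A$ forces $A+\Id_\HH$ to be strictly monotone, hence $J_A$ single-valued on $\dom J_A=\ran(A+\Id_\HH)$, so that the cocoercivity inequality $\scal{y_1-y_2}{J_Ay_1-J_Ay_2}\geq(\alpha+1)\|J_Ay_1-J_Ay_2\|^2$ is meaningful. The key device is the bijection $\gra A\to\gra J_A\colon(x,x^*)\mapsto(x+x^*,x)$, whose inverse is $(y,x)\mapsto(x,y-x)$: every $y\in\dom J_A$ is uniquely $y=x+x^*$ with $(x,x^*)\in\gra A$, $x=J_Ay$, and $x^*=y-J_Ay$. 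Writing $y_i=x_i+x_i^*$, I would then compute $\scal{y_1-y_2}{J_Ay_1-J_Ay_2}=\|x_1-x_2\|^2+\scal{x_1-x_2}{x_1^*-x_2^*}$ and $\|J_Ay_1-J_Ay_2\|^2=\|x_1-x_2\|^2$, so the cocoercivity inequality becomes, after subtracting $\|x_1-x_2\|^2$ from both sides, exactly $\scal{x_1-x_2}{x_1^*-x_2^*}\geq\alpha\|x_1-x_2\|^2$, which is $\alpha$-strong monotonicity of $A$. Because the displayed map is a bijection between the two graphs, the two universally quantified inequalities are equivalent, establishing \ref{l:1iv}.

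The main obstacle is thus confined to \ref{l:1iv}: identifying the correct change of variables $y=x+x^*$ and verifying that the translation between the cocoercivity and strong-monotonicity inequalities is an exact equivalence in both directions (which the bijectivity of the graph map guarantees); parts \ref{l:1i-}--\ref{l:1iii} are mechanical once one commits to chasing memberships through the definitions of inverse, resolvent, and parallel sum.
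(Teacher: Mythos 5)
Your proof is correct, and it takes a more self-contained route than the paper's. The paper dispatches \ref{l:1i-} and \ref{l:1iv} by citation---to \cite[Proposition~23.7(ii)]{Livre1} and \cite[Proposition~23.13]{Livre1}, respectively---obtains \ref{l:1i} as the instance $\gamma=1$ of \ref{l:1i-}, proves \ref{l:1ii} exactly as you do, and derives \ref{l:1iii} not by direct graph chasing but by a chain of resolvent identities: from \ref{l:1i}, $A^{-1}=J_{A-\Id_\HH}=\Id_\HH-J_{(A-\Id_\HH)^{-1}}=\Id_\HH-(\Id_\HH+(A-\Id_\HH)^{-1})^{-1}$, whence $(\Id_\HH-A^{-1})^{-1}=\Id_\HH+(A-\Id_\HH)^{-1}$. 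You instead verify all the formal identities by membership chasing---your computations check, including the scaling rule $(\gamma C)^{-1}=C^{-1}\circ\gamma^{-1}\Id_\HH$ applied to $C=\Id_\HH+\gamma^{-1}A^{-1}$, which cleanly yields the third expression in \ref{l:1i-}; each item collapses to a common condition such as $u\in A(x-u)$, $u\in A(x-\gamma u)$, or $u\in A^{-1}(x+u)$---and for \ref{l:1iv} you reprove the cited fact via the bijection $\gra A\to\gra J_A\colon(x,x^*)\mapsto(x+x^*,x)$, the same Minty-type device underlying the paper's Lemma~\ref{l:0}; under it the cocoercivity inequality transforms exactly, in both directions, into $\scal{x_1-x_2}{x_1^*-x_2^*}\geq\alpha\|x_1-x_2\|^2$, and your preliminary observation that monotonicity of $A$ makes $J_A$ single-valued on $\ran(A+\Id_\HH)$ correctly legitimizes the functional notation. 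What your route buys is a complete argument valid for arbitrary set-valued $A$ with no external dependencies; what the paper buys is brevity by outsourcing the two substantive items to the reference. Two small points to tighten: expressions such as $u=J_{A^{-1}}x$ in \ref{l:1i} should be read as graph memberships, since resolvents of nonmonotone operators need not be single-valued (your own graph-chasing framing already covers this); and in \ref{l:1iv} you use $\scal{y_1-y_2}{J_Ay_1-J_Ay_2}\geq(\alpha+1)\|J_Ay_1-J_Ay_2\|^2$ as the definition of $(\alpha+1)$-cocoercivity, whereas the paper defines $\beta$-cocoercivity as firm nonexpansiveness of $\beta T$ in the sense of \eqref{e:f4}; the two are equivalent by expanding the squares in \eqref{e:f4}, a one-line identity you should record to close the definitional gap.
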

\begin{proof}
\ref{l:1i-}: See \cite[Proposition~23.7(ii)]{Livre1}.

\ref{l:1i}: Apply \ref{l:1i-} with $\gamma=1$.

\ref{l:1ii}:
$\zer A=\menge{x\in\HH}{x\in (A+\Id_\HH)x}=
\menge{x\in\HH}{x\in(A+\Id_\HH)^{-1}x}$.

\ref{l:1iii}: By \ref{l:1i},
$A^{-1}=J_{A-\Id_\HH}=\Id_\HH-J_{(A-\Id_\HH)^{-1}}
=\Id_\HH-(\Id_\HH+(A-\Id_\HH)^{-1})^{-1}$.
So $(\Id_\HH-A^{-1})^{-1}=\Id_\HH+(A-\Id_\HH)^{-1}$ as claimed.

\ref{l:1iv}: See \cite[Proposition~23.13]{Livre1}.
\end{proof}

\begin{lemma}{\rm(\cite[Theorem~2.1]{Alim14})}
\label{l:2}
Let $A\colon\HH\to 2^{\HH}$ be a maximally monotone operator and
let $B\colon\HH\to 2^{\HH}$ be a monotone operator such that 
$\dom B=\HH$ and $A-B$ is monotone. Then $A-B$ is maximally
monotone.
\end{lemma}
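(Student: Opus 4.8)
The plan is to establish maximality directly from the characterization \eqref{e:maxmon2}, taking advantage of the fact that $A-B$ is already assumed monotone. The forward implication in \eqref{e:maxmon2} is exactly the hypothesized monotonicity \eqref{e:mon2} of $A-B$, so the only thing left to prove is the nontrivial implication: every pair $(x_0,u_0)\in\HH\times\HH$ that is monotonically related to $\gra(A-B)$, i.e.,
\begin{equation}
\label{e:plan-star}
\big(\forall (x,u)\in\gra(A-B)\big)\quad\scal{x_0-x}{u_0-u}\geq 0,
\end{equation}
must satisfy $(x_0,u_0)\in\gra(A-B)$. Since $\dom B=\HH$, I can fix some $b_0\in Bx_0$, and the strategy is to show that the shifted pair $(x_0,u_0+b_0)$ is monotonically related to $\gra A$. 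Maximal monotonicity of $A$ will then force $u_0+b_0\in Ax_0$, whence $u_0=(u_0+b_0)-b_0\in Ax_0-Bx_0\subseteq(A-B)x_0$, which is the desired conclusion.

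The crux is therefore to verify that $(x_0,u_0+b_0)$ is monotonically related to $\gra A$. I would fix an arbitrary $(x,a)\in\gra A$; because $\dom B=\HH$ there exists $b\in Bx$, and then $a-b\in(A-B)x$, so $(x,a-b)\in\gra(A-B)$. Feeding this point into \eqref{e:plan-star} gives $\scal{x_0-x}{u_0-a+b}\geq 0$, while the monotonicity \eqref{e:mon2} of $B$ applied to $(x_0,b_0)$ and $(x,b)$ gives $\scal{x_0-x}{b_0-b}\geq 0$. Adding these two inequalities makes the selection $b$ cancel and leaves $\scal{x_0-x}{u_0+b_0-a}\geq 0$, an inequality that no longer depends on $b$ and hence holds for every $(x,a)\in\gra A$. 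This is precisely the assertion that $(x_0,u_0+b_0)$ is monotonically related to $\gra A$, so the maximality of $A$ (the reverse implication in \eqref{e:maxmon2}) yields $u_0+b_0\in Ax_0$ and closes the argument.

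The main obstacle is the bookkeeping forced by the set-valuedness of $B$: for each $x$ one must introduce an auxiliary selection $b\in Bx$, and the whole scheme works only because that selection cancels upon adding the two inequalities, leaving a bound that depends solely on the fixed $b_0\in Bx_0$. Thus the hypothesis $\dom B=\HH$ plays a double role, guaranteeing both that the selections $b_0\in Bx_0$ and $b\in Bx$ exist and that every $(x,a)\in\gra A$ furnishes a companion point $(x,a-b)\in\gra(A-B)$ to test in \eqref{e:plan-star}; note also that the monotonicity of $A-B$ is needed only for the forward implication of \eqref{e:maxmon2}, the maximality step resting entirely on the maximality of $A$ and the monotonicity of $B$. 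An alternative would be to invoke Minty's surjectivity criterion (cf. Lemma~\ref{l:0}\ref{l:0iii}) and solve $z\in x+Ax-Bx$ as a fixed point of $x\mapsto J_A(z+Bx)$, but this reintroduces the set-valued selection without the clean cancellation above, so I would favor the direct maximality argument.
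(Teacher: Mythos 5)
Your proof is correct. The paper does not actually prove Lemma~\ref{l:2}; it imports it wholesale from \cite[Theorem~2.1]{Alim14}, so there is no internal argument to compare against, and your derivation---fix $b_0\in Bx_0$, test an arbitrary $(x,a)\in\gra A$ against \eqref{e:plan-star} via the companion point $(x,a-b)\in\gra(A-B)$ with $b\in Bx$, add the monotonicity inequality of $B$ so that $b$ cancels, and invoke the maximality of $A$ through \eqref{e:maxmon2} to get $u_0+b_0\in Ax_0$---is precisely the standard proof of the cited result. Your bookkeeping is also accurate on both counts: $\dom B=\HH$ is what furnishes the selections $b_0\in Bx_0$ (for an arbitrary test point $x_0$, not a priori in $\dom A$) and $b\in Bx$, while the hypothesis that $A-B$ is monotone is consumed only by the forward implication of \eqref{e:maxmon2}, the reverse implication being exactly what you established.
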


\begin{lemma}{\rm(\cite[Theorem~5]{Penn01})}
\label{l:3}
Let $L\in\BL(\HH,\GG)$ and let $B\colon\GG\to 2^{\GG}$ be
$3^*$~monotone, that is,
\begin{equation}
\label{e:3*mon}
\big(\forall (y_1,y_1^*)\in\dom B\times\ran B\big)
\;\;\text{\rm sup}
\menge{\scal{y_1-y_2}{y_2^*-y_1^*}}{(y_2,y_2^*)\in\gra B}<\pinf.
\end{equation}
Suppose that $L^*\circ B\circ L$ is
maximally monotone. Then the following hold:
\begin{enumerate}
\item
\label{l:3i}
$\inte L^*(\ran B)\subset\ran(L^*\circ B\circ L)$.
\item
\label{l:3ii}
$L^*(\ran B)\subset\cran(L^*\circ B\circ L)$.
\end{enumerate}
\end{lemma}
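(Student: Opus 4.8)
The result is a Brezis--Haraux-type range identity, and the route I would take is to prove \ref{l:3ii} by an approximation argument and then to read off \ref{l:3i} from it. Throughout, write $R=L^*\circ B\circ L$. The elementary inclusion $\ran R=L^*(B(\ran L))\subseteq L^*(\ran B)$ is immediate, so the entire difficulty lies in the reverse inclusion, recovered only after taking interiors (for \ref{l:3i}) or closures (for \ref{l:3ii}); this is precisely the phenomenon that $3^*$~monotonicity is designed to produce.

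For \ref{l:3ii}, I would fix $p\in L^*(\ran B)$, say $p=L^*q$ with $q\in Bw$, and regularize: since $R$ is maximally monotone, $R+\lambda\Id_\HH$ is strongly monotone and hence surjective for every $\lambda\in\RPP$, so I may set $x_\lambda=(R+\lambda\Id_\HH)^{-1}p$ and obtain $y_\lambda^*\in B(Lx_\lambda)$ with $L^*y_\lambda^*=p-\lambda x_\lambda$. Then $p-\lambda x_\lambda\in\ran R$, and the goal reduces to showing $\lambda x_\lambda\to 0$ as $\lambda\downarrow 0$, for which it suffices to bound $\lambda\|x_\lambda\|^2$ uniformly (then $\|\lambda x_\lambda\|^2=\lambda(\lambda\|x_\lambda\|^2)\to0$). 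This is where I would spend the effort. Because $R$ is maximally monotone, $\dom R\neq\emp$; I fix $x_0\in\dom R$ and set the anchor $a=Lx_0\in\dom B$. Using that $B$ is $3^*$~monotone, $C=\sup_{(y,y^*)\in\gra B}\scal{a-y}{y^*-q}<\pinf$, and evaluating at $(Lx_\lambda,y_\lambda^*)\in\gra B$ gives $\scal{a}{y_\lambda^*}-\scal{Lx_\lambda}{y_\lambda^*}+\scal{Lx_\lambda}{q}\leq C+\scal{a}{q}$. Since $\lambda\|x_\lambda\|^2=\scal{p-L^*y_\lambda^*}{x_\lambda}=\scal{Lx_\lambda}{q}-\scal{Lx_\lambda}{y_\lambda^*}$, this bound reads $\lambda\|x_\lambda\|^2\leq C+\scal{a}{q}-\scal{a}{y_\lambda^*}$.

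The decisive point is the choice of $a$ in the range of $L$: it turns the uncontrolled term $\scal{a}{y_\lambda^*}$ into $\scal{x_0}{L^*y_\lambda^*}=\scal{x_0}{p-\lambda x_\lambda}$, which is governed by $\lambda x_\lambda$ itself. Substituting and absorbing the resulting $\lambda\scal{x_0}{x_\lambda}$ by Young's inequality yields $\lambda\|x_\lambda\|^2\leq 2(C+\scal{a}{q}-\scal{x_0}{p})+\lambda\|x_0\|^2$, bounded for $\lambda\leq1$. Hence $\lambda x_\lambda\to0$, so $p=\lim_\lambda(p-\lambda x_\lambda)\in\cran R$, which proves \ref{l:3ii}. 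Finally, \ref{l:3i} follows formally: \ref{l:3ii} gives $\inte L^*(\ran B)\subseteq\inte\cran R$, and for any maximally monotone operator one has $\inte\cran R=\inte\ran R\subseteq\ran R$ (the nontrivial equality being the interiority property of $R^{-1}$, i.e.\ $\inte\cdom R^{-1}=\inte\dom R^{-1}$), whence $\inte L^*(\ran B)\subseteq\ran R$.

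I expect the a priori estimate to be the only real obstacle: monotonicity alone cannot prevent $x_\lambda$ from escaping to infinity, and the single $3^*$~constant $C$ dominates $\lambda\|x_\lambda\|^2$ only after the $\ran L$-anchor has been used to tie $\scal{a}{y_\lambda^*}$ back to $p-\lambda x_\lambda$; without that device the argument stalls on an uncontrolled linear term. It is worth noting that the computation uses only the supremum condition defining $3^*$~monotonicity of $B$ together with the maximal monotonicity of $R$, so no separate monotonicity hypothesis on $B$ enters.
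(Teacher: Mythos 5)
Your proof is correct: the elliptic regularization $p\in Rx_\lambda+\lambda x_\lambda$, the $3^*$ estimate anchored at $(Lx_0,q)\in\dom B\times\ran B$ with $x_0\in\dom R$ --- the decisive device being that the anchor lies in $\ran L$, so $\scal{Lx_0}{y_\lambda^*}=\scal{x_0}{p-\lambda x_\lambda}$ is absorbed by Young's inequality, giving $\sup_{0<\lambda\leq 1}\lambda\|x_\lambda\|^2<\pinf$ and hence $p\in\cran R$ --- and the passage from \ref{l:3ii} to \ref{l:3i} via $\inte\cran R=\inte\ran R$ (Rockafellar's interiority theorem, e.g.\ \cite[Corollary~21.24]{Livre1}, applied to the maximally monotone $R^{-1}$) all check out. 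The paper gives no proof of this lemma, quoting it from \cite[Theorem~5]{Penn01}, and your argument is essentially the Br\'ezis--Haraux-type perturbation scheme underlying that cited result, so the approaches coincide.
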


\begin{lemma}
\label{l:7}
Let $f\in\Gamma_0(\HH)$. Then the following hold:
\begin{enumerate}
\item
\label{l:7i}
{\rm\cite[Theorem~9.20]{Livre1}}
$f$ admits a continuous affine minorant.
\item
\label{l:7ii}
{\rm\cite[Corollary~13.38]{Livre1}}
$f^*\in\Gamma_0(\HH)$ and $f^{**}=f$.
\end{enumerate}
\end{lemma}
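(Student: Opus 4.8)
The plan is to derive both assertions from the Hahn--Banach separation theorem applied to the epigraph $\epi f=\menge{(x,\xi)\in\HH\times\RR}{f(x)\leq\xi}$, which is a nonempty (because $f$ is proper), closed (because $f$ is lower semicontinuous), and convex (because $f$ is convex) subset of the Hilbert space $\HH\oplus\RR$.

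For \ref{l:7i}, I would fix $x_0\in\dom f$ and observe that $(x_0,f(x_0)-1)$ lies outside the closed convex set $\epi f$. Strict separation then furnishes a nonzero continuous linear functional $(x,\xi)\mapsto\scal{x}{u}+\mu\xi$ on $\HH\oplus\RR$ and a scalar $\alpha$ with $\scal{x_0}{u}+\mu(f(x_0)-1)<\alpha\leq\scal{x}{u}+\mu\xi$ for every $(x,\xi)\in\epi f$. Since $\epi f$ is stable under increasing the $\xi$-coordinate, the functional must be bounded below along that direction, which forces $\mu\geq 0$; and $\mu=0$ is impossible because it would yield $\scal{x_0}{u}<\alpha\leq\scal{x_0}{u}$. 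Hence $\mu>0$, and dividing the separation inequality by $\mu$ and specializing to $\xi=f(x)$ produces the continuous affine minorant $f\geq\scal{\cdot}{-u/\mu}+\alpha/\mu$.

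For \ref{l:7ii}, the membership $f^*\in\Gamma_0(\HH)$ is the easy part: $f^*$ is convex and lower semicontinuous as a pointwise supremum of the continuous affine functions $x^*\mapsto\scal{x}{x^*}-f(x)$ indexed by $x\in\dom f$; it is proper because the minorant of \ref{l:7i} puts a point in $\dom f^*$ while properness of $f$ keeps $f^*$ above $\minf$. The Fenchel--Young inequality $f(x)+f^*(x^*)\geq\scal{x}{x^*}$ gives $f^{**}\leq f$ at once. For the reverse inequality I would again separate: given $x_0\in\HH$ and any $\xi_0<f(x_0)$, the point $(x_0,\xi_0)\notin\epi f$ is strictly separated, producing $(u,\mu)$ with $\mu\geq 0$ as above and $\scal{x_0}{u}+\mu\xi_0<\alpha\leq\scal{x}{u}+\mu f(x)$ for $x\in\dom f$. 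When $\mu>0$, normalizing shows $f^*(-u/\mu)\leq-\alpha/\mu$, whence $f^{**}(x_0)\geq\scal{x_0}{-u/\mu}+\alpha/\mu>\xi_0$; letting $\xi_0\uparrow f(x_0)$ yields $f^{**}(x_0)\geq f(x_0)$, and equality follows.

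The step I expect to be the genuine obstacle is the degenerate case $\mu=0$, which can occur precisely when $x_0\notin\dom f$. There the separating functional only gives $\scal{x}{u}\geq\alpha>\scal{x_0}{u}$ on $\dom f$ and carries no information about heights, so it cannot by itself bound $f^*$. The remedy is to tilt it by the affine minorant $a=\scal{\cdot}{v}+\beta\leq f$ supplied by \ref{l:7i}: for every $\rho\geq 0$ the function $\scal{\cdot}{v-\rho u}+(\beta+\rho\alpha)$ is still a minorant of $f$ (it is $\leq a$ on $\dom f$ since $\alpha-\scal{x}{u}\leq 0$ there, and $f=\pinf$ elsewhere), so $f^*(v-\rho u)\leq-(\beta+\rho\alpha)$ and hence $f^{**}(x_0)\geq\scal{x_0}{v}+\beta+\rho(\alpha-\scal{x_0}{u})$. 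Since $\alpha-\scal{x_0}{u}>0$, letting $\rho\to\pinf$ forces $f^{**}(x_0)=\pinf=f(x_0)$, completing the identity $f^{**}=f$.
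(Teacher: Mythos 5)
Your proof is correct and complete. Note that the paper offers no argument of its own for this lemma: both items are quoted directly from the cited reference {\rm[Livre1, Theorem~9.20 and Corollary~13.38]}, so there is no internal proof to compare against. What you have written is the classical Fenchel--Moreau argument --- strict separation of the closed convex epigraph from a point of $\HH\oplus\RR$, the observation that upward stability of $\epi f$ forces $\mu\geq 0$, and biconjugation --- which is essentially the route the cited textbook takes. In particular, you correctly isolated and resolved the one genuinely delicate point: for $x_0\notin\dom f$ the separating functional may be horizontal ($\mu=0$) and carries no height information, and your remedy of tilting it by the affine minorant from \ref{l:7i}, using that $\alpha-\scal{x}{u}\leq 0$ on $\dom f$ while $\alpha-\scal{x_0}{u}>0$, so that $\rho\to\pinf$ drives $f^{**}(x_0)$ to $\pinf$, is exactly the standard fix. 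The remaining steps (properness of $f^*$ via the minorant of \ref{l:7i} and a point of $\dom f$, lower semicontinuity and convexity of $f^*$ as a supremum of continuous affine functions, and $f^{**}\leq f$ from Fenchel--Young) are all sound.
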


\begin{lemma}{\rm\cite[Theorem~18.15]{Livre1}}
\label{l:8}
Let $f\colon\HH\to\RR$ be continuous and convex, and 
let $\beta\in\RPP$. Then the following are equivalent:
\begin{enumerate}
\item
\label{l:8i}
$f$ is Fr\'echet differentiable on $\HH$ and $\nabla f$ is
$\beta$-Lipschitz continuous.
\item
\label{l:8ii}
$f^*$ is $\beta^{-1}$-strongly convex.
\end{enumerate}
\end{lemma}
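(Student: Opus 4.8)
The plan is to transfer both conditions to the level of the operator $\partial f$ and its inverse $(\partial f)^{-1}=\partial f^*$ (see \eqref{e:i}), and to exploit two elementary facts. First, for a single-valued $T$ the inner-product form of cocoercivity shows that $T$ is $\beta^{-1}$-cocoercive if and only if $T^{-1}$ is $\beta^{-1}$-strongly monotone (invert graphs and compare with \eqref{e:mon2}). Second, a function $h\in\Gamma_0(\HH)$ is $\beta^{-1}$-strongly convex if and only if $\partial h$ is $\beta^{-1}$-strongly monotone: the forward implication follows by adding the two strong subgradient inequalities, and the reverse by integrating the strong-monotonicity inequality along the segment joining two points, the restriction of $h$ to that segment being convex and hence absolutely continuous on the interior of its domain. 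Applying the second fact to $h=f^*$ and then the first to $T=\partial f$ yields the pivotal equivalence that \ref{l:8ii} holds if and only if $\partial f$ is $\beta^{-1}$-cocoercive. The theorem then reduces to matching this cocoercivity against \ref{l:8i}.

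For \ref{l:8ii}$\Rightarrow$\ref{l:8i} I would favor a direct Moreau-envelope argument, which delivers Fr\'echet differentiability with no separate regularity discussion. Setting $c=f^*-\beta^{-1}\qq_{\HH}$, strong convexity gives $c\in\Gamma_0(\HH)$, whence $c^*\in\Gamma_0(\HH)$ by Lemma~\ref{l:7}\ref{l:7ii}. Conjugating the identity $f^*=c+\beta^{-1}\qq_{\HH}$, using $f^{**}=f$ and the conjugate-of-sum rule $(c+\beta^{-1}\qq_{\HH})^*=c^*\infconv(\beta^{-1}\qq_{\HH})^*=c^*\infconv(\beta\qq_{\HH})$, one identifies $f=\moyo{(c^*)}{\beta^{-1}}$ as the Moreau envelope of $c^*$ of index $\beta^{-1}$. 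Such an envelope is Fr\'echet differentiable on $\HH$ with $\nabla f=\yosi{(\partial c^*)}{\beta^{-1}}$, the Yosida approximation of index $\beta^{-1}$ (compare Lemma~\ref{l:1}\ref{l:1i-}), and this Yosida approximation is $\beta$-Lipschitz, which is precisely \ref{l:8i}.

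The reverse implication \ref{l:8i}$\Rightarrow$\ref{l:8ii} is where the real work lies, for by the first paragraph it amounts to upgrading ``$\nabla f$ is $\beta$-Lipschitz'' to ``$\nabla f$ is $\beta^{-1}$-cocoercive'': Lipschitz continuity together with monotonicity only yields the weaker two-sided bound $0\le\scal{x_1-x_2}{\nabla f(x_1)-\nabla f(x_2)}\le\beta\|x_1-x_2\|^2$, which does not control $\|\nabla f(x_1)-\nabla f(x_2)\|$. I would obtain cocoercivity (the Baillon--Haddad phenomenon) as follows: for fixed $y$, the convex function $x\mapsto f(x)-\scal{\nabla f(y)}{x}$ attains its minimum at $y$ and retains a $\beta$-Lipschitz gradient, so the descent inequality $f(x)\ge f(y)+\scal{\nabla f(y)}{x-y}+(2\beta)^{-1}\|\nabla f(x)-\nabla f(y)\|^2$ holds; symmetrizing in $x,y$ and adding produces $\scal{x-y}{\nabla f(x)-\nabla f(y)}\ge\beta^{-1}\|\nabla f(x)-\nabla f(y)\|^2$. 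Feeding this cocoercivity into the pivotal equivalence of the first paragraph gives the $\beta^{-1}$-strong convexity of $f^*$. The main obstacle is thus exactly this cocoercivity step: the descent inequality it relies on requires the fundamental-theorem-of-calculus estimate for $C^{1,1}$ functions, and one must use convexity precisely to locate the minimizer at $y$; by contrast, all the manipulations with \eqref{e:i}, conjugation, and graph inversion are routine.
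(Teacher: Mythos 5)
First, a point of reference: the paper does not prove Lemma~\ref{l:8} at all --- it is quoted verbatim from \cite[Theorem~18.15]{Livre1} --- so your attempt is measured against the standard literature rather than an in-paper argument. Most of your proposal is sound. Your fact that $T$ is $\beta^{-1}$-cocoercive if and only if $T^{-1}$ is $\beta^{-1}$-strongly monotone is a correct graph inversion; the implication \ref{l:8ii}$\Rightarrow$\ref{l:8i} via $f=f^{**}=(c+\beta^{-1}\qq_\HH)^*=\moyo{(c^*)}{\beta^{-1}}$ is correct and complete in outline (the conjugate-of-sum step is legitimate because $\beta^{-1}\qq_\HH$ is everywhere finite and continuous, and the Fr\'echet differentiability of the Moreau envelope of index $\beta^{-1}$ together with the $\beta$-Lipschitz continuity of the Yosida approximation $\yosi{(\partial c^*)}{\beta^{-1}}$ is classical); and your Baillon--Haddad upgrade of $\beta$-Lipschitz continuity of $\nabla f$ to $\beta^{-1}$-cocoercivity, via the minimization of $x\mapsto f(x)-\scal{\nabla f(y)}{x}$ at $y$ and the descent inequality, is exactly the standard correct argument.

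The genuine gap sits in the step you classified as routine: the reverse half of your ``second fact,'' namely that $\beta^{-1}$-strong monotonicity of $\partial f^*$ implies $\beta^{-1}$-strong convexity of $f^*$. Your integration argument requires, at almost every point $x_t$ of the segment joining two points of $\dom f^*$, a subgradient $u_t\in\partial f^*(x_t)$ to feed into the strong-monotonicity inequality. But $\dom\partial f^*=\ran\nabla f$ need not meet the open segment at all: $\dom f^*$ can have empty interior, $f^*$ can fail to be subdifferentiable along the segment, and the a.e.\ derivative of the convex one-dimensional restriction is not realized by elements of $\partial f^*$. Absolute continuity of the restriction is not the issue; availability of subgradients of $f^*$ itself is, and in infinite dimensions Br\o ndsted--Rockafellar density is too weak to place them on a prescribed line. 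Three repairs are available: (1) cite the equivalence between strong convexity and strong monotonicity of the subdifferential, as this paper itself does in Proposition~\ref{p:5} via \cite[Remark~3.5.3]{Zali02}; (2) regularize: $\yosi{(\partial f^*)}{\lambda}=\nabla\moyo{(f^*)}{\lambda}$ is everywhere defined and inherits strong monotonicity with constant $\beta^{-1}/(1+\lambda\beta^{-1})$, so your integration is valid for $\moyo{(f^*)}{\lambda}$, and letting $\lambda\downarrow 0$ preserves strong convexity in the pointwise limit; or (3) bypass the operator detour entirely: the descent inequality you already have shows that $g=\beta\qq_\HH-f$ admits an exact continuous affine minorant at every point, hence $g$ is convex and continuous with $g=g^{**}$, and a direct computation gives $f^*(u)-\beta^{-1}\qq_\HH(u)=\sup_{v\in\HH}\big(\beta^{-1}\scal{u}{v}+\beta^{-1}\qq_\HH(v)-g^*(v)\big)$, a supremum of continuous affine functions of $u$, hence convex. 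This last route is essentially the proof of \cite[Theorem~18.15]{Livre1}, and it is both shorter and safer than the segment integration.
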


\begin{lemma}{\rm(Moreau \cite{More65})}
\label{l:5}
Let $T\colon\HH\to\HH$ be nonexpansive. Then $T$ is a proximity
operator if and only if there exists a differentiable convex
function $h\colon\HH\to\RR$ such that $T=\nabla h$. In this case,
$T=\prox_{f}$, where $f=h^*-\qq_\HH$. 
\end{lemma}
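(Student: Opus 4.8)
The plan is to establish the two implications separately, using the function $h=\moyo{f^*}{1}=f^*\infconv\qq_{\HH}$ as the bridge in both directions. Throughout I would rely on two standard facts. The first is the Moreau-envelope gradient identity: for every $g\in\Gamma_0(\HH)$ the function $\moyo{g}{1}=g\infconv\qq_{\HH}$ is real-valued, convex, and Fr\'echet differentiable with $\nabla\,\moyo{g}{1}=\Id_{\HH}-\prox_g=\prox_{g^*}$ \cite{Livre1}. The second is the conjugation rule $(f+\qq_{\HH})^*=f^*\infconv\qq_{\HH}$, valid because $\qq_{\HH}$ is finite and continuous on $\HH$ and $\qq_{\HH}^*=\qq_{\HH}$.

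First I would treat the forward implication. Suppose $T=\prox_f$ for some $f\in\Gamma_0(\HH)$ and put $h=\moyo{f^*}{1}$. Applying the envelope identity to $g=f^*$ together with Lemma~\ref{l:7}\ref{l:7ii} shows that $h$ is convex and differentiable with $\nabla h=\prox_{f^{**}}=\prox_f=T$, which supplies the desired potential. Conjugating the relation $h=f^*\infconv\qq_{\HH}$ and using $f^{**}=f$ gives $h^*=f+\qq_{\HH}$, hence $f=h^*-\qq_{\HH}$, which confirms the closing identity of the statement.

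For the converse, suppose $T=\nabla h$ with $h$ convex and differentiable; since $h$ is then real-valued and (being differentiable) continuous, $h\in\Gamma_0(\HH)$. I would set $f=h^*-\qq_{\HH}$, and the key step is to verify that $f\in\Gamma_0(\HH)$. This is precisely where the nonexpansiveness of $T$ enters: as $\nabla h=T$ is $1$-Lipschitz, Lemma~\ref{l:8} with $\beta=1$ shows that $h^*$ is $1$-strongly convex, that is, that $h^*-\qq_{\HH}$ is convex; since $h^*\in\Gamma_0(\HH)$ by Lemma~\ref{l:7}\ref{l:7ii} and subtracting the finite continuous function $\qq_{\HH}$ preserves properness and lower semicontinuity, we obtain $f\in\Gamma_0(\HH)$. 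Conjugating the resulting identity $f+\qq_{\HH}=h^*$ and invoking $h^{**}=h$ yields $h=(f+\qq_{\HH})^*=f^*\infconv\qq_{\HH}=\moyo{f^*}{1}$; differentiating and using the envelope identity once more gives $T=\nabla h=\prox_{f^{**}}=\prox_f$. Thus $T$ is the proximity operator of $f=h^*-\qq_{\HH}$.

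The routine parts are the conjugate calculus and the envelope differentiation. The one substantive obstacle is the convexity of $f=h^*-\qq_{\HH}$ in the converse: this fails for a general differentiable convex $h$ and is secured only by combining the nonexpansiveness hypothesis with the Lipschitz/strong-convexity duality of Lemma~\ref{l:8}.
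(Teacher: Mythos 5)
Your proof is correct. There is, however, no in-paper argument to compare it against: the paper states Lemma~\ref{l:5} as a classical result of Moreau and dispatches it by citation to \cite{More65}. What you have produced is the standard modern derivation, and it is assembled precisely from the toolkit the paper itself records. In the forward direction, setting $h=f^*\infconv\qq_\HH$ and using Lemma~\ref{l:4}\ref{l:4i} (i.e., $\prox_f=\nabla(f^*\infconv\qq_\HH)$) together with the conjugation rule of Lemma~\ref{l:4}\ref{l:4i+} gives $\nabla h=T$ and $h^*=f+\qq_\HH$, hence $f=h^*-\qq_\HH$, as you say. In the converse, you correctly isolate the one substantive point, namely that the convexity of $f=h^*-\qq_\HH$ is exactly where the nonexpansiveness of $T$ enters, via the Lipschitz/strong-convexity duality of Lemma~\ref{l:8} with $\beta=1$; the remaining steps ($h^*\in\Gamma_0(\HH)$ by Lemma~\ref{l:7}\ref{l:7ii}, lower semicontinuity and properness preserved under subtraction of the finite continuous $\qq_\HH$, then $h=(f+\qq_\HH)^*=f^*\infconv\qq_\HH$ and $\nabla h=\prox_f$) are all sound. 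One hairline remark: to invoke Lemma~\ref{l:8} you need $h$ continuous, and your parenthetical ``differentiable, hence continuous'' is valid when differentiability is understood in the Fr\'echet sense; if one reads ``differentiable'' in the G\^ateaux sense, continuity (and indeed Fr\'echet differentiability) instead follows from the hypothesis that $\nabla h=T$ is Lipschitz, so the argument survives either reading. In short: a complete and correct proof of a result the paper leaves to the literature, built from the paper's own Lemmas~\ref{l:4}, \ref{l:7}, and \ref{l:8}.
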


\begin{lemma}{\rm(Moreau \cite{More65})}
\label{l:4}
Let $f\in\Gamma_0(\HH)$. Then the following hold:
\begin{enumerate}
\item
\label{l:4i--}
$\partial f$ is maximally monotone.
\item
\label{l:4i-}
$f\infconv\qq_\HH\colon\HH\to\RR$ is convex and Fr\'echet
differentiable.
\item
\label{l:4i+}
$(f\infconv\qq_\HH)^*=f^*+\qq_\HH$ and 
$(f+\qq_\HH)^*=f^*\infconv\qq_\HH$.
\item
\label{l:4i}
$\prox_f=\nabla(f^*\infconv\qq_\HH)$.
\item
\label{l:4i++}
$\prox_f$ is firmly nonexpansive. 
\item
\label{l:4ii}
$f\infconv\qq_\HH+f^*\infconv\qq_\HH=\qq_\HH$.
\item
\label{l:4iii}
$\prox_f+\prox_{f^*}=\Id_\HH$.
\item
\label{l:4vi}
$\partial(f+\qq_{\HH})=\partial f+\Id_{\HH}$.
\end{enumerate}
\end{lemma}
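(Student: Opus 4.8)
The plan is to establish the eight assertions in one cascade, driving everything from the conjugacy calculus together with the self-duality $\qq_\HH^*=\qq_\HH$, and postponing the monotonicity statements to the end, where Minty's parametrization (Lemma~\ref{l:0}) can be applied. First I would prove \ref{l:4i+}: the general identity $(h_1\infconv h_2)^*=h_1^*+h_2^*$ applied with $h_2=\qq_\HH$ gives $(f\infconv\qq_\HH)^*=f^*+\qq_\HH$; substituting $f^*$ for $f$ (legitimate since $f^*\in\Gamma_0(\HH)$ by Lemma~\ref{l:7}\ref{l:7ii}) and using $f^{**}=f$ yields $(f^*\infconv\qq_\HH)^*=f+\qq_\HH$, whereupon one further conjugation, together with the fact that $f^*\infconv\qq_\HH\in\Gamma_0(\HH)$, produces the second identity $(f+\qq_\HH)^*=f^*\infconv\qq_\HH$.

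I would then turn to \ref{l:4i-}. The point to certify first is that $f\infconv\qq_\HH$ is real-valued: it is bounded below because $f$ has a continuous affine minorant (Lemma~\ref{l:7}\ref{l:7i}) and bounded above by $f(x_0)+\qq_\HH(\cdot-x_0)$ for any $x_0\in\dom f$; being finite, convex, and lower semicontinuous on all of $\HH$, it is continuous. Since its conjugate $f^*+\qq_\HH$ is $1$-strongly convex, Lemma~\ref{l:8} (with $\beta=1$) shows $f\infconv\qq_\HH$ is Fr\'echet differentiable with $1$-Lipschitz gradient, and the identical argument applies to $f^*\infconv\qq_\HH$. For the sum rule \ref{l:4vi}, the inclusion $\partial f+\Id_\HH\subset\partial(f+\qq_\HH)$ is immediate from adding the subgradient inequality for $f$ to the exact expansion of $\qq_\HH$, while the reverse inclusion is the Moreau--Rockafellar sum rule, available precisely because $\qq_\HH$ is finite and continuous.

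Next comes \ref{l:4i}. By \ref{l:4i+} we have $f^*\infconv\qq_\HH=(f+\qq_\HH)^*$, so \eqref{e:i} gives that its subdifferential is $(\partial(f+\qq_\HH))^{-1}$, which by \ref{l:4vi} equals $(\Id_\HH+\partial f)^{-1}=J_{\partial f}=\prox_f$ via \eqref{e:dprox}; since $f^*\infconv\qq_\HH$ is differentiable by \ref{l:4i-}, this subdifferential is the singleton $\{\nabla(f^*\infconv\qq_\HH)\}$, whence $\prox_f=\nabla(f^*\infconv\qq_\HH)$. I would obtain the decomposition \ref{l:4iii} directly: for $x\in\HH$, $p=\prox_f x$ satisfies $x-p\in\partial f(p)$, hence $p\in\partial f^*(x-p)$ by \eqref{e:i}, i.e.\ $x-p=J_{\partial f^*}x=\prox_{f^*}x$, so $\prox_f+\prox_{f^*}=\Id_\HH$. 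Applying \ref{l:4i} to $f^*$ gives $\prox_{f^*}=\nabla(f\infconv\qq_\HH)$, so \ref{l:4iii} reads $\nabla(f\infconv\qq_\HH+f^*\infconv\qq_\HH)=\Id_\HH=\nabla\qq_\HH$; the two differentiable convex functions thus differ by a constant, which a Fenchel--Young evaluation based on \ref{l:4i+} pins at $0$, yielding \ref{l:4ii}.

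Finally the monotonicity claims. Monotonicity of $\partial f$ is immediate from \eqref{e:subdiff}, and $\prox_f=J_{\partial f}=(\Id_\HH+\partial f)^{-1}$ has full domain $\HH$ (guaranteed by \ref{l:4i}); setting $T=\prox_f$, so that $T^{-1}-\Id_\HH=\partial f$, Lemma~\ref{l:0}\ref{l:0ii} delivers \ref{l:4i++} and Lemma~\ref{l:0}\ref{l:0iii} delivers \ref{l:4i--}. The part I expect to be most delicate is \ref{l:4i-} together with \ref{l:4vi}: invoking Lemma~\ref{l:8} legitimately requires first certifying that the Moreau envelope is a genuine finite \emph{continuous} convex function rather than merely proper, and the reverse inclusion in the sum rule is exactly where the continuity of $\qq_\HH$ must be used; fixing the additive constant in \ref{l:4ii} is the other step that demands care.
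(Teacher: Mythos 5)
Your argument is correct, but there is no in-paper proof to compare it against: the paper states Lemma~\ref{l:4} as a quoted classical result of Moreau \cite{More65} (cf.\ also \cite{Livre1}) and proves nothing. What you have produced is therefore a self-contained reconstruction, and its architecture is sound and close in spirit to Moreau's original development: the unconditional conjugacy rule $(h_1\infconv h_2)^*=h_1^*+h_2^*$ together with $\qq_\HH^*=\qq_\HH$ gives \ref{l:4i+}; differentiability of the envelope in \ref{l:4i-} comes from $1$-strong convexity of the conjugate via Lemma~\ref{l:8}; items \ref{l:4i}, \ref{l:4iii}, and \ref{l:4ii} follow from subdifferential inversion \eqref{e:i}, the sum rule \ref{l:4vi}, and a Fenchel--Young equality evaluation (which indeed pins the constant: with $p=\prox_fx$ and $q=x-p\in\partial f(p)$ one gets $(f\infconv\qq_\HH)(x)+(f^*\infconv\qq_\HH)(x)=f(p)+f^*(q)+\qq_\HH(p)+\qq_\HH(q)=\scal{p}{q}+\qq_\HH(p)+\qq_\HH(q)=\qq_\HH(x)$). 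Deferring the monotonicity statements and extracting \ref{l:4i++} and \ref{l:4i--} from the full domain of $\prox_f$ through Minty's parametrization, Lemma~\ref{l:0}\ref{l:0ii}--\ref{l:0iii}, is an economical inversion of the usual textbook order, which proves maximal monotonicity of $\partial f$ first and derives the prox theory from it; your order gets maximality for free from \ref{l:4i}.

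Two small repairs are warranted. First, in \ref{l:4i-} you justify continuity of $f\infconv\qq_\HH$ by ``finite, convex, and lower semicontinuous,'' but lower semicontinuity of an infimal convolution is not automatic and you have not certified it at that point. The fix is already in your hands: the majorant $f(x_0)+\qq_\HH(\cdot-x_0)$ shows that the everywhere-finite convex function $f\infconv\qq_\HH$ is locally bounded above, which yields local Lipschitz continuity directly; lower semicontinuity then follows, and that is also what legitimizes the biconjugation $(f^*\infconv\qq_\HH)^{**}=f^*\infconv\qq_\HH$ in your proof of \ref{l:4i+}. Second, you invoke $f^*\infconv\qq_\HH\in\Gamma_0(\HH)$ inside \ref{l:4i+} before establishing \ref{l:4i-}; this is only an ordering issue, since the envelope argument applies verbatim to $f^*$, which lies in $\Gamma_0(\HH)$ by Lemma~\ref{l:7}\ref{l:7ii}, but the finiteness and continuity of the envelope should be put first in the cascade.
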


\section{Examples of resolvent compositions}
\label{sec:3}

We provide a few examples that expose various facets of the
resolvent composition. The first one describes a scenario in which
the compositions \eqref{e:c}, \eqref{e:1}, and \eqref{e:2} happen
to coincide.

\begin{example}
\label{ex:6}
Suppose that $L\in\BL(\HH,\GG)$ is a surjective isometry and 
let $B\colon\GG\to 2^{\GG}$. Then 
$\proxc{L}{B}=L^*\circ B\circ L=L^*\pushfwd B$.  
\end{example}
\begin{proof}
Since $L^{-1}=L^*$, \eqref{e:2} yields 
$\proxc{L}{B}
=L^*\pushfwd(B+\Id_{\GG})-\Id_{\HH}
=(L^{-1}\circ(B+\Id_{\GG})^{-1}\circ L)^{-1}-\Id_{\HH}
=L^{-1}\circ(B+\Id_{\GG})\circ L-\Id_{\HH}
=L^{-1}\circ B\circ L
=(L^{-1}\circ B^{-1}\circ L)^{-1}
=L^{-1}\pushfwd B
=L^*\pushfwd B$.
\end{proof}

\begin{example}
\label{ex:0}
Let $\alpha\in\RR\smallsetminus\{0\}$, let 
$B\colon\HH\to 2^{\HH}$, and set
$L=\alpha^{-1}\Id_\HH$. Then 
$\proxc{L}{B}=(\alpha^2-1)\Id_\HH+\alpha B\circ(\alpha\Id_\HH)$.
\end{example}

The broad potential of Definition~\ref{d:1} is illustrated below by
deploying it in product spaces.

\begin{example}[multivariate resolvent mixture]
\label{ex:90}
Let $0\neq m\in\NN$ and $0\neq p\in\NN$. For every
$i\in\{1,\ldots,m\}$ and every $k\in\{1,\ldots,p\}$, let $\HH_i$
and $\GG_k$ be real Hilbert spaces, let
$L_{ki}\in\BL(\HH_i,\GG_k)$, let $\omega_k\in\RPP$, and let
$B_k\colon\GG_k\to 2^{\GG_k}$. Let $\HH$ be the standard 
product vector space $\HH_1\times\cdots\times\HH_m$, with generic
element $\boldsymbol{x}=(x_i)_{1\leq i\leq m}$, and equipped with
the scalar product 
$(\boldsymbol{x},\boldsymbol{x}')\mapsto\sum_{i=1}^m
\scal{x_i}{x'_i}$. Let $\GG$ be the standard product vector space
$\GG_1\times\cdots\times\GG_p$, with generic element
$\boldsymbol{y}=(y_k)_{1\leq k\leq p}$, and equipped with the
scalar product $(\boldsymbol{y},\boldsymbol{y}')\mapsto
\sum_{k=1}^p\omega_k\scal{y_k}{y'_k}$. Set
\begin{equation}
\label{e:60}
L\colon\HH\to\GG\colon\boldsymbol{x}\mapsto\Bigg(
\sum_{i=1}^mL_{1i}x_i,\ldots,\sum_{i=1}^mL_{pi}x_i\Bigg)
\end{equation}
and
\begin{equation}
\label{e:61}
B\colon\GG\to 2^{\GG}\colon\boldsymbol{y}\mapsto
B_1y_1\times\cdots\times B_py_p.
\end{equation}
Then Proposition~\ref{p:0} yields\\
\begin{equation}
\label{e:74}
J_{\proxc{L}{B}}\colon\HH\to 2^{\HH}\colon\boldsymbol{x}\mapsto
\Bigg(\sum_{k=1}^p\omega_kL_{k1}^*
\bigg(J_{B_k}\bigg(\sum_{i=1}^mL_{ki}x_i\bigg)\bigg),\ldots,
\sum_{k=1}^p\omega_kL_{km}^*
\bigg(J_{B_k}\bigg(\sum_{i=1}^mL_{ki}x_i\bigg)\bigg)\Bigg)
\end{equation}
and we call $\proxc{L}{B}=(J_{\proxc{L}{B}})^{-1}-\Id_\HH$ a
\emph{multivariate resolvent mixture}.
\end{example}

When $m=1$ in Example~\ref{ex:90}, we obtain the following
construction.

\begin{example}[resolvent mixture]
\label{ex:5}
Let $0\neq p\in\NN$ and, for every 
$k\in\{1,\ldots,p\}$, let $\GG_k$ be a real Hilbert
space, let $L_{k}\in\BL(\HH,\GG_k)$, let $\omega_k\in\RPP$, and
let $B_k\colon\GG_k\to 2^{\GG_k}$. Define $\GG$ and $B$ as in
Example~\ref{ex:90}, and set
$L\colon\HH\to\GG\colon x\mapsto(L_1x,\ldots,L_px)$.
Then we obtain the \emph{resolvent mixture}
\begin{equation}
\label{e:62}
\proxc{L}{B}
=\Bigg(\sum_{k=1}^p\omega_kL_k^*\circ J_{B_k}\circ L_k\Bigg)^{-1}
-\Id_\HH
=\Bigg(\sum_{k=1}^p\omega_kJ_{\proxc{L_k}{B_k}}\Bigg)^{-1}
-\Id_\HH
\end{equation}
and $J_{\proxc{L}{B}}=\sum_{k=1}^p\omega_kL_{k}^*\circ
J_{B_k}\circ L_k$.
In particular, if, for every $k\in\{1,\ldots,p\}$, $\GG_k=\HH$ and
$L_k=\Id_{\HH}$, then \eqref{e:62} reduces to Example~\ref{ex:1},
which itself encompasses the resolvent average.
\end{example}

\begin{example}[linear projector]
\label{ex:9}
Let $V$ be a closed vector subspace of $\HH$ and let  
$B\colon\HH\to 2^{\HH}$. Then $\proxc{\proj_V}{B}=
(\proj_V\circ J_B\circ\proj_V)^{-1}-\Id_\HH$. Here are noteworthy
special cases of this construction:
\begin{enumerate}
\item
\label{ex:9i}
Let $C$ be a nonempty closed convex subset of $\HH$ and suppose
that $B=N_C$. Then $\proxc{\proj_V}{B}=
(\proj_V\circ\proj_C\circ\proj_V)^{-1}-\Id_\HH$. This operator was
employed in \cite{Sico20} to construct an instance of weak -- 
but not strong -- convergence of the Douglas-Rachford algorithm.
\item
\label{ex:9ii}
Define $\GG$, $(B_k)_{1\leq k\leq p}$, and $B$ as in 
Example~\ref{ex:1}, with $\sum_{k=1}^p\omega_k=1$. In addition,
define $V=\menge{\boldsymbol{y}\in\GG}{y_1=\cdots=y_p}$, let
$A=(\sum_{k=1}^p\omega_kJ_{B_k})^{-1}-\Id_\HH$ be the resolvent
average of $(B_k)_{1\leq k\leq p}$ (see \eqref{e:4}),
let $\boldsymbol{y}\in\GG$, and set 
$\overline{y}=\sum_{k=1}^p\omega_ky_k$. Then we derive from
Proposition~\ref{p:0} and 
\cite[Propositions~23.18 and 29.16]{Livre1} that
$J_{\proxc{\proj_V}{B}}\,\boldsymbol{y}
=(\sum_{k=1}^p\omega_kJ_{B_k}\overline{y},\ldots,
\sum_{k=1}^p\omega_kJ_{B_k}\overline{y})
=(J_A\overline{y},\ldots,J_A\overline{y})$.
In the case of convex feasibility problems, where each $B_k$ is the
normal cone to a nonempty closed convex set, this type of
construction was first proposed in \cite{Pier76,Pier84}.
\end{enumerate}
\end{example}

The next example places the subdifferential identity \eqref{e:17}
in a rigorous framework.

\begin{example}[subdifferential]
\label{ex:8}
Suppose that $L\in\BL(\HH,\GG)$ satisfies $0<\|L\|\leq 1$ and
let $g\colon\GG\to\RX$ be a proper function that admits a 
continuous affine minorant. Then the following hold:
\begin{enumerate}
\item
\label{ex:8i-}
$g^*\in\Gamma_0(\GG)$.
\item
\label{ex:8i}
$\proxc{L}{g}\in\Gamma_0(\HH)$.
\item
\label{ex:8ii-}
$\proxc{L}{\partial g^{**}}=\partial(\proxc{L}{g})$.
\item
\label{ex:8iii-}
$\prox_{\proxc{L}{g}}=L^*\circ\prox_{g^{**}}\circ L$.
\item
\label{ex:8ii}
Suppose that $g\in\Gamma_0(\GG)$. Then
$\proxc{L}{\partial g}=\partial(\proxc{L}{g})$.
\item
\label{ex:8iii}
Suppose that $g\in\Gamma_0(\GG)$. Then
$\prox_{\proxc{L}{g}}=L^*\circ\prox_g\circ L$.
\end{enumerate}
\end{example}
\begin{proof}
Set $h=((g^*\infconv\qq_{\GG})\circ L)^*-\|L\|^{-2}\qq_{\HH}$. On
the one hand, by \cite[Proposition~13.13]{Livre1}, $g^*$ is 
lower semicontinuous and convex. On the other hand, by
\cite[Propositions~13.10(ii) and 13.12(ii)]{Livre1}, $g^*$ is
proper. Thus, 
\begin{equation}
\label{e:g2}
g^*\in\Gamma_0(\GG)
\end{equation}
and it follows from 
Lemma~\ref{l:4} that $g^*\infconv\qq_{\GG}\colon\GG\to\RR$ is
Fr\'echet differentiable on $\GG$ with nonexpansive gradient
$\Id_{\GG}-\prox_{g^*}$. In turn, 
\begin{equation}
\nabla\big((g^*\infconv\qq_{\GG})\circ L\big)=
L^*\circ(\Id_{\GG}-\prox_{g^*})\circ L
\end{equation}
has Lipschitz constant $\|L\|^2$ and we derive from Lemma~\ref{l:8}
that 
\begin{equation}
\label{e:a3}
\big((g^*\infconv\qq_{\GG})\circ L\big)^*\;\;\text{is}\;\;
\|L\|^{-2}\text{-strongly convex}, 
\end{equation}
We also record the fact that \eqref{e:g2} and
Lemma~\ref{l:7}\ref{l:7ii} imply that $g^{**}\in\Gamma_0(\GG)$.

\ref{ex:8i-}: See \eqref{e:g2}.

\ref{ex:8i}: We infer from \eqref{e:a3} that 
$h\in\Gamma_0(\HH)$. Hence, since $\|L\|^{-2}>1$, we conclude that 
\begin{equation}
\proxc{L}{g}=h+\big(\|L\|^{-2}-1\big)\qq_{\HH}\in\Gamma_0(\HH).
\end{equation}

\ref{ex:8ii-}:
Note that, on account of Lemma~\ref{l:7}\ref{l:7i}, $g^{**}$
admits a continuous affine minorant. Using 
\eqref{e:2}, \eqref{e:1}, \eqref{e:i},
Lemma~\ref{l:4}\ref{l:4vi}, Lemma~\ref{l:4}\ref{l:4i+},
\cite[Proposition~13.16(iii) and Corollary~16.53(i)]{Livre1}, 
we get 
\begin{align}
\label{e:a4}
\proxc{L}{\partial g^{**}}+\Id_\HH
&=L^*\pushfwd(\partial g^{**}+\Id_{\GG})\nonumber\\
&=\big(L^*\circ\big(\partial g^{**}+\Id_{\GG}\big)^{-1}
\circ L\big)^{-1}\nonumber\\
&=\big(L^*\circ\big(\partial(g^{**}+\qq_{\GG})\big)^{-1}
\circ L\big)^{-1}\nonumber\\
&=\big(L^*\circ\partial(g^{**}+\qq_{\GG})^{*}
\circ L\big)^{-1}\nonumber\\
&=\big(L^*\circ\partial(g^{***}\infconv\qq_{\GG})
\circ L\big)^{-1}\nonumber\\
&=\big(L^*\circ\partial(g^{*}\infconv\qq_{\GG})
\circ L\big)^{-1}\nonumber\\
&=\partial\big((g^*\infconv\qq_{\GG})
\circ L\big)^{*}.
\end{align}
Since $0<\|L\|\leq 1$, we deduce from \eqref{e:a3} that 
$((g^*\infconv\qq_{\GG})\circ L)^*-\qq_{\HH}\in\Gamma_0(\HH)$. 
Hence, appealing to Lemma~\ref{l:4}\ref{l:4vi} and 
\eqref{e:12}, we obtain
\begin{align}
\label{e:a5}
\partial\big((g^*\infconv\qq_{\GG})\circ L\big)^*
&=\partial\Big(\big((g^*\infconv\qq_{\GG})\circ L\big)^*-
\qq_{\HH}+\qq_\HH\Big)
\nonumber\\
&=\partial\Big(\big((g^*\infconv\qq_{\GG})\circ L)^*
-\qq_{\HH}\Big)+\partial\qq_\HH\nonumber\\
&=\partial(\proxc{L}{g})+\Id_\HH.
\end{align}
The sought identity follows by combining \eqref{e:a4} 
and \eqref{e:a5}.

\ref{ex:8iii-}: 
In view of \ref{ex:8i}, $\prox_{\proxc{L}{g}}$ is
well defined and, combining \ref{ex:8ii-} and
Proposition~\ref{p:0}, we obtain
$\prox_{\proxc{L}{g}}
=J_{\partial(\proxc{L}{g})}
=J_{\proxc{L}{\partial g^{**}}}
=L^*\circ J_{\partial g^{**}}\circ L
=L^*\circ\prox_{g^{**}}\circ L$.

\ref{ex:8ii}--\ref{ex:8iii}: These identities follow from
Lemma~\ref{l:7}\ref{l:7ii}, \ref{ex:8ii-}, and \ref{ex:8iii-}.
\end{proof}

\begin{example}[proximity operator]
\label{ex:12}
Suppose that $L\in\BL(\HH,\GG)$ satisfies $0<\|L\|\leq 1$ and
let $g\in\Gamma_0(\GG)$. Then we derive from 
Lemma~\ref{l:4}\ref{l:4i} and Example~\ref{ex:8}\ref{ex:8ii} that
\begin{equation}
\proxc{L}{\prox_g}=
\partial\big(\proxc{L}{(g^*\infconv\qq_{\GG})}\big).
\end{equation}
\end{example}

\begin{example}[projection operator]
\label{ex:13}
Suppose that $L\in\BL(\HH,\GG)$ satisfies $0<\|L\|\leq 1$, let 
$C$ be a nonempty closed convex subset of $\GG$, and set 
$g=\iota_C$. Then $g\infconv\qq_\GG=d_C^2/2$ and 
Lemma~\ref{l:4}\ref{l:4ii} yields
$g^*\infconv\qq_\GG=\qq_\GG-d_C^2/2$. Altogether, we derive
from Example~\ref{ex:12} and Lemma~\ref{l:4}\ref{l:4iii} that
\begin{equation}
\proxc{L}{\proj_C}=
\partial\big(\proxc{L}{(\qq_\GG-d_C^2/2)}\big)
\quad\text{and}\quad
\proxc{L}{(\Id_{\GG}-\proj_C)}=
\partial\big(\proxc{L}{(d_C^2/2)}\big).
\end{equation}
\end{example}

\begin{example}[frames]
\label{ex:11}
Suppose that $(e_k)_{k\in\NN}$ is a frame in $\HH$ \cite{Chri08},
i.e., there exist $\alpha\in\RPP$ and $\beta\in\RPP$ such that
\begin{equation}
\label{e:f}
(\forall x\in\HH)\quad 
\alpha\|x\|^2\leq\sum_{k\in\NN}\abscal{x}{e_k}^2\leq\beta\|x\|^2. 
\end{equation}
We set $\GG=\ell^2(\NN)$, denote by
$L\colon\HH\to\GG\colon x\mapsto
(\scal{x}{e_k})_{k\in\NN}$ the frame analysis operator, and
let $(\phi_k)_{k\in\NN}$ be functions in
$\Gamma_0(\RR)$ such that $(\forall k\in\NN)$
$\phi_k\geq\phi_k(0)=0$. Further, we set
$B\colon\GG\to 2^{\GG}\colon (\eta_k)_{k\in\NN}\mapsto
\menge{(\nu_k)_{k\in\NN}\in\GG}{(\forall k\in\NN)\;
\nu_k\in\partial\phi_k(\eta_k)}$. Then
\begin{equation}
\proxc{L}{B}=\Bigg(\sum_{k\in\NN}\big(\prox_{\phi_k}
\scal{\cdot}{e_k}\big)e_k\Bigg)^{-1}-\Id_\HH.
\end{equation}
\end{example}
\begin{proof}
Set $\varphi\colon\GG\to\RX\colon(\eta_k)_{k\in\NN}\mapsto
\sum_{k\in\NN}\phi_k(\eta_k)$ and note that
$L^*\colon\GG\to\HH\colon(\eta_k)_{k\in\NN}
\mapsto\sum_{k\in\NN}\eta_ke_k$. As shown in \cite{Smms05}, 
$\varphi\in\Gamma_0(\GG)$, $B=\partial\varphi$, and 
$J_B\colon(\eta_k)_{k\in\NN}\mapsto
(\prox_{\phi_k}\eta_k)_{k\in\NN}$. Thus,
$(L^*\pushfwd(B+\Id_{\GG}))^{-1}=
L^*\circ J_B\circ L=\sum_{k\in\NN}\big(\prox_{\phi_k}
\scal{\cdot}{e_k}\big)e_k$.
\end{proof}

Our last example parallels Example~\ref{ex:8} in the case of
the proximal cocomposition of Definition~\ref{d:2}.

\begin{example}[subdifferential]
\label{ex:7}
Suppose that $L\in\BL(\HH,\GG)$ satisfies $0<\|L\|\leq 1$ and
let $g\colon\GG\to\RX$ be a proper function that admits a 
continuous affine minorant. Then the following hold:
\begin{enumerate}
\item
\label{ex:7i}
$\proxcc{L}{g}\in\Gamma_0(\HH)$.
\item
\label{ex:7ii-}
$\proxcc{L}{\partial g^{**}}=\partial(\proxcc{L}{g})$.
\item
\label{ex:7iii-}
$\prox_{\proxcc{L}{g}}=
\Id_{\HH}-L^*\circ L+L^*\circ\prox_{g^{**}}\circ L$.
\item
\label{ex:7ii}
Suppose that $g\in\Gamma_0(\GG)$. Then
$\proxcc{L}{\partial g}=\partial(\proxcc{L}{g})$.
\item
\label{ex:7iii}
Suppose that $g\in\Gamma_0(\GG)$. Then
$\prox_{\proxcc{L}{g}}=
\Id_{\HH}-L^*\circ L+L^*\circ\prox_{g}\circ L$.
\end{enumerate}
\end{example}
\begin{proof}
By virtue of Example~\ref{ex:8}\ref{ex:8i-} and 
Lemma~\ref{l:7}\ref{l:7i}, $g^*$ is in $\Gamma_0(\GG)$ and it
admits a continuous affine minorant. As a consequence of 
Example~\ref{ex:8}\ref{ex:8i}, we record the fact that 
\begin{equation}
\label{e:A}
\proxc{L}{g^*}\in\Gamma_0(\HH). 
\end{equation}

\ref{ex:7i}: We invoke \eqref{e:A} and Lemma~\ref{l:7}\ref{l:7ii}
to deduce that $\proxcc{L}{g}=(\proxc{L}{g^*})^*\in\Gamma_0(\HH)$. 

\ref{ex:7ii-}: It follows from Lemma~\ref{l:7}\ref{l:7ii} that
$g^{**}\in\Gamma_0(\GG)$. Hence, using Definition~\ref{d:1},
\eqref{e:A}, \eqref{e:i}, and Definition~\ref{d:2}, we obtain
\begin{equation}
\proxcc{L}{\partial g^{**}}=
\big(\proxc{L}{(\partial g^{**})^{-1}}\big)^{-1}=
\big(\proxc{L}{\partial g^{***}}\big)^{-1}=
\big(\partial(\proxc{L}{g^*})\big)^{-1}=
\partial(\proxc{L}{g^*})^{*}=
\partial(\proxcc{L}{g}).
\end{equation}

\ref{ex:7iii-}: 
Property~\ref{ex:7i} ensures that $\prox_{\proxcc{L}{g}}$ is well
defined. Further, we deduce from \eqref{e:A}, 
Lemma~\ref{l:4}\ref{l:4iii}, and
Example~\ref{ex:8}\ref{ex:8iii-} that
\begin{equation}
\prox_{\proxcc{L}{g}}
=\Id_\HH-\prox_{\proxc{L}{g^*}}
=\Id_\HH-L^*\circ\prox_{g^{***}}\circ L
=\Id_\HH-L^*\circ(\Id_\GG-\prox_{g^{**}})\circ L.
\end{equation}

\ref{ex:7ii}--\ref{ex:7iii}: Since $g=g^{**}$ by 
Lemma~\ref{l:7}\ref{l:7ii}, these follow from \ref{ex:7ii-} and
\ref{ex:7iii-}.
\end{proof}

\section{Properties of the resolvent composition}
\label{sec:4}

We start with basic facts. 

\begin{proposition}
\label{p:1}
Let $L\in\BL(\HH,\GG)$ and let $B\colon\GG\to 2^{\GG}$. Then the
following hold:
\begin{enumerate}
\item
\label{p:1i}
$\proxc{L}{B}=(L^*\circ J_B\circ L)^{-1}-\Id_{\HH}$.
\item
\label{p:1ic}
$\proxcc{L}{B}=(\Id_\HH-L^*\circ L+
L^*\circ J_B\circ L)^{-1}-\Id_{\HH}$.
\item
\label{p:1iic}
Suppose that $L$ is an isometry. Then 
$\proxc{L}{B}=\proxcc{L}{B}$.
\item
\label{p:1ii}
$(\proxc{L}{B})^{-1}=\proxcc{L}{B}^{-1}
=(\Id_{\HH}-L^*\circ J_B\circ L)^{-1}-\Id_\HH$.
\item
\label{p:1ic2}
$J_{\proxcc{L}{B}}=\Id_\HH-L^*\circ L+L^*\circ J_B\circ L$.
\item
\label{p:1i+}
$\gra(\proxc{L}{B})=\menge{(x,x^*)\in\HH\times\HH}
{(x+x^*,x)\in\gra(L^*\circ J_B\circ L)}$.
\item
\label{p:1i+c}
$\gra(\proxcc{L}{B})=\menge{(x,x^*)\in\HH\times\HH}
{(x+x^*,(L^*\circ L)(x+x^*)-x^*)\in\gra(L^*\circ J_B\circ L)}$.
\item
\label{p:1iii}
$\dom(\proxc{L}{B})\subset L^*(\dom B)$. 
\item
\label{p:1iv}
$\ran(\proxc{L}{B})\subset\ran(\Id_\HH-L^*\circ L)+L^*(\ran B)$. 
\item
\label{p:1iiic}
$\dom(\proxcc{L}{B})\subset\ran(\Id_\HH-L^*\circ L)+ L^*(\dom B)$. 
\item
\label{p:1ivc}
$\ran(\proxcc{L}{B})\subset L^*(\ran B)$. 
\item
\label{p:1v}
$\zer(\proxc{L}{B})=\Fix(L^*\circ J_B\circ L)$.
\item
\label{p:1vii}
$L^{-1}(\zer B)\subset\zer(\proxcc{L}{B})$.
\item
\label{p:1vi}
$(\proxc{L}{B})\infconv\Id_{\HH}+
L^*\circ(B^{-1}\infconv\Id_{\GG})\circ L=\Id_{\HH}$.
\item
\label{p:1x}
$(\proxcc{L}{B})\infconv\Id_{\HH}
=L^*\circ(B\infconv\Id_{\GG})\circ L$.
\end{enumerate}
\end{proposition}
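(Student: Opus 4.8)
The plan is to reduce the parallel sum on the left to a resolvent by means of Lemma~\ref{l:1}\ref{l:1i}, substitute the expression for $J_{\proxcc{L}{B}}$ already recorded in item \ref{p:1ic2}, and then recognize the resulting operator as a conjugated parallel sum through a second application of Lemma~\ref{l:1}\ref{l:1i}. No limiting or maximality argument is needed: the whole computation is a short chain of substitutions at the level of graphs.

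First I would apply Lemma~\ref{l:1}\ref{l:1i} with $A=\proxcc{L}{B}$ to rewrite the left-hand side as
\begin{equation}
(\proxcc{L}{B})\infconv\Id_\HH=\Id_\HH-J_{\proxcc{L}{B}}.
\end{equation}
Next I would insert the formula $J_{\proxcc{L}{B}}=\Id_\HH-L^*\circ L+L^*\circ J_B\circ L$ furnished by item \ref{p:1ic2}. The two copies of $\Id_\HH$ cancel, and factoring $L^*$ on the left and $L$ on the right yields
\begin{equation}
(\proxcc{L}{B})\infconv\Id_\HH=L^*\circ L-L^*\circ J_B\circ L=L^*\circ(\Id_\GG-J_B)\circ L.
\end{equation}

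Finally I would invoke Lemma~\ref{l:1}\ref{l:1i} once more, this time in $\GG$ with $A=B$, to identify $\Id_\GG-J_B=\Id_\GG\infconv B=B\infconv\Id_\GG$, the parallel sum being symmetric by its definition in \eqref{e:parasum}. Substituting this pointwise identity into the previous display delivers exactly $L^*\circ(B\infconv\Id_\GG)\circ L$, which is the claim. The argument has no real obstacle; the only points requiring a little care are the bookkeeping of the composition order when factoring $L^*(\cdot)L$ through the operator identity for $\Id_\GG-J_B$, and the (harmless) reliance on item \ref{p:1ic2}, which is established earlier in the same proposition.
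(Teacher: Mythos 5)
Your proposal only treats item \ref{p:1x}; it says nothing about the other fourteen items, so as a proof of the full proposition it is incomplete, but as a proof of \ref{p:1x} it is correct, and it takes a mildly different route from the paper's. The paper passes to the inverse: by \eqref{e:r} and \eqref{e:parasum}, $(\proxcc{L}{B})\infconv\Id_{\HH}=J_{(\proxcc{L}{B})^{-1}}$, then \ref{p:1ii} identifies $(\proxcc{L}{B})^{-1}=\proxc{L}{B^{-1}}$, and Proposition~\ref{p:0} applied to $B^{-1}$ gives $J_{\proxc{L}{B^{-1}}}=L^*\circ J_{B^{-1}}\circ L=L^*\circ(B\infconv\Id_\GG)\circ L$. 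You instead never invert the cocomposition: you use Lemma~\ref{l:1}\ref{l:1i} (together with the commutativity of the parallel sum, which you rightly note) to write the left-hand side as $\Id_\HH-J_{\proxcc{L}{B}}$, substitute the resolvent formula of \ref{p:1ic2}, cancel, and apply Lemma~\ref{l:1}\ref{l:1i} once more in $\GG$ to recognize $\Id_\GG-J_B=J_{B^{-1}}=B\infconv\Id_\GG$. There is no circularity in invoking \ref{p:1ic2}, since the paper establishes it (via \ref{p:1ic}) before \ref{p:1x}. The one step that genuinely needs care — and which you flag — is the cancellation $\Id_\HH-\big(\Id_\HH-L^*\circ L+L^*\circ J_B\circ L\big)=L^*\circ(\Id_\GG-J_B)\circ L$: since $B$ is an arbitrary set-valued operator, $J_B$ need not be single-valued, and the identity must be read pointwise on sets; it holds because $\Id_\HH$ and $L^*\circ L$ are single-valued and so distribute over the set $(L^*\circ J_B\circ L)x$ for each $x$. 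In exchange for this extra bookkeeping, your version makes transparent that $(\proxcc{L}{B})\infconv\Id_\HH$ is exactly the ``complement'' $\Id_\HH-J_{\proxcc{L}{B}}$ of the resolvent in \ref{p:1ic2}, whereas the paper's version exploits the duality $\proxcc{L}{B}=(\proxc{L}{B^{-1}})^{-1}$ and dispenses with \ref{p:1ic2} altogether; both are legitimate three-line substitution chains of essentially equal weight.
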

\begin{proof}
\ref{p:1i}: A consequence of \eqref{e:r} and
Proposition~\ref{p:0}.

\ref{p:1ic}: In view of \ref{p:1i}, Lemma~\ref{l:1}\ref{l:1iii},
 and Lemma~\ref{l:1}\ref{l:1i},
$\proxcc{L}{B}=(\proxc{L}{B^{-1}})^{-1}
=((L^*\circ J_{B^{-1}}\circ L)^{-1}-\Id_{\HH})^{-1}
=(\Id_{\HH}-L^*\circ J_{B^{-1}}\circ L)^{-1}-\Id_\HH
=(\Id_{\HH}-L^*\circ(\Id_\GG-J_B)\circ L)^{-1}-\Id_\HH$.

\ref{p:1iic}: Since $L^*\circ L=\Id_\HH$, this follows from
\ref{p:1i} and \ref{p:1ic}.

\ref{p:1ii}: The first identity is clear by inspecting
Definition~\ref{d:1}. To establish the second, note that \ref{p:1i}
and Lemma~\ref{l:1}\ref{l:1iii} yield
\begin{equation}
(\proxc{L}{B})^{-1}
=\big((L^*\circ J_B\circ L)^{-1}-\Id_\HH\big)^{-1}
=(\Id_{\HH}-L^*\circ J_B\circ L)^{-1}-\Id_\HH.
\end{equation}

\ref{p:1ic2}: A consequence of \ref{p:1ic}.

\ref{p:1i+}: 
Let $(x,x^*)\in\HH\times\HH$. Then \ref{p:1i} yields
$(x,x^*)\in\gra(\proxc{L}{B})$ $\Leftrightarrow$
$x^*\in(L^*\circ J_B\circ L)^{-1}x-x$
$\Leftrightarrow$ $x\in(L^*\circ J_B\circ L)(x+x^*)$.

\ref{p:1i+c}: 
Let $(x,x^*)\in\HH\times\HH$. By \ref{p:1i+} and 
Lemma~\ref{l:1}\ref{l:1i}, 
$(x,x^*)\in\gra(\proxcc{L}{B})$ $\Leftrightarrow$
$(x^*,x)\in\gra(\proxc{L}{B^{-1}})$ $\Leftrightarrow$
$x+x^*\in(L^*\circ J_{B^{-1}}\circ L)^{-1}x^*$ $\Leftrightarrow$
$x^*\in(L^*\circ J_{B^{-1}}\circ L)(x+x^*)
=(L^*\circ L)(x+x^*)-(L^*\circ J_B\circ L)(x+x^*)$ 
$\Leftrightarrow$
$(L^*\circ L)(x+x^*)-x^*\in(L^*\circ J_B\circ L)(x+x^*)$.

\ref{p:1iii}: 
In view of \ref{p:1i} and Proposition~\ref{p:0}, 
\begin{equation}
\label{e:h4}
\dom(\proxc{L}{B})
=\dom(L^*\circ J_B\circ L)^{-1}
=\ran(L^*\circ J_B\circ L)\subset L^*(\ran J_B)
=L^*(\dom B).
\end{equation}

\ref{p:1iv}:
We invoke \ref{p:1ii} and Lemma~\ref{l:1}\ref{l:1i} to get
\begin{align}
\ran(\proxc{L}{B})
&=\dom(\proxc{L}{B})^{-1}
\nonumber\\
&=\dom\big(\Id_{\HH}-L^*\circ J_B\circ L\big)^{-1}
\nonumber\\
&=\ran\big(\Id_\HH-L^*\circ J_B\circ L\big)
\nonumber\\
&=\ran\big(\Id_\HH-L^*\circ L+L^*\circ(\Id_\GG-J_B)\circ L\big)
\nonumber\\
&=\ran\big(\Id_\HH-L^*\circ L+L^*\circ J_{B^{-1}}\circ L\big)
\label{e:h24}\\
&\subset\ran(\Id_\HH-L^*\circ L)+\ran(L^*\circ J_{B^{-1}}\circ L)
\nonumber\\
&\subset\ran(\Id_\HH-L^*\circ L)+L^*(\ran J_{B^{-1}})
\nonumber\\
&=\ran(\Id_\HH-L^*\circ L)+L^*(\dom {B^{-1}})
\nonumber\\
&=\ran(\Id_\HH-L^*\circ L)+L^*(\ran B),
\label{e:h5}
\end{align}
which furnishes the desired inclusion.

\ref{p:1iiic}:
In view of \ref{p:1iv},
$\dom(\proxcc{L}{B})=\ran(\proxc{L}{B^{-1}})
\subset\ran(\Id_\HH-L^*\circ L)+L^*(\ran B^{-1})=
\ran(\Id_\HH-L^*\circ L)+L^*(\dom B)$.

\ref{p:1ivc}:
In view of \ref{p:1iii},
$\ran(\proxcc{L}{B})=\dom(\proxc{L}{B^{-1}})
\subset L^*(\dom B^{-1})=L^*(\ran B)$. 

\ref{p:1v}: Combine Lemma~\ref{l:1}\ref{l:1ii} and
Proposition~\ref{p:0}.

\ref{p:1vii}: Let $x\in\HH$. With the help of 
Lemma~\ref{l:1}\ref{l:1i}--\ref{l:1ii} and
Proposition~\ref{p:0}, we derive that
\begin{eqnarray}
\label{e:d7}
x\in L^{-1}(\zer B)
&\Leftrightarrow& 0\in Lx-J_B(Lx)
\nonumber\\
&\Rightarrow& 0\in L^*\big((\Id_{\GG}-J_{B})Lx\big)
\nonumber\\
&\Leftrightarrow& 0\in L^*\big(J_{B^{-1}}Lx\big)
\nonumber\\
&\Leftrightarrow& 0\in J_{\proxc{L}{B^{-1}}}x
\nonumber\\
&\Leftrightarrow& x\in\big(\Id_{\GG}-J_{\proxc{L}{B^{-1}}}\big)x
\nonumber\\
&\Leftrightarrow& x\in J_{(\proxc{L}{B^{-1}})^{-1}}x
\nonumber\\
&\Leftrightarrow& x\in\zer\big(\proxcc{L}{B}\big).
\end{eqnarray}

\ref{p:1vi}: It follows from Lemma~\ref{l:1}\ref{l:1i} that
$(\proxc{L}{B})\infconv\Id_{\HH}+
(\proxc{L}{B})^{-1}\infconv\Id_{\HH}=\Id_{\HH}$. On the other
hand, Proposition~\ref{p:0} yields
$(\proxc{L}{B})^{-1}\infconv\Id_{\HH}=J_{\proxc{L}{B}}=
L^*\circ(B^{-1}\infconv\Id_{\GG})\circ L$.

\ref{p:1x}:
It follows from \eqref{e:r}, \ref{p:1ii}, and
Proposition~\ref{p:0} that
$(\proxcc{L}{B})\infconv\Id_{\HH}=J_{(\proxcc{L}{B})^{-1}}
=J_{\proxc{L}{B^{-1}}}=L^*\circ J_{B^{-1}}\circ L
=L^*\circ(B\infconv\Id_\GG)\circ L$.
\end{proof}

\begin{remark}[isometry] 
\label{r:2} 
In connection with Proposition~\ref{p:1}\ref{p:1iic}, here are some
important settings in which $L$ is an isometry:
\begin{enumerate}
\item 
\label{r:2i}
Example~\ref{ex:5} under the assumption that
$\sum_{k=1}^p\omega_kL_k^*\circ L_k=\Id_\HH$. 
\item
\label{r:2ii}
The resolvent average of Example~\ref{ex:1}, as a realization of
\ref{r:2i}.
\item 
\label{r:2iii}
Example~\ref{ex:11} under the assumption that $(e_k)_{k\in\NN}$ 
is a \emph{Parseval frame}, i.e., $\alpha=\beta=1$ in \eqref{e:f}. 
\end{enumerate} 
\end{remark}

\begin{proposition}
\label{p:9}
Let $\KK$ be a real Hilbert space, let $Q\in\BL(\HH,\GG)$, 
let $L\in\BL(\GG,\KK)$, and let $B\colon\KK\to 2^{\KK}$. Then
$\proxc{Q}{(\proxc{L}{B})}=\proxc{(L\circ Q)}{B}$.
\end{proposition}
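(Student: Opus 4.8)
The plan is to reduce this associativity-type identity to the composition rule of Proposition~\ref{p:0}, which expresses everything in terms of resolvents. The key observation is that both sides are operators from $\HH$ to $2^\HH$ of the form $(\text{resolvent})^{-1}-\Id_\HH$, so it suffices to show that their resolvents coincide; by \eqref{e:r} and Proposition~\ref{p:1}\ref{p:1i}, two resolvent compositions $\proxc{M}{C}$ are equal precisely when $M^*\circ J_C\circ M$ agree as operators.

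First I would apply Proposition~\ref{p:0} to the outer resolvent composition $\proxc{Q}{(\proxc{L}{B})}$, obtaining
\begin{equation}
\label{e:prop9a}
J_{\proxc{Q}{(\proxc{L}{B})}}
=Q^*\circ J_{\proxc{L}{B}}\circ Q.
\end{equation}
Then I would apply Proposition~\ref{p:0} a second time, now to the inner resolvent composition $\proxc{L}{B}$, to rewrite the middle factor as $J_{\proxc{L}{B}}=L^*\circ J_B\circ L$. Substituting this into \eqref{e:prop9a} gives
\begin{equation}
\label{e:prop9b}
J_{\proxc{Q}{(\proxc{L}{B})}}
=Q^*\circ L^*\circ J_B\circ L\circ Q.
\end{equation}

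Next I would handle the right-hand side. Since $L\in\BL(\GG,\KK)$ and $Q\in\BL(\HH,\GG)$, the composite $L\circ Q$ lies in $\BL(\HH,\KK)$, so $\proxc{(L\circ Q)}{B}$ is well defined by Definition~\ref{d:1}. Applying Proposition~\ref{p:0} to it yields
\begin{equation}
\label{e:prop9c}
J_{\proxc{(L\circ Q)}{B}}
=(L\circ Q)^*\circ J_B\circ(L\circ Q)
=Q^*\circ L^*\circ J_B\circ L\circ Q,
\end{equation}
where the last equality uses the adjoint identity $(L\circ Q)^*=Q^*\circ L^*$ for bounded linear operators. Comparing \eqref{e:prop9b} and \eqref{e:prop9c}, the two resolvents coincide, and since by \eqref{e:r} an operator is determined by its resolvent via $A=J_A^{-1}-\Id_\HH$, we conclude $\proxc{Q}{(\proxc{L}{B})}=\proxc{(L\circ Q)}{B}$.

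I do not anticipate a genuine obstacle here: the statement is essentially the functoriality of $\proxc{\cdot}{\cdot}$ in its linear argument, and Proposition~\ref{p:0} has already absorbed all the analytic content by trading the definitional parallel-composition expression \eqref{e:2} for the transparent resolvent form $L^*\circ J_B\circ L$. The only point requiring minor care is that the intermediate operator $\proxc{L}{B}\colon\GG\to 2^\GG$ must itself be a legitimate set-valued operator so that Proposition~\ref{p:0} applies to the outer composition, but this is immediate from Definition~\ref{d:1} with no domain or maximality hypotheses needed, since Propositions~\ref{p:0} and \ref{p:1}\ref{p:1i} hold for arbitrary $B$.
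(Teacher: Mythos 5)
Your proof is correct and takes essentially the same route as the paper's: both reduce the identity to the resolvent formula of Proposition~\ref{p:0} together with the adjoint rule $(L\circ Q)^*=Q^*\circ L^*$, recovering the operators from their resolvents via $\proxc{M}{C}=(M^*\circ J_C\circ M)^{-1}-\Id_\HH$, which is exactly Proposition~\ref{p:1}\ref{p:1i}. The only cosmetic difference is that the paper writes this as a single chain of equalities in the inverted form rather than first matching resolvents and then inverting.
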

\begin{proof}
It follows from Proposition~\ref{p:1}\ref{p:1i} and 
Proposition~\ref{p:0} that
$\proxc{Q}{(\proxc{L}{B})}
=(Q^*\circ J_{\proxc{L}{B}}\circ Q)^{-1}-\Id_\HH
=(Q^*\circ L^*\circ J_{B}\circ L\circ Q)^{-1}-\Id_\HH
=((L\circ Q)^*\circ J_{B}\circ(L\circ Q))^{-1}-\Id_\HH
=\proxc{(L\circ Q)}{B}$.
\end{proof}

The next results bring into play monotonicity. A key fact is that,
if $L$ is nonexpansive, then the resolvent composition preserves
monotonicity and maximal monotonicity.

\begin{proposition}
\label{p:2}
Let $L\in\BL(\HH,\GG)$ and let $B\colon\GG\to 2^{\GG}$ be monotone.
Then the following hold:
\begin{enumerate}
\item
\label{p:2i}
Suppose that $\|L\|\leq 1$. Then $\proxc{L}{B}$ is monotone.
\item
\label{p:2ic}
Suppose that $\|L\|\leq 1$. Then $\proxcc{L}{B}$ is monotone.
\item
\label{p:2ii}
Suppose that $L\neq 0$, let $\alpha\in\RP$ be such that
$B-\alpha\Id_\GG$ is monotone, set 
$\beta=(\alpha+1)\|L\|^{-2}-1$, and suppose that one of the
following is satisfied:
\begin{enumerate}
\item
\label{p:2ia-}
$\|L\|<\sqrt{\alpha+1}$.
\item
\label{p:2ia}
$\|L\|\leq 1$ and $\alpha>0$, i.e., $B$ is $\alpha$-strongly
monotone. 
\item
\label{p:2ib}
$\|L\|<1$.
\end{enumerate}
Then $\proxc{L}{B}$ is $\beta$-strongly monotone.
\end{enumerate}
\end{proposition}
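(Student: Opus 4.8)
The plan is to reduce every assertion to a statement about the cocoercivity of the single operator $T=L^*\circ J_B\circ L$, and then to read off monotonicity and strong monotonicity of $\proxc{L}{B}$ and $\proxcc{L}{B}$ through the characterizations in Lemma~\ref{l:0} and Lemma~\ref{l:1}\ref{l:1iv}. The bridge to those lemmas is Proposition~\ref{p:1}: by \ref{p:1i} we have $\proxc{L}{B}=T^{-1}-\Id_\HH$, so Lemma~\ref{l:0}\ref{l:0ii} tells us that $\proxc{L}{B}$ is monotone precisely when $T$ is firmly nonexpansive; and by Proposition~\ref{p:0}, $J_{\proxc{L}{B}}=T$, which lets me convert cocoercivity of $T$ into strong monotonicity of $\proxc{L}{B}$.

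First I would record the baseline regularity of $J_B$. Since $B$ is monotone, $J_B$ is firmly nonexpansive on $\dom J_B$ (this is the classical Minty correspondence, Lemma~\ref{l:0}\ref{l:0ii} applied with $A=B$ and $T=J_B$, noting $B=(J_B)^{-1}-\Id_\GG$). For \ref{p:2i}, I invoke Lemma~\ref{l:9}\ref{l:9ii}: with $\|L\|\leq 1$ and $J_B$ firmly nonexpansive, $T=L^*\circ J_B\circ L$ is firmly nonexpansive, whence by Lemma~\ref{l:0}\ref{l:0ii} the operator $\proxc{L}{B}=T^{-1}-\Id_\HH$ is monotone. For \ref{p:2ic}, I observe that $\proxcc{L}{B}=(\proxc{L}{B^{-1}})^{-1}$ and that $B^{-1}$ is monotone whenever $B$ is; applying \ref{p:2i} to $B^{-1}$ shows $\proxc{L}{B^{-1}}$ is monotone, and monotonicity is inverse-invariant, so $\proxcc{L}{B}$ is monotone as well.

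The substantive part is \ref{p:2ii}. The idea is that $\alpha$-strong monotonicity of $B$ upgrades the cocoercivity constant of $J_B$, and composing with $L$ then transfers a quantified cocoercivity to $T$. Concretely, since $B$ is monotone and $B-\alpha\Id_\GG$ is monotone, Lemma~\ref{l:1}\ref{l:1iv} gives that $J_B$ is $(\alpha+1)$-cocoercive; then Lemma~\ref{l:9}\ref{l:9i} (using $L\neq 0$) shows $T=L^*\circ J_B\circ L$ is $(\alpha+1)\|L\|^{-2}$-cocoercive. Writing $c=(\alpha+1)\|L\|^{-2}$, I want to conclude that $\proxc{L}{B}$ is $(c-1)$-strongly monotone, i.e.\ $\beta$-strongly monotone with $\beta=c-1$. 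Here I would again use Lemma~\ref{l:1}\ref{l:1iv} in the reverse direction: since $J_{\proxc{L}{B}}=T$ is $c$-cocoercive and $\proxc{L}{B}$ is monotone, that lemma identifies $\proxc{L}{B}$ as $(c-1)$-strongly monotone, which is exactly $\beta$-strong monotonicity.

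The main obstacle is that Lemma~\ref{l:1}\ref{l:1iv} requires $\proxc{L}{B}$ to be monotone as a standing hypothesis, and more delicately that $\beta>0$, i.e.\ $c>1$; this is precisely where the three sufficient conditions enter. Condition \ref{p:2ia-} forces $\|L\|^2<\alpha+1$, hence $c>1$ directly. Conditions \ref{p:2ia} and \ref{p:2ib} each imply $\|L\|\leq 1$, so that \ref{p:2i} already delivers the needed monotonicity of $\proxc{L}{B}$; under \ref{p:2ia} one has $c\geq\alpha+1>1$, and under \ref{p:2ib} one has $c\geq(\alpha+1)\|L\|^{-2}\geq\|L\|^{-2}>1$. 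So in every case $\beta>0$ and $\proxc{L}{B}$ is monotone, and I would verify that each branch supplies both facts before citing Lemma~\ref{l:1}\ref{l:1iv}. I would present the cocoercivity computation once, then dispatch the three cases as short sign-of-$\beta$ checks.
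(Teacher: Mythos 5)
Your parts \ref{p:2i} and \ref{p:2ic} are correct and essentially the paper's own argument: the paper likewise obtains firm nonexpansiveness of $L^*\circ J_B\circ L$ from Lemma~\ref{l:9}\ref{l:9ii} and then passes to monotonicity of $\proxc{L}{B}=(L^*\circ J_B\circ L)^{-1}-\Id_\HH$, except that it writes out the Minty inequality by hand via Proposition~\ref{p:1}\ref{p:1i+} where you simply cite Lemma~\ref{l:0}\ref{l:0ii} --- a legitimate shortcut. The cocoercivity computation in \ref{p:2ii} also matches the paper, up to one citation slip: Lemma~\ref{l:1}\ref{l:1iv} is stated for $\alpha\in\RPP$, so for $\alpha=0$ the $(\alpha+1)$-cocoercivity of $J_B$ must instead be sourced from Lemma~\ref{l:0}\ref{l:0ii} (your baseline paragraph does contain this fact, but you should say so at the point of use, as the paper does).

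The genuine gap is in your endgame for \ref{p:2ii} under condition \ref{p:2ia-}. Your route applies Lemma~\ref{l:1}\ref{l:1iv} to $A=\proxc{L}{B}$, which requires the standing hypothesis that $\proxc{L}{B}$ is monotone; you assert ``in every case $\beta>0$ and $\proxc{L}{B}$ is monotone,'' but the only source of monotonicity you cite is part \ref{p:2i}, which requires $\|L\|\leq 1$. Condition \ref{p:2ia-} permits $1<\|L\|<\sqrt{\alpha+1}$ (e.g., $\alpha=3$ and $\|L\|=3/2$), and there part \ref{p:2i} is silent, so your case \ref{p:2ia-} is unsupported as written. The repair is one line from material you already have: since $c=(\alpha+1)\|L\|^{-2}>1$ under \ref{p:2ia-}, the $c$-cocoercive operator $T=L^*\circ J_B\circ L$ is in particular firmly nonexpansive, so Lemma~\ref{l:0}\ref{l:0ii} and Proposition~\ref{p:1}\ref{p:1i} deliver the missing monotonicity. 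Better still, the paper avoids the detour entirely: $c$-cocoercivity of $T$ says precisely that $T^{-1}$ is $c$-strongly monotone, whence $\proxc{L}{B}=T^{-1}-\Id_\HH$ is $(c-1)=\beta$-strongly monotone directly, with no separate monotonicity check; the paper also observes that \ref{p:2ia} and \ref{p:2ib} are special cases of \ref{p:2ia-}, which collapses your three-branch bookkeeping to a single case.
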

\begin{proof}
\ref{p:2i}:
We set $R=L^*\circ J_B\circ L$ and note that $R$ is single-valued
on its domain since Lemma~\ref{l:0}\ref{l:0i}-\ref{l:0ii} states
that $J_B$ is. Now take $(x_1,x_1^*)\in\gra(\proxc{L}{B})$ and
$(x_2,x_2^*)\in\gra(\proxc{L}{B})$. By
Proposition~\ref{p:1}\ref{p:1i+}, $(x_1+x_1^*,x_1)\in\gra R$ and
$(x_2+x_2^*,x_2)\in\gra R$, i.e., $x_1=R(x_1+x_1^*)$ and
$x_2=R(x_2+x_2^*)$. However, since $R$ is firmly nonexpansive by
Lemma~\ref{l:9}\ref{l:9ii}, we get
\begin{align}
\scal{x_1-x_2}{x_1^*-x_2^*}
&=\scal{R(x_1+x_1^*)-R(x_2+x_2^*)}{(x_1+x_1^*)-(x_2+x_2^*)}
-\|x_1-x_2\|^2\nonumber\\
&\geq\|R(x_1+x_1^*)-R(x_2+x_2^*)\|^2-\|x_1-x_2\|^2\nonumber\\
&=0,
\end{align}
which establishes \eqref{e:mon2}.

\ref{p:2ic}: Since monotonicity is preserved under inversion,
$B^{-1}$ is monotone, and so is $\proxc{L}{B^{-1}}$ by \ref{p:2i}.
In turn, if $\proxcc{L}{B}=(\proxc{L}{B^{-1}})^{-1}$ is monotone as
well.

\ref{p:2ii}: We consider only property \ref{p:2ia-}, which implies
that $\beta>0$, since \ref{p:2ia} and \ref{p:2ib} are special cases
of it. In view of Lemma~\ref{l:0}\ref{l:0ii} (for $\alpha=0$) and
Lemma~\ref{l:1}\ref{l:1iv} (for $\alpha>0$), $J_B$ is
$(\alpha+1)$-cocoercive and $L^*\circ J_B\circ L$ is therefore
$(\alpha+1)\|L\|^{-2}$-cocoercive on account of 
Lemma~\ref{l:9}\ref{l:9i}.
This shows that $(L^*\circ J_B\circ L)^{-1}$ is 
$(\alpha+1)\|L\|^{-2}$-strongly monotone. Appealing to
Proposition~\ref{p:1}\ref{p:1i}, we conclude that
$\proxc{L}{B}=(L^*\circ J_B\circ L)^{-1}-\Id_{\HH}$ is
$\beta$-strongly monotone.
\end{proof}

The theorem below significantly improves
Proposition~\ref{p:2}\ref{p:2i}-\ref{p:2ic} and
Proposition~\ref{p:1}\ref{p:1iii}--\ref{p:1ivc} in the case of
maximally monotone operators. 

\begin{theorem}
\label{t:1}
Let $L\in\BL(\HH,\GG)$ be such that $\|L\|\leq 1$ and let
$B\colon\GG\to 2^{\GG}$ be maximally monotone. Then the following
hold:
\begin{enumerate}
\item
\label{t:1i}
$\proxc{L}{B}$ is maximally monotone.
\item
\label{t:1ic}
$\proxcc{L}{B}$ is maximally monotone.
\item
\label{t:1i+}
Suppose that $L$ is injective and that $B$ is at most
single-valued. Then $\proxc{L}{B}$ is at most single-valued.
\item
\label{t:1i++}
Suppose that $L$ and $B$ are injective. Then $\proxc{L}{B}$ is
injective.
\item
\label{t:1ii}
$\inte\dom(\proxc{L}{B})=\inte L^*(\dom B)$.
\item
\label{t:1iii}
$\cdom(\proxc{L}{B})=\overline{L^*(\dom B)}$.
\item
\label{t:1iv}
$\inte\ran(\proxc{L}{B})=\inte(\ran(\Id_\HH-L^*\circ L)
+L^*(\ran B))$.
\item
\label{t:1v}
$\cran(\proxc{L}{B})=\overline{\ran(\Id_\HH-L^*\circ L)
+L^*(\ran B)}$.
\item
\label{t:1iic}
$\inte\dom(\proxcc{L}{B})=\inte(\ran(\Id_\HH-L^*\circ L)
+L^*(\dom B))$.
\item
\label{t:1iiic}
$\cdom(\proxcc{L}{B})=\overline{\ran(\Id_\HH-L^*\circ L)
+L^*(\dom B)}$.
\item
\label{t:1ivc}
$\inte\ran(\proxcc{L}{B})=\inte L^*(\ran B)$.
\item
\label{t:1vc}
$\cran(\proxcc{L}{B})=\overline{L^*(\ran B)}$.
\end{enumerate}
\end{theorem}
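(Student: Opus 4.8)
The plan is to treat the twelve assertions in four groups, reducing everything to the firmly nonexpansive operator $R=L^*\circ J_B\circ L$ via Proposition~\ref{p:1}\ref{p:1i}, which gives $\proxc{L}{B}=R^{-1}-\Id_\HH$. Since $B$ is maximally monotone, Lemma~\ref{l:0}\ref{l:0iii} makes $J_B$ firmly nonexpansive with full domain $\GG$, whence $R$ is firmly nonexpansive with full domain $\HH$ by Lemma~\ref{l:9}\ref{l:9ii}. For \ref{t:1i}, applying Lemma~\ref{l:0}\ref{l:0iii} to $R$ shows that $R^{-1}-\Id_\HH=\proxc{L}{B}$ is maximally monotone; \ref{t:1ic} then follows because $B^{-1}$ is maximally monotone and, by Definition~\ref{d:1}, $\proxcc{L}{B}=(\proxc{L}{B^{-1}})^{-1}$ is the inverse of the maximally monotone operator $\proxc{L}{B^{-1}}$.

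For the single-valuedness group I would use Proposition~\ref{p:1}\ref{p:1i+}, which identifies $\gra(\proxc{L}{B})$ with $\menge{(x,x^*)}{R(x+x^*)=x}$, so that $\proxc{L}{B}$ is at most single-valued exactly when $R$ is injective. To prove \ref{t:1i+}, if $Ru_1=Ru_2$ then $\scal{u_1-u_2}{Ru_1-Ru_2}=\scal{Lu_1-Lu_2}{J_B(Lu_1)-J_B(Lu_2)}=0$, and firm nonexpansiveness of $J_B$ forces $J_B(Lu_1)=J_B(Lu_2)$; since $B$ is at most single-valued this yields $Lu_1=Lu_2$, and injectivity of $L$ gives $u_1=u_2$. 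For \ref{t:1i++}, the cleanest route is the isometric dilation introduced below: by Definition~\ref{d:1}, $(\proxc{L}{B})^{-1}=\proxcc{L}{B^{-1}}$ is realized as a genuine resolvent composition $\proxc{\widetilde L}{\widetilde B}$ with $\widetilde L$ an isometry and $\widetilde B$ at most single-valued (which holds precisely because $B$ is injective), so \ref{t:1i+} applies and shows $(\proxc{L}{B})^{-1}$ single-valued, i.e.\ $\proxc{L}{B}$ injective.

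The domain and range identities are the heart of the matter, and the main tool is Penn's Lemma~\ref{l:3} applied not to $B$ but to the firmly nonexpansive resolvent $J_B$: firm nonexpansiveness makes $J_B$ cocoercive and hence $3^*$ monotone (a short estimate bounds $\scal{y_1-y_2}{J_By_2-y_1^*}$ by $\|y_1-w\|^2/4$, where $y_1^*=J_Bw$), while Lemma~\ref{l:9}\ref{l:9iii} supplies the maximal monotonicity of $L^*\circ J_B\circ L$ that Lemma~\ref{l:3} requires. For \ref{t:1ii} and \ref{t:1iii}, note $\dom(\proxc{L}{B})=\ran R$ and $\ran J_B=\dom B$; the inclusions $\inte L^*(\dom B)\subseteq\ran R$ and $L^*(\dom B)\subseteq\cran R$ from Lemma~\ref{l:3}\ref{l:3i}--\ref{l:3ii}, combined with the reverse inclusion $\dom(\proxc{L}{B})\subseteq L^*(\dom B)$ of Proposition~\ref{p:1}\ref{p:1iii}, give the two identities after taking interiors and closures. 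The range identities \ref{t:1iv} and \ref{t:1v} are harder because $\ran(\proxc{L}{B})=\ran(\Id_\HH-R)$ and $\Id_\HH-R=(\Id_\HH-L^*\circ L)+L^*\circ J_{B^{-1}}\circ L$ by Lemma~\ref{l:1}\ref{l:1i}, which is not a bare composition. Here I would introduce the dilation $\widetilde L\colon\HH\to\GG\oplus\HH\colon x\mapsto\big(Lx,(\Id_\HH-L^*\circ L)^{1/2}x\big)$, which satisfies $\widetilde L^*\widetilde L=\Id_\HH$ because $\|L\|\le1$, together with the maximally monotone operator $\widetilde B$ acting as $B^{-1}$ on the first factor and as $0$ on the second; then $\Id_\HH-R=\widetilde L^*\circ J_{\widetilde B}\circ\widetilde L$ is a genuine composition, and Penn's lemma applied to it yields $\inte\big(L^*(\ran B)+\ran((\Id_\HH-L^*\circ L)^{1/2})\big)\subseteq\inte\ran(\proxc{L}{B})$, and similarly for closures.

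The final step reconciles this square-root range with the stated set. Since $\ran(\Id_\HH-L^*\circ L)\subseteq\ran((\Id_\HH-L^*\circ L)^{1/2})$, chaining the Penn inclusion above with the reverse inclusion $\ran(\proxc{L}{B})\subseteq\ran(\Id_\HH-L^*\circ L)+L^*(\ran B)$ of Proposition~\ref{p:1}\ref{p:1iv} forces the three nested interiors to collapse to equalities, which proves \ref{t:1iv}; the closure analogue proves \ref{t:1v}. The four cocomposition statements then follow by duality from the composition ones: since $\proxcc{L}{B}=(\proxc{L}{B^{-1}})^{-1}$ we have $\dom(\proxcc{L}{B})=\ran(\proxc{L}{B^{-1}})$ and $\ran(\proxcc{L}{B})=\dom(\proxc{L}{B^{-1}})$, so \ref{t:1ivc} and \ref{t:1vc} come from \ref{t:1ii} and \ref{t:1iii} applied to $B^{-1}$, while \ref{t:1iic} and \ref{t:1iiic} come from \ref{t:1iv} and \ref{t:1v} applied to $B^{-1}$. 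The main obstacle is exactly the range computation \ref{t:1iv}/\ref{t:1v}: recognizing the cocomposition resolvent as an honest composition through an isometric dilation and then matching $\ran((\Id_\HH-L^*\circ L)^{1/2})$ with $\ran(\Id_\HH-L^*\circ L)$ up to interior and closure by a squeeze rather than a set-theoretic identity.
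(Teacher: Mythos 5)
Your proposal is correct, and while it shares the paper's overall skeleton (reduce to $R=L^*\circ J_B\circ L$ via Proposition~\ref{p:1}\ref{p:1i}, use Pennanen's Lemma~\ref{l:3} for the domain identities \ref{t:1ii}--\ref{t:1iii}, and get all cocomposition items by inversion), it genuinely diverges from the paper at three points. For \ref{t:1i} you apply Minty's correspondence (Lemma~\ref{l:0}\ref{l:0iii}) directly to the firmly nonexpansive, everywhere-defined $R$, whereas the paper takes a longer route through Proposition~\ref{p:2}\ref{p:2i} and the difference theorem of Lemma~\ref{l:2}; your version is more economical. For \ref{t:1i++} the paper invokes the characterization of injectivity via \emph{strict} nonexpansiveness of the resolvent \cite[Theorem~2.1(ix)]{Baus12} and proves $R$ strictly nonexpansive by a norm-chain argument, while you instead exploit $(\proxc{L}{B})^{-1}=\proxcc{L}{B^{-1}}$ together with your dilation to reduce to \ref{t:1i+}; this is valid (the identification $\proxcc{L}{B^{-1}}=\proxc{\widetilde{L}}{\widetilde{B}}$ holds because both sides have resolvent $\Id_\HH-L^*\circ L+L^*\circ J_{B^{-1}}\circ L$ by Proposition~\ref{p:1}\ref{p:1ic2}, and operators with equal resolvents coincide; $\widetilde{B}=B^{-1}\times 0$ is maximally monotone and at most single-valued exactly when $B$ is injective). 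The biggest divergence is \ref{t:1iv}--\ref{t:1v}: the paper embeds via the \emph{non}-isometric $\boldsymbol{L}\colon x\mapsto(x,Lx)$ and $\boldsymbol{B}\colon(x,y)\mapsto(\Id_\HH-L^*\circ L)x\times\{J_{B^{-1}}y\}$, so that $\boldsymbol{L}^*(\ran\boldsymbol{B})=\ran(\Id_\HH-L^*\circ L)+L^*(\ran B)$ matches the target set exactly, at the price of verifying maximal monotonicity of $\boldsymbol{L}^*\circ\boldsymbol{B}\circ\boldsymbol{L}$ through \cite[Corollary~25.6]{Livre1} and $3^*$~monotonicity of $\Id_\HH-L^*\circ L$ separately; your isometric dilation $\widetilde{L}\colon x\mapsto\big(Lx,(\Id_\HH-L^*\circ L)^{1/2}x\big)$ instead realizes $\Id_\HH-R$ as an honest composition whose middle operator $J_{\widetilde{B}}$ is firmly nonexpansive with full domain, so maximality comes free from Lemma~\ref{l:9}\ref{l:9iii} and $3^*$~monotonicity from your (correct) cocoercivity estimate, but you inherit $\ran\big((\Id_\HH-L^*\circ L)^{1/2}\big)$ rather than $\ran(\Id_\HH-L^*\circ L)$. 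Your squeeze resolves this properly: since $\ran(\Id_\HH-L^*\circ L)\subseteq\ran\big((\Id_\HH-L^*\circ L)^{1/2}\big)$, the chain $\inte\big(\ran(\Id_\HH-L^*\circ L)+L^*(\ran B)\big)\subseteq\inte\ran(\proxc{L}{B})$ combined with Proposition~\ref{p:1}\ref{p:1iv} collapses to equality, and likewise for closures -- crucially you only claim identities of interiors and closures, which is all the theorem asserts (a set-theoretic identity between the two candidate sets would be false in general). In sum: the paper's embedding buys an exact match of sets with heavier maximality machinery; your dilation buys cheap maximality and a unified view of the cocomposition as a composition, at the cost of the square-root mismatch, which your interior/closure squeeze handles correctly.
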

\begin{proof}
It follows from Lemma~\ref{l:0}\ref{l:0iii} that 
$J_B\colon\GG\to\GG$ is firmly nonexpansive. Hence, we derive from
Lemma~\ref{l:9}\ref{l:9iii} that 
\begin{equation}
\label{e:26}
L^*\circ J_B\circ L\;\text{is maximally monotone.}
\end{equation}

\ref{t:1i}: It follows from \eqref{e:26} that 
$(L^*\circ J_B\circ L)^{-1}$ is maximally monotone. In view of
Proposition~\ref{p:1}\ref{p:1i}, Proposition~\ref{p:2}\ref{p:2i},
and Lemma~\ref{l:2}, we conclude that $\proxc{L}{B}$ is maximally
monotone.

\ref{t:1ic}: Since maximal monotonicity is preserved under
inversion, $B^{-1}$ is maximally monotone. In view of \ref{t:1i},
this renders $\proxc{L}{B^{-1}}$ maximally monotone. We then infer
that $\proxcc{L}{B}=(\proxc{L}{B^{-1}})^{-1}$ is maximally
monotone.

\ref{t:1i+}: Let us first recall that a maximally monotone operator
is at most single-valued if and only if its resolvent is injective
\cite[Theorem~2.1(iv)]{Baus12}. Hence, $J_B$ is injective and,
appealing to \ref{t:1i} and Proposition~\ref{p:0}, it is enough to
show that $L^*\circ J_B\circ L$ is injective. Let $x_1\in\HH$ and
$x_2\in\HH$ be such that 
$(L^*\circ J_B\circ L)x_1=(L^*\circ J_B\circ L)x_2$. Then, since
Lemma~\ref{l:0}\ref{l:0iii} asserts that $J_B$ is firmly
nonexpansive, 
\begin{align}
0&
=\scal{(L^*\circ J_B\circ L)x_1-(L^*\circ J_B\circ L)x_2}{x_1-x_2}
\nonumber\\
&=\scal{J_B(Lx_1)-J_B(Lx_2)}{Lx_1-Lx_2}\nonumber\\
&\geq\|J_B(Lx_1)-J_B(Lx_2)\|^2.
\end{align}
Therefore $J_B(Lx_1)=J_B(Lx_2)$ and, since $J_B$ is injective,
$Lx_1=Lx_2$.
Finally, the injectivity of $L$ yields $x_1=x_2$.

\ref{t:1i++}: Using the fact that a maximally monotone operator is
injective if and only if its resolvent is strictly nonexpansive
\cite[Theorem~2.1(ix)]{Baus12}, we obtain the strict
nonexpansiveness of $J_B$. Furthermore, according to \ref{t:1i} 
and Proposition~\ref{p:0}, it is enough to show that 
$L^*\circ J_B\circ L$ is strictly nonexpansive. To this end, we let
$x_1\in\HH$ and $x_2\in\HH$ be such that 
\begin{equation}
\|(L^*\circ J_B\circ L)x_1-(L^*\circ J_B\circ L)x_2\|=\|x_1-x_2\|. 
\end{equation}
Then, since $\|L^*\|=\|L\|\leq 1$, 
\begin{align}
\|x_1-x_2\|
&=\|(L^*\circ J_B\circ L)x_1-(L^*\circ J_B\circ L)x_2\|\nonumber\\
&\leq\|J_B(Lx_1)-J_B(Lx_2)\|\nonumber\\
&\leq\|Lx_1-Lx_2\|\nonumber\\
&\leq\|x_1-x_2\|.
\end{align}
Thus, $\|J_B(Lx_1)-J_B(Lx_2)\|=\|Lx_1-Lx_2\|$ and, since $J_B$ is
strictly nonexpansive, we obtain $Lx_1=Lx_2$. In view of the
injectivity of $L$, this means that $x_1=x_2$. As
Lemma~\ref{l:0}\ref{l:0iii} and Lemma~\ref{l:9}\ref{l:9ii} imply
that $L^*\circ J_B\circ L$ is nonexpansive, we conclude that it is
strictly nonexpansive.

\ref{t:1ii}--\ref{t:1iii}: 
Arguing as in \eqref{e:h4}, we observe that 
\begin{equation}
\label{e:h7}
\ran(L^*\circ J_B\circ L)=\dom(\proxc{L}{B})\subset L^*(\dom B).
\end{equation}
On the other hand, \cite[Example~25.20(ii)]{Livre1} asserts that
$J_B$ is $3^*$~monotone. Therefore, we derive from \eqref{e:26}
and Lemma~\ref{l:3}\ref{l:3i} that  
\begin{equation}
\inte L^*(\dom B)=\inte L^*(\ran J_B)
\subset\ran(L^*\circ J_B\circ L)=\dom(\proxc{L}{B})
\subset L^*(\dom B),
\end{equation}
which yields \ref{t:1ii}. Let us turn to \ref{t:1iii}.
Proceeding as above and invoking
Lemma~\ref{l:3}\ref{l:3ii}, \eqref{e:h7} yields
\begin{equation}
{L^*(\dom B)}={L^*(\ran J_B)}
\subset\cran(L^*\circ J_B\circ L)=\cdom(\proxc{L}{B})
\subset\overline{L^*(\dom B)}
\end{equation}
and, therefore, $\cdom(\proxc{L}{B})=\overline{L^*(\dom B)}$.

\ref{t:1iv}--\ref{t:1v}: Set 
\begin{equation}
\begin{cases}
A=\Id_\HH-L^*\circ L\\
\boldsymbol{L}\colon\HH\to\HH\oplus\GG\colon x\mapsto(x,Lx)\\
\boldsymbol{B}\colon\HH\oplus\GG\to 2^{\HH}\times 2^{\GG}
\colon(x,y)\mapsto Ax\times\{J_{B^{-1}}y\}.
\end{cases}
\end{equation}
Since $\boldsymbol{L}^*\colon\HH\oplus\GG\to\HH\colon
(x^*,y^*)\mapsto x^*+L^*y^*$, we deduce from \eqref{e:h24} 
and \eqref{e:h5} that 
\begin{equation}
\label{e:h6}
\ran(\boldsymbol{L}^*\circ\boldsymbol{B}\circ\boldsymbol{L})=
\ran(A+L^*\circ J_{B^{-1}}\circ L)
=\ran(\proxc{L}{B})\subset\ran A +L^*(\ran B)
=\boldsymbol{L}^*(\ran\boldsymbol{B}).
\end{equation}
In addition, since
\begin{equation}
(\forall x\in\HH)\quad\scal{x}{L^*(Lx)}=\|Lx\|^2\geq
\|L\|^2\,\|Lx\|^2\geq\|L^*(Lx)\|^2,
\end{equation}
the operator $L^*\circ L$ is firmly nonexpansive and so is
therefore $A=\Id_\HH-L^*\circ L$, which is thus maximally monotone
by virtue of \cite[Example~20.30]{Livre1}. In view of 
\cite[Proposition~25.16]{Livre1}, this means that $A$ is 
$3^*$~monotone. On the other hand, since $B^{-1}$ is maximally
monotone, we derive from \cite[Example~25.20(iii)]{Livre1} that
$J_{B^{-1}}$ is $3^*$~monotone. Thus, $\boldsymbol{B}$ 
is $3^*$~monotone. Moreover, since \cite[Proposition~20.23]{Livre1}
implies that $\boldsymbol{B}$ is maximally monotone and since
$\dom\boldsymbol{B}=\HH\oplus\GG$, it follows from
\cite[Corollary~25.6]{Livre1} that
$\boldsymbol{L}^*\circ\boldsymbol{B}\circ\boldsymbol{L}$ is
maximally monotone. We can therefore invoke
Lemma~\ref{l:3}\ref{l:3i} to obtain
\begin{equation}
\label{e:h8}
\inte\boldsymbol{L}^*(\ran\boldsymbol{B})\subset
\ran(\boldsymbol{L}^*\circ\boldsymbol{B}\circ\boldsymbol{L}).
\end{equation}
In view of \eqref{e:h6}, this proves \ref{t:1iv}. Similarly, 
Lemma~\ref{l:3}\ref{l:3ii} guarantees that
\begin{equation}
\label{e:h9}
\boldsymbol{L}^*(\ran\boldsymbol{B})\subset
\cran(\boldsymbol{L}^*\circ\boldsymbol{B}\circ\boldsymbol{L})
\end{equation}
and, using \eqref{e:h6}, we arrive at \ref{t:1v}.

\ref{t:1iic}: Using \ref{t:1iv}, we obtain
\begin{align}
\inte\dom\proxcc{L}{B}
&=\inte\ran\proxc{L}{B^{-1}}\nonumber\\
&=\inte\big(\ran(\Id_\HH-L^*\circ L)+L^*(\ran B^{-1})\big)
\nonumber\\
&=\inte\big(\ran(\Id_\HH-L^*\circ L)+L^*(\dom B)\big).
\end{align}

\ref{t:1iiic}: Using \ref{t:1v}, we obtain
\begin{align}
\cdom\proxcc{L}{B}
&=\cran\proxc{L}{B^{-1}}\nonumber\\
&=\overline{\ran(\Id_\HH-L^*\circ L)+L^*(\ran B^{-1})}
\nonumber\\
&=\overline{\ran(\Id_\HH-L^*\circ L)+L^*(\dom B)}.
\end{align}

\ref{t:1ivc}: Using \ref{t:1ii}, we obtain
$\inte\ran\proxcc{L}{B}=\inte\dom\proxc{L}{B^{-1}}
=\inte L^*(\dom B^{-1})=\inte L^*(\ran B)$.

\ref{t:1vc}: Using \ref{t:1iii}, we obtain
$\cran\proxcc{L}{B}=\cdom\proxc{L}{B^{-1}}
=\overline{L^*(\dom B^{-1})}=\overline{L^*(\ran B)}$.
\end{proof}

\begin{corollary}
\label{c:3}
Suppose that $L\in\BL(\HH,\GG)$ satisfies $\|L\|\leq 1$ and let 
$B\colon\GG\to 2^{\GG}$ be maximally monotone. Then the following 
hold:
\begin{enumerate}
\item
\label{c:3i}
Suppose that $L^*(\dom B)=\HH$. Then $\dom(\proxc{L}{B})=\HH$. 
\item
\label{c:3ii}
Suppose that $\ran(\Id_\HH-L^*\circ L)+L^*(\ran B)=\HH$. Then
$\proxc{L}{B}$ is surjective.
\item
\label{c:3ic}
Suppose that $\ran(\Id_\HH-L^*\circ L)+L^*(\dom B)=\HH$. Then
$\dom(\proxcc{L}{B})=\HH$.
\item
\label{c:3iii}
Suppose that $L^*(\ran B)=\HH$. 
Then $\proxcc{L}{B}$ is surjective.
\end{enumerate}
\end{corollary}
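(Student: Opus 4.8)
The plan is to read each of the four claims directly off the \emph{interior} identities in Theorem~\ref{t:1}, exploiting the elementary facts that the whole space $\HH$ is open, so that $\inte\HH=\HH$, and that $\inte S\subset S\subset\HH$ for every subset $S$ of $\HH$. The point is that in each case the hypothesis asserts that a certain set equals $\HH$; Theorem~\ref{t:1} identifies the interior of the relevant domain or range with the interior of precisely that set; and the sandwich $\HH=\inte(\cdot)\subset(\cdot)\subset\HH$ then upgrades this to the full equality the corollary demands.

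First I would prove \ref{c:3i}. The hypothesis $L^*(\dom B)=\HH$ gives $\inte L^*(\dom B)=\inte\HH=\HH$, whence Theorem~\ref{t:1}\ref{t:1ii} yields $\inte\dom(\proxc{L}{B})=\HH$; the inclusions $\HH=\inte\dom(\proxc{L}{B})\subset\dom(\proxc{L}{B})\subset\HH$ then force $\dom(\proxc{L}{B})=\HH$. The remaining three parts follow the identical template, each time replacing Theorem~\ref{t:1}\ref{t:1ii} by the matching interior identity: \ref{c:3ii} invokes Theorem~\ref{t:1}\ref{t:1iv} with the hypothesis $\ran(\Id_\HH-L^*\circ L)+L^*(\ran B)=\HH$ to obtain $\inte\ran(\proxc{L}{B})=\HH$, hence $\ran(\proxc{L}{B})=\HH$, i.e.\ surjectivity; \ref{c:3ic} invokes Theorem~\ref{t:1}\ref{t:1iic} to obtain $\dom(\proxcc{L}{B})=\HH$; and \ref{c:3iii} invokes Theorem~\ref{t:1}\ref{t:1ivc} with $L^*(\ran B)=\HH$ to obtain $\ran(\proxcc{L}{B})=\HH$.

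I do not expect any genuine obstacle, since all the analytic substance—the maximal monotonicity of $L^*\circ J_B\circ L$, the $3^*$-monotonicity arguments, and the applications of Lemma~\ref{l:3}—has already been absorbed into Theorem~\ref{t:1}. The one subtlety worth flagging is that the argument must use the \emph{interior} identities (\ref{t:1ii}, \ref{t:1iv}, \ref{t:1iic}, \ref{t:1ivc}) and not the closure identities (\ref{t:1iii}, \ref{t:1v}, \ref{t:1iiic}, \ref{t:1vc}): the latter would only deliver density, e.g.\ $\cdom(\proxc{L}{B})=\HH$, which is strictly weaker than $\dom(\proxc{L}{B})=\HH$. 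It is precisely the interior versions, combined with $\inte\HH=\HH$, that convert density into the full equality asserted by the corollary.
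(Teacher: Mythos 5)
Your proof is correct and follows exactly the paper's own route: the paper likewise deduces \ref{c:3i}, \ref{c:3ii}, \ref{c:3ic}, and \ref{c:3iii} from the interior identities in Theorem~\ref{t:1}\ref{t:1ii}, \ref{t:1iv}, \ref{t:1iic}, and \ref{t:1ivc}, respectively. Your sandwich argument via $\inte\HH=\HH$ is the (implicit) mechanism behind the paper's one-line proof, and your remark that the closure identities would only yield density is a correct and worthwhile observation.
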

\begin{proof}
We deduce \ref{c:3i} from Theorem~\ref{t:1}\ref{t:1ii}, 
\ref{c:3ii} from Theorem~\ref{t:1}\ref{t:1iv}, 
\ref{c:3ic} from Theorem~\ref{t:1}\ref{t:1iic}, 
and \ref{c:3iii} from Theorem~\ref{t:1}\ref{t:1ivc}.
\end{proof}

\begin{example}
\label{ex:4}
Going back to Example~\ref{ex:9}, let $B\colon\HH\to 2^{\HH}$ be
maximally monotone and suppose that $V\neq\{0\}$ is a closed vector
subspace of $\HH$ such that $(\forall v\in V)$ 
$(v+V^\bot)\cap\ran B\neq\emp$. Then $\proxc{\proj_V}{B}$ is
surjective.
\end{example}
\begin{proof}
Set $L=\proj_V$. Then $\|L\|=1$ and
$\ran(\Id_\HH-L^*\circ L)=\ran(\Id_\HH-\proj_V)=V^\bot$.
On the other hand, $(\forall v\in V)(\exi x^*\in\ran B)$ 
$x^*\in v+V^\bot=\proj_V^{-1} v$. Therefore 
$L^*(\ran B)=\proj_V(\ran B)=V$. Thus,
$\ran(\Id_\HH-L^*\circ L)+L^*(\ran B)=V+V^\bot=\HH$ and the result
follows from Corollary~\ref{c:3}\ref{c:3ii}.
\end{proof}

\begin{proposition}
\label{p:7}
Suppose that $L\in\BL(\HH,\GG)$, let $\beta\in\RPP$, let $D$
be a nonempty subset of $\GG$, let $B\colon D\to\GG$ be
$\beta$-cocoercive, suppose that $0<\|L\|<\sqrt{\beta+1}$, and set
$\alpha=(\beta+1)\|L\|^{-2}-1$. Then $\proxcc{L}{B}$ is
$\alpha$-cocoercive.
\end{proposition}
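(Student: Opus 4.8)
The plan is to reduce the assertion to Proposition~\ref{p:2}\ref{p:2ii} by passing to inverses, exploiting the elementary duality between cocoercivity and strong monotonicity. The starting point is the observation that the defining inequality of $\beta$-cocoercivity of the single-valued operator $B$, namely $\scal{y_1-y_2}{By_1-By_2}\geq\beta\|By_1-By_2\|^2$ for $y_1,y_2\in D$, is \emph{literally} the statement that $B^{-1}-\beta\Id_{\GG}$ is monotone, once one reads it off the graph of $B^{-1}$ with $u_i=By_i$ and $v_i=y_i$. Hence $B^{-1}\colon\GG\to 2^{\GG}$ is $\beta$-strongly monotone, and in particular monotone.

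Next I would invoke Proposition~\ref{p:2}\ref{p:2ii} applied to $B^{-1}$ in place of $B$, with the strong-monotonicity modulus $\beta$ of $B^{-1}$ playing the role of the parameter there denoted $\alpha$. The standing hypothesis $0<\|L\|<\sqrt{\beta+1}$ is precisely the trigger \ref{p:2ia-} of that proposition (and it forces $L\neq 0$), so the conclusion applies and yields that $\proxc{L}{B^{-1}}$ is strongly monotone with modulus $(\beta+1)\|L\|^{-2}-1=\alpha$. Since Definition~\ref{d:1} gives $\proxcc{L}{B}=(\proxc{L}{B^{-1}})^{-1}$, we have $(\proxcc{L}{B})^{-1}=\proxc{L}{B^{-1}}$, which is thus $\alpha$-strongly monotone. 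Applying the cocoercivity/strong-monotonicity duality once more, now in the reverse direction, converts this into the statement that $\proxcc{L}{B}$ is $\alpha$-cocoercive, which is the claim.

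There is no genuine analytic obstacle here, since the substantive work is already carried out in Proposition~\ref{p:2}\ref{p:2ii}; the only points requiring care are of a bookkeeping nature. First, one must verify that the constants match: the output modulus of Proposition~\ref{p:2}\ref{p:2ii} reads $(\alpha+1)\|L\|^{-2}-1$ in its own notation, and after the substitution $\alpha\rightsquigarrow\beta$ this becomes $(\beta+1)\|L\|^{-2}-1=\alpha$, consistent with the value fixed in the present statement. Second, one must keep the two conflicting conventions straight, since $\alpha$ and $\beta$ exchange the roles of input and output moduli between the two propositions. Finally, it is worth recording that the hypothesis $\|L\|<\sqrt{\beta+1}$ is exactly what guarantees $\alpha>0$, so that the conclusion of $\alpha$-cocoercivity is meaningful.
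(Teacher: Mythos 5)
Your proof is correct and follows essentially the same route as the paper: both arguments pass to $B^{-1}$, which is $\beta$-strongly monotone, show that $(\proxcc{L}{B})^{-1}=\proxc{L}{B^{-1}}$ is $\alpha$-strongly monotone, and dualize back to $\alpha$-cocoercivity. The only cosmetic difference is that you obtain the strong monotonicity of $\proxc{L}{B^{-1}}$ by citing Proposition~\ref{p:2}\ref{p:2ii} under condition \ref{p:2ia-}, whereas the paper inlines exactly the chain (Lemma~\ref{l:1}\ref{l:1iv}, Lemma~\ref{l:9}\ref{l:9i}, and Proposition~\ref{p:1}\ref{p:1i}--\ref{p:1ii}) that constitutes the proof of that proposition.
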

\begin{proof}
Since $B^{-1}$ is $\beta$-strongly monotone, 
Lemma~\ref{l:1}\ref{l:1iv} entails that $J_{B^{-1}}$ is
$(\beta+1)$-cocoercive. In turn, by Lemma~\ref{l:9}\ref{l:9i},
$L^*\circ J_{B^{-1}}\circ L$ is $(\beta+1)\|L\|^{-2}$-cocoercive,
which makes $(L^*\circ J_{B^{-1}}\circ L)^{-1}$ a
$(\beta+1)\|L\|^{-2}$-strongly monotone operator. In view of 
Proposition~\ref{p:1}\ref{p:1ii} and 
Proposition~\ref{p:1}\ref{p:1i}, we conclude that 
\begin{equation} 
(\proxcc{L}{B})^{-1}=\proxc{L}{B^{-1}}=
\big(L^*\circ J_{B^{-1}}\circ L\big)^{-1}-\Id_\HH
\end{equation} 
is $\alpha$-strongly monotone and hence that $\proxcc{L}{B}$ is
$\alpha$-cocoercive. 
\end{proof}

\begin{proposition}
\label{p:8}
Let $L\in\BL(\HH,\GG)$ be such that $\|L\|\leq 1$,
let $D$ be a nonempty subset of $\GG$, and let
$B\colon D\to\GG$ be monotone and nonexpansive.
Then $\proxcc{L}{B}$ is monotone and nonexpansive.
\end{proposition}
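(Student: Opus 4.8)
The plan is to dispatch monotonicity immediately and then concentrate on nonexpansiveness, which will also carry single-valuedness for free. Since $\|L\|\le 1$ and $B$ is monotone, Proposition~\ref{p:2}\ref{p:2ic} already shows that $\proxcc{L}{B}$ is monotone, so nothing new is required there; the entire burden is to prove that $M:=\proxcc{L}{B}$ is nonexpansive. Here nonexpansiveness is to be read as: for all $(x_1,x_1^*),(x_2,x_2^*)\in\gra M$ one has $\|x_1^*-x_2^*\|\le\|x_1-x_2\|$. Taking $x_1=x_2$ then forces $x_1^*=x_2^*$, so single-valuedness is a by-product and need not be argued separately.

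To access the graph of $M$, I would use the resolvent identity $J_{\proxcc{L}{B}}=\Id_\HH-L^*\circ L+L^*\circ J_B\circ L$ from Proposition~\ref{p:1}\ref{p:1ic2}, together with $M=J_M^{-1}-\Id_\HH$. Writing $u_i=x_i+x_i^*$, the inclusion $x_i^*\in Mx_i$ becomes $x_i=J_Mu_i$; setting $v_i=Lu_i$ and $w_i=J_Bv_i$ (so that $v_i=w_i+Bw_i$, $J_B$ being single-valued because $B$ is single-valued and monotone) this unwinds to $x_i^*=L^*(Bw_i)$ and $x_i=u_i-x_i^*$. A short expansion of $\|x_1-x_2\|^2=\|(u_1-u_2)-(x_1^*-x_2^*)\|^2$ shows that the target inequality $\|x_1^*-x_2^*\|\le\|x_1-x_2\|$ is equivalent to $2\scal{u_1-u_2}{x_1^*-x_2^*}\le\|u_1-u_2\|^2$. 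Moving $L$ to the other slot gives $\scal{u_1-u_2}{x_1^*-x_2^*}=\scal{v_1-v_2}{Cv_1-Cv_2}$, where $C:=B\infconv\Id_\GG=\Id_\GG-J_B$ satisfies $Cv_i=Bw_i$.

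The crux is then an inequality purely on the $\GG$ side: I claim that $2\scal{v_1-v_2}{Cv_1-Cv_2}\le\|v_1-v_2\|^2$. Substituting $v_i=w_i+Bw_i$ and $Cv_i=Bw_i$ and expanding both sides, the cross term $2\scal{w_1-w_2}{Bw_1-Bw_2}$ and the term $\|Bw_1-Bw_2\|^2$ cancel, leaving exactly $\|Bw_1-Bw_2\|^2\le\|w_1-w_2\|^2$, which is the nonexpansiveness of $B$. Conceptually, this claim states that $\Id_\GG-2C=2J_B-\Id_\GG$ is monotone, i.e.\ that the reflected resolvent of $B$ is monotone, and this is precisely equivalent to $B$ being nonexpansive. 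Finally, since $\|L\|\le 1$ yields $\|v_1-v_2\|=\|L(u_1-u_2)\|\le\|u_1-u_2\|$, the claim gives $2\scal{u_1-u_2}{x_1^*-x_2^*}=2\scal{v_1-v_2}{Cv_1-Cv_2}\le\|v_1-v_2\|^2\le\|u_1-u_2\|^2$, which closes the argument.

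The main obstacle is conceptual rather than computational: recognizing that the property of $B$ which survives both the conjugation by $L$ (controlled by $\|L\|\le 1$) and the parallel-sum inversion hidden in the cocomposition is exactly the monotonicity of the reflected resolvent $2J_B-\Id_\GG$, equivalently the bound $2\scal{\cdot}{C\cdot}\le\|\cdot\|^2$ for $C=B\infconv\Id_\GG$. Once this invariant is isolated, each remaining step is a routine expansion. It is worth noting that monotonicity of $B$ enters only to guarantee single-valuedness of $J_B$; the nonexpansiveness of $M$ flows entirely from the nonexpansiveness of $B$ and the bound $\|L\|\le 1$.
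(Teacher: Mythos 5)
Your proof is correct, but it runs along a genuinely different track than the paper's. The paper argues structurally: since $B$ is nonexpansive, \cite[Proposition~4.4]{Livre1} and Lemma~\ref{l:0}\ref{l:0ii} furnish a monotone operator $E$ with $B=2J_E-\Id_\GG$; setting $M=\Id_\HH-L^*\circ L+L^*\circ E\circ L$, which is monotone because $\|L\|\leq 1$, a short computation starting from Proposition~\ref{p:1}\ref{p:1ic2} gives $J_{\proxcc{L}{B}}=(2J_M)^{-1}$, hence $\proxcc{L}{B}=2J_M-\Id_\HH$ is nonexpansive as the reflection of the firmly nonexpansive $J_M$. You instead unwind $\gra(\proxcc{L}{B})$ through the same resolvent identity of Proposition~\ref{p:1}\ref{p:1ic2} and verify the Lipschitz inequality by direct inner-product expansion, the crux being $2\scal{v_1-v_2}{Cv_1-Cv_2}\leq\|v_1-v_2\|^2$ for $C=\Id_\GG-J_B$, which --- as you correctly observe --- is precisely the monotonicity of the reflected resolvent $2J_B-\Id_\GG$, equivalent to the nonexpansiveness of $B$. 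So both proofs pivot on the same Minty-type correspondence between nonexpansive maps and monotone operators, but run it in opposite directions: the paper lifts $B$ to the monotone $E$ and reads the conclusion off the closed formula $\proxcc{L}{B}=2J_M-\Id_\HH$, whereas you inline the correspondence as a scalar inequality and never need to invoke the existence of $E$. Your route is more elementary and self-contained (only Proposition~\ref{p:1}\ref{p:1ic2}, the adjoint move, and $\|L\|\leq 1$), and it makes transparent that the monotonicity of $B$ is consumed only by the monotonicity half via Proposition~\ref{p:2}\ref{p:2ic} and by the single-valuedness of $J_B$; the paper's route is shorter and yields the structural by-product that $\proxcc{L}{B}$ is itself the reflected resolvent of an explicitly identified monotone operator. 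All of your individual steps check out, including the cancellation that reduces the key claim to $\|Bw_1-Bw_2\|\leq\|w_1-w_2\|$ and the final chain $2\scal{u_1-u_2}{x_1^*-x_2^*}=2\scal{v_1-v_2}{Cv_1-Cv_2}\leq\|v_1-v_2\|^2\leq\|u_1-u_2\|^2$.
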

\begin{proof}
The monotonicity of $\proxcc{L}{B}$ is established in
Proposition~\ref{p:2}\ref{p:2ic}. Let us show its nonexpansiveness.
Since $B$ is nonexpansive, it follows from
\cite[Proposition~4.4]{Livre1} and Lemma~\ref{l:0}\ref{l:0ii} that
there exists a monotone operator $E\colon\GG\to 2^{\GG}$ such that
$B=2J_E-\Id_\GG$. Now set $M=\Id_\HH-L^*\circ L+L^*\circ E\circ L$.
Since $\|L\|\leq 1$, $\Id_\HH-L^*\circ L$ is monotone, while
$L^*\circ E\circ L$ is monotone by
\cite[Proposition~20.10]{Livre1}. The sum $M$ of these two
operators is therefore monotone, which renders $J_M$ firmly
nonexpansive by Lemma~\ref{l:0}\ref{l:0ii}, and hence
$2J_M-\Id_\HH$ nonexpansive. On the other hand,
Proposition~\ref{p:1}\ref{p:1ic2} yields 
\begin{align}
J_{\proxcc{L}{B}}
&=\Id_\HH-L^*\circ L+L^*\circ(B+\Id_\GG)^{-1}\circ L
\nonumber\\
&=\Id_\HH-L^*\circ L+L^*\circ(2J_E)^{-1}\circ L
\nonumber\\
&=\big(2\Id_\HH-2L^*\circ L+L^*\circ(E+\Id_\GG)\circ L\big)
\circ(\Id_\HH/2)\nonumber\\
&=(\Id_\HH+M)\circ(\Id_\HH/2)
\nonumber\\
&=(2J_M)^{-1}.
\end{align}
We have thus verified that $\proxcc{L}{B}=2J_M-\Id_\HH$ is
nonexpansive.
\end{proof}

\begin{remark}[resolvent average]
\label{r:3}
Consider the setting of Example~\ref{ex:1}, where 
$\sum_{k=1}^p\omega_k=1$, and let $A$ be the resolvent average of 
the operators $(B_k)_{1\leq k\leq p}$ defined in 
\eqref{e:4}. Then, as discussed in Example~\ref{ex:1},
Remark~\ref{r:2}\ref{r:2ii}, and Proposition~\ref{p:1}\ref{p:1iic},
$A=\proxc{L}{B}=\proxcc{L}{B}$, where 
$L\colon x\mapsto (x,\ldots,x)$ is an isometry with adjoint
$L^*\colon(y_k)_{1\leq k\leq p}\mapsto\sum_{k=1}^p\omega_ky_k$ and
$B\colon(y_k)_{1\leq k\leq p}\mapsto 
B_1y_1\times\cdots\times B_py_p$. We can therefore establish at
once from the above results various properties of the resolvent
average, such as the following:
\begin{enumerate}
\item
Proposition~\ref{p:1}\ref{p:1ii} yields
$A^{-1}=\proxc{L}{B^{-1}}=(\sum_{k=1}^p\omega_k
(B_k^{-1}+\Id_\HH)^{-1})^{-1}-\Id_{\HH}$ 
(see \cite[Theorem~2.2]{Baus16}).
\item
Suppose that the operators $(B_k)_{1\leq k\leq p}$ are 
monotone. Then Theorem~\ref{t:1}\ref{t:1i} asserts that $A$ is
maximally monotone if the operators $(B_k)_{1\leq k\leq p}$ are. 
In addition, Proposition~\ref{p:1}\ref{p:1iii} asserts that 
$\dom A\subset\sum_{k=1}^p\omega_k\dom B_k$ and
Proposition~\ref{p:1}\ref{p:1ivc} that
$\ran A\subset\sum_{k=1}^p\omega_k\ran B_k$ 
(see \cite[Proposition~2.7]{Baus16} and note that maximality is not
required in the last two properties).
\item
Suppose that the operators $(B_k)_{1\leq k\leq p}$ are maximally
monotone. Then Theorem~\ref{t:1}\ref{t:1ii}--\ref{t:1iii} yields
$\intdom A=\inte\sum_{k=1}^p\omega_k\,\dom B_k$ and
$\cdom A=\overline{\sum_{k=1}^p\omega_k\,\dom B_k}$, 
while Theorem~\ref{t:1}\ref{t:1ivc}--\ref{t:1vc} yields
$\inte\ran A=\inte\sum_{k=1}^p\omega_k\,\ran B_k$, and 
$\cran A=\overline{\sum_{k=1}^p\omega_k\,\ran B_k}$
(see \cite[Theorem~2.11]{Baus16}).
\item
\label{r:3iv}
Suppose that the operators $(B_k)_{1\leq k\leq p}$ are maximally
monotone and strongly monotone. Then it follows from 
Proposition~\ref{p:2}\ref{p:2ia} that $A$ is strongly
monotone (see \cite[Theorem~3.20]{Baus16}, where the strong
monotonicity of $A$ is established under the more
general assumption that only one of the operators 
$(B_k)_{1\leq k\leq p}$ is strongly monotone).
\item
Suppose that, for every $k\in\{1,\ldots,p\}$,
$B_k\colon\GG_k\to\GG_k$ is monotone and nonexpansive. Then it
follows from Proposition~\ref{p:8} that $A$ is monotone and
nonexpansive (see \cite[Theorem~4.16]{Baus16}).
\end{enumerate}
\end{remark}

\begin{remark}[parametrization]
\label{r:8}
A parameter $\gamma\in\RPP$ can be introduced in 
Definition~\ref{d:1} by putting
\begin{equation}
\label{e:gb}
\proxcg{L}{B}=L^*\pushfwd(B+\gamma^{-1}\Id_{\GG})-
\gamma^{-1}\Id_{\HH}. 
\end{equation}
In the special case of the resolvent average discussed in 
Example~\ref{ex:1}, \eqref{e:gb} leads to the parametrized
version of \eqref{e:4} considered in \cite{Baus16}, namely
$\proxcg{L}{B}=(\sum_{k=1}^p\omega_k(B_k+\gamma^{-1}
\Id_\HH)^{-1})^{-1}-\gamma^{-1}\Id_{\HH}$. In general, 
with the assistance of Lemma~\ref{l:1}\ref{l:1i-} and
Proposition~\ref{p:0}, we obtain 
\begin{equation}
J_{\gamma(\proxcg{L}{B})}=L^*\circ J_{\gamma B}\circ L
=J_{\proxc{L}{(\gamma B)}}. 
\end{equation}
This shows that the
parametrized version \eqref{e:gb} is closely related to the
original one \eqref{e:2} since
$\gamma(\proxcg{L}{B})=\proxc{L}{(\gamma B)}$.
The proximal composition of Definition~\ref{d:2} can be 
parametrized similarly by putting
$\proxcg{L}{g}=((g^*\infconv(\gamma\qq_{\GG}))\circ L)^*
-\gamma^{-1}\qq_{\HH}$.
\end{remark}

\begin{remark}[warping]
\label{r:w}
An extension of Definition~\ref{d:1} can be devised using the
theory of warped resolvents \cite{Jmaa20}. Let $\XX$ and $\YY$ be
reflexive real Banach spaces, let $K_\YY\colon\YY\supset
D_\YY\to\YY^*$, let $L\in\BL(\XX,\YY)$, and let $B\colon\YY\to
2^{\YY^*}$. Then, under suitable conditions, the warped resolvent
of $B$ with kernel $K_\YY$ is $J_B^{K_\YY}=(B+K_\YY)^{-1}\circ
K_\YY$ (for instance, if $h\colon\YY\to\RX$ is a Legendre function
such that $\dom B\subset\intdom h$ and $K_\YY=\nabla h$, then
$J_B^{K_\YY}$ is the $D$-resolvent of $B$ \cite{Sico03}). For a
suitable kernel $K_\XX\colon\XX\supset D_\XX\to\XX^*$, we then
define the \emph{warped resolvent composition}
$\proxc{L}{B}=K_\XX\circ(L^*\pushfwd
(K_\YY^{-1}\circ(B+\KK_\YY)))-K_\XX$, which yields
$J^{K_\XX}_{\proxc{L}{B}}=L^*\circ J^{K_\YY}_B\circ L$.
\end{remark}

\section{The proximal composition}
\label{sec:5}

This section is dedicated to the study of some aspects of the
proximal composition operations introduced in Definition~\ref{d:2}
and further discussed in Examples~\ref{ex:8} and~\ref{ex:7}.

\begin{remark}
\label{r:m}
The proximal composition was linked to the resolvent composition 
in Example~\ref{ex:8}\ref{ex:8ii}. We can also motivate this
construction via Moreau's theory of proximity operators and
envelopes \cite{Mor62b,Mor63a,More65}. Indeed, let
$g\in\Gamma_0(\GG)$, suppose that $L\in\BL(\HH,\GG)$ satisfies
$0<\|L\|\leq 1$, and set $T=L^*\circ\prox_g\circ L$. Then $T$ is
nonexpansive since $\prox_g$ and $L$ are. On the other hand, we
infer from Lemma~\ref{l:4}\ref{l:4i} that
$T=L^*\circ\nabla(g^*\infconv\qq_\GG)\circ L
=\nabla((g^*\infconv\qq_\GG)\circ L)$. Altogether, Lemma~\ref{l:5}
implies that $T=\prox_{f}$, where 
$f=((g^*\infconv\qq_\GG)\circ L)^*-\qq_\HH$. 
The function $f$ is precisely the proximal
composition $\proxc{L}{g}$. Thus, up to an additive constant,
$\proxc{L}{g}$ is the function the proximity operator of which is
$L^*\circ\prox_g\circ L$.
\end{remark}

Let us now establish some properties of proximal compositions.

\begin{proposition}
\label{p:14}
Let $L\in\BL(\HH,\GG)$, let $g\colon\GG\to\RX$ and 
$h\colon\GG\to\RX$ be proper functions such that $h\leq g$, and 
let $\breve{g}$ be the largest lower semicontinuous convex
function majorized by $g$. Then the following hold:
\begin{enumerate}
\item
\label{p:14i}
$\proxc{L}{h}\leq\proxc{L}{g}$.
\item
\label{p:14ii}
Suppose that $h\geq\breve{g}$ and that $g$ admits a continuous 
affine minorant. Then $\proxc{L}{g}=\proxc{L}{h}$.
\item
\label{p:14iii}
Suppose that $g$ admits a continuous affine minorant. Then 
$\proxc{L}{g}=\proxc{L}{g^{**}}$.
\end{enumerate}
\end{proposition}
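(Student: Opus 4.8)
The plan is to prove the three claims in the natural logical order: first the monotonicity statement~\ref{p:14i}, then use it together with the definition of $\breve g$ to obtain \ref{p:14ii}, and finally recognize \ref{p:14iii} as the special case $h=g^{**}$ of \ref{p:14ii}.

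**For \ref{p:14i},** I would unfold Definition~\ref{d:2}: $\proxc{L}{g}=((g^*\infconv\qq_{\GG})\circ L)^*-\qq_{\HH}$. Since the subtracted term $\qq_{\HH}$ does not depend on $g$, it suffices to compare the functions $((g^*\infconv\qq_{\GG})\circ L)^*$ and $((h^*\infconv\qq_{\GG})\circ L)^*$. The hypothesis $h\leq g$ reverses under conjugation to $g^*\leq h^*$; infimal convolution with the fixed function $\qq_{\GG}$ is order-preserving, so $g^*\infconv\qq_{\GG}\leq h^*\infconv\qq_{\GG}$; precomposition with $L$ preserves this inequality, giving $(g^*\infconv\qq_{\GG})\circ L\leq(h^*\infconv\qq_{\GG})\circ L$; and a second conjugation reverses it once more, yielding $((h^*\infconv\qq_{\GG})\circ L)^*\leq((g^*\infconv\qq_{\GG})\circ L)^*$. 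Subtracting $\qq_{\HH}$ from both sides gives $\proxc{L}{h}\leq\proxc{L}{g}$.

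**For \ref{p:14ii},** the key observation is that the proximal composition is governed only by $g^*$, and that $g^*=\breve g^{\,*}$ whenever $g$ admits a continuous affine minorant. Indeed, $\breve g$ is the biconjugate $g^{**}$ (the largest lower semicontinuous convex minorant), which lies in $\Gamma_0(\GG)$ by Lemma~\ref{l:7}\ref{l:7ii}, and conjugation satisfies $\breve g^{\,*}=g^{***}=g^*$. The chain of hypotheses $\breve g\leq h\leq g$ combined with \ref{p:14i} gives $\proxc{L}{\breve g}\leq\proxc{L}{h}\leq\proxc{L}{g}$, so it is enough to show $\proxc{L}{\breve g}=\proxc{L}{g}$. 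But since $\breve g^{\,*}=g^*$, the two functions produce the same $g^*$ inside formula~\eqref{e:12}, hence the same proximal composition; the three terms in the sandwich collapse to a single value and the claim follows.

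**For \ref{p:14iii},** I would simply take $h=g^{**}=\breve g$ in \ref{p:14ii}. Under the standing assumption that $g$ admits a continuous affine minorant, $g^{**}\in\Gamma_0(\GG)$ and $g^{**}\geq\breve g$ holds with equality, so both hypotheses of \ref{p:14ii} are met and $\proxc{L}{g}=\proxc{L}{g^{**}}$ is immediate. **The main obstacle** is the bookkeeping in \ref{p:14ii}: one must be careful that the continuous affine minorant hypothesis is exactly what guarantees $g^*$ is proper (so that $g^{**}$ is a genuine $\Gamma_0$ function and equals $\breve g$ rather than the constant $-\infty$), which is what makes the identity $\breve g^{\,*}=g^*$ valid and keeps all the conjugates well defined.
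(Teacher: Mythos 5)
Your proposal is correct and follows essentially the same route as the paper: part \ref{p:14i} is the identical conjugation/order-reversal argument, and parts \ref{p:14ii}--\ref{p:14iii} rest, as in the paper, on the observation that $\proxc{L}{g}$ depends on $g$ only through $g^*$ together with the properness of $\breve{g}$ secured by the affine minorant. The only cosmetic difference is that where the paper invokes \cite[Proposition~13.16]{Livre1} to get $h^*=g^*$ directly from the sandwich $\breve{g}\leq h\leq g$, you re-derive the same conclusion via $\breve{g}^{\,*}=g^{***}=g^*$ and the monotonicity of \ref{p:14i}, which is a valid equivalent.
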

\begin{proof}
\ref{p:14i}:
In view of \eqref{e:conj} and \eqref{e:infconv1}, $h^*\geq g^*$ 
and hence $h^*\infconv\qq_\GG\geq g^*\infconv\qq_\GG$. Thus,
$(h^*\infconv\qq_\GG)\circ L\geq(g^*\infconv\qq_\GG)\circ L$ and
therefore $((h^*\infconv\qq_\GG)\circ L)^*\leq
((g^*\infconv\qq_\GG)\circ L)^*$. Appealing to \eqref{e:12}, we
conclude that $\proxc{L}{h}\leq\proxc{L}{g}$.

\ref{p:14ii}: Let $a$ be a continuous affine minorant of $g$. Then
$\minf<a=\breve{a}\leq\breve{g}\leq g\not\equiv\pinf$ and 
$\breve{g}$ is therefore proper. In addition, 
$\breve{g}\leq h\leq g$. Hence, \cite[Proposition~13.16]{Livre1}
yields $h^*=g^*$ and the conclusion follows from \eqref{e:12}.

\ref{p:14iii}: Since $\breve{g}=g^{**}$
\cite[Proposition~13.45]{Livre1}, the assertion follows from
\ref{p:14ii}.
\end{proof}

\begin{proposition}
\label{p:12}
Suppose that $L\in\BL(\HH,\GG)$ satisfies $0<\|L\|\leq 1$ and let
$g\colon\GG\to\RX$ be a proper function that admits a
continuous affine minorant. Then the following hold:
\begin{enumerate}
\item
\label{p:12i}
$\proxc{L}{g}=L^*\epushfwd(g^{**}+\qq_{\GG})-\qq_{\HH}$.
\item
\label{p:12ii}
$\dom(\proxc{L}{g})=L^*(\dom g^{**})$.
\item
\label{p:12iii}
$(\proxc{L}{g})^*=(\qq_{\HH}-(g^*\infconv\qq_{\GG})\circ L)^*
-\qq_{\HH}$.
\item
\label{p:12iv}
$(\proxc{L}{g})^*=\proxcc{L}{g^*}$.
\item
\label{p:12v}
$(\proxcc{L}{g})^*=\proxc{L}{g^*}$.
\item
\label{p:12vii}
$(\proxc{L}{g})\infconv\qq_\HH+(\proxcc{L}{g^*})\infconv\qq_\HH
=\qq_\HH$.
\item
\label{p:12vi}
Suppose that $L$ is an isometry. Then 
$\proxc{L}{g}=\proxcc{L}{g}$.
\end{enumerate}
\end{proposition}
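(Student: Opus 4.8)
The backbone is part~\ref{p:12i}, from which the domain formula~\ref{p:12ii} is read off and on which the isometry case~\ref{p:12vi} ultimately rests. The plan is to set $\psi=g^{**}+\qq_\GG$ and first record, via Lemma~\ref{l:4}\ref{l:4i+} together with $g^{***}=g^*$, that $g^*\infconv\qq_\GG=\psi^*$; this rewrites \eqref{e:12} as $\proxc{L}{g}=(\psi^*\circ L)^*-\qq_\HH$. The engine is the elementary conjugation rule for infimal postcomposition: for every $h\in\Gamma_0(\GG)$ one has $(L^*\pushfwd h)^*=h^*\circ L$, which follows by expanding the supremum defining the conjugate and using $\scal{L^*y}{x^*}=\scal{y}{Lx^*}$ (no qualification condition is needed). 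Applying this with $h=\psi$ gives $(L^*\pushfwd\psi)^*=\psi^*\circ L$, whence $(\psi^*\circ L)^*=(L^*\pushfwd\psi)^{**}$. It then remains to identify $(L^*\pushfwd\psi)^{**}$ with $L^*\pushfwd\psi$ and to promote $\pushfwd$ to $\epushfwd$.

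This identification is the crux and the main obstacle. It is handled by the $1$-strong convexity of $\psi$ (inherited from $\qq_\GG$), hence its supercoercivity: over each nonempty closed affine subspace $(L^*)^{-1}\{x\}$ the restriction of $\psi$ is coercive and lower semicontinuous, so the infimum defining $(L^*\pushfwd\psi)(x)$ is attained at a unique point. This yields exactness (the passage to $\epushfwd$) and, together with convexity, membership of $L^*\pushfwd\psi$ in $\Gamma_0(\HH)$; the latter forces $(L^*\pushfwd\psi)^{**}=L^*\pushfwd\psi$. I would cite the exactness and lower semicontinuity of infimal postcomposition under supercoercivity from \cite{Livre1}. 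Subtracting $\qq_\HH$ delivers \ref{p:12i}, and \ref{p:12ii} is then immediate: since the infimum in $L^*\epushfwd(g^{**}+\qq_\GG)$ is attained, its domain is exactly $L^*(\dom(g^{**}+\qq_\GG))=L^*(\dom g^{**})$, and adding the everywhere-finite $-\qq_\HH$ does not alter it.

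For \ref{p:12iii} I would isolate the quadratic-conjugation identity $(\phi-\qq_\HH)^*=(\qq_\HH-\phi^*)^*-\qq_\HH$, valid whenever $\phi\in\Gamma_0(\HH)$ and $\phi-\qq_\HH\in\Gamma_0(\HH)$. Writing $f=\phi-\qq_\HH$, so $\phi=f+\qq_\HH$, Lemma~\ref{l:4}\ref{l:4i+} gives $\phi^*=f^*\infconv\qq_\HH$, then Lemma~\ref{l:4}\ref{l:4ii} applied to $f^*$ (with $f^{**}=f$) yields $\qq_\HH-\phi^*=f\infconv\qq_\HH$, and a final application of Lemma~\ref{l:4}\ref{l:4i+} gives $(\qq_\HH-\phi^*)^*=f^*+\qq_\HH$, i.e.\ the claimed identity. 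Taking $\phi=((g^*\infconv\qq_{\GG})\circ L)^*$, so that $\phi^*=(g^*\infconv\qq_{\GG})\circ L$ by Lemma~\ref{l:7}\ref{l:7ii}, and recalling $f=\proxc{L}{g}\in\Gamma_0(\HH)$ from Example~\ref{ex:8}\ref{ex:8i}, proves \ref{p:12iii}. Items \ref{p:12iv} and \ref{p:12v} are then bookkeeping: by Definition~\ref{d:2}, $\proxcc{L}{g^*}=(\proxc{L}{g^{**}})^*$, and Proposition~\ref{p:14}\ref{p:14iii} gives $\proxc{L}{g^{**}}=\proxc{L}{g}$, whence \ref{p:12iv}; dually, $(\proxcc{L}{g})^*=(\proxc{L}{g^*})^{**}=\proxc{L}{g^*}$, the last step because $\proxc{L}{g^*}\in\Gamma_0(\HH)$ by Example~\ref{ex:8}\ref{ex:8i} applied to $g^*$. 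Finally \ref{p:12vii} follows from the Moreau identity Lemma~\ref{l:4}\ref{l:4ii} for $f=\proxc{L}{g}$ upon replacing $f^*$ by $\proxcc{L}{g^*}$ via \ref{p:12iv}.

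The isometry case \ref{p:12vi} is where the pieces combine, and the key observation is that an isometry satisfies $\qq_\GG\circ L=\qq_\HH$. Put $v=g^{**}\infconv\qq_\GG$; Lemma~\ref{l:4}\ref{l:4ii} gives $g^*\infconv\qq_\GG=\qq_\GG-v$, so that $\proxc{L}{g}=((\qq_\GG-v)\circ L)^*-\qq_\HH=(\qq_\HH-v\circ L)^*-\qq_\HH$, using $\qq_\GG\circ L=\qq_\HH$. On the other side, $\proxc{L}{g^*}=(v\circ L)^*-\qq_\HH$, and applying the identity of \ref{p:12iii} with $\phi=(v\circ L)^*$ (noting $v\circ L\in\Gamma_0(\HH)$, hence $\phi^*=v\circ L$) gives $\proxcc{L}{g}=(\proxc{L}{g^*})^*=(\qq_\HH-v\circ L)^*-\qq_\HH$. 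The two expressions coincide, proving \ref{p:12vi}. I expect the only genuinely delicate point in the whole proposition to be the $\Gamma_0$-membership and exactness step in \ref{p:12i}; everything else is conjugate calculus built on Lemma~\ref{l:4} and the definitions.
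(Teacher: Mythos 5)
Your proof is correct, and on most items it tracks the paper's own argument closely: \ref{p:12iv} via Proposition~\ref{p:14}\ref{p:14iii} and Definition~\ref{d:2}, \ref{p:12v} via biconjugation of $\proxc{L}{g^*}\in\Gamma_0(\HH)$ (Example~\ref{ex:8}\ref{ex:8i} applied to $g^*$), \ref{p:12vii} via Moreau's decomposition Lemma~\ref{l:4}\ref{l:4ii} combined with \ref{p:12iv}, and \ref{p:12vi} by essentially the paper's computation (write $g^*\infconv\qq_\GG=\qq_\GG-g^{**}\infconv\qq_\GG$ by Lemma~\ref{l:4}\ref{l:4ii}, use $\qq_\GG\circ L=\qq_\HH$, then apply the \ref{p:12iii}-identity to $g^*$). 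There are two genuine local differences. For \ref{p:12i}, the paper simply invokes the exact Fenchel composition rule \cite[Corollary~15.28(i)]{Livre1} --- applicable because $\dom(g^*\infconv\qq_\GG)=\GG$ --- together with Lemma~\ref{l:4}\ref{l:4i+}; you instead prove that instance from scratch: the unconditional conjugation rule $(L^*\pushfwd\psi)^*=\psi^*\circ L$ (this is \cite[Proposition~13.24(iv)]{Livre1}, which the paper itself uses in the proof of Theorem~\ref{t:4}\ref{t:4iv}), then biconjugation, with the $1$-strong convexity and hence supercoercivity of $\psi=g^{**}+\qq_\GG$ securing exactness and $L^*\pushfwd\psi\in\Gamma_0(\HH)$. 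This is sound and arguably more self-contained, with one caveat you should make explicit: fiberwise attainment of the infimum does \emph{not} by itself yield lower semicontinuity of the marginal function $L^*\pushfwd\psi$, so the $\Gamma_0$-membership genuinely rests on the supercoercivity result you propose to cite (equivalently, on the bounded-sublevel-set/weak-compactness argument it encodes: if $x_n\to x$ with values $\leq c$, the fiber minimizers $y_n$ are bounded, and a weak cluster point lies on the fiber over $x$ with $\psi$-value $\leq c$); as written, your sentence slightly conflates attainment with closedness. For \ref{p:12iii}, the paper cites \cite[Proposition~13.29]{Livre1}, whereas you re-derive exactly that identity $(\phi-\qq_\HH)^*=(\qq_\HH-\phi^*)^*-\qq_\HH$ from Lemma~\ref{l:4}\ref{l:4i+} and Lemma~\ref{l:4}\ref{l:4ii}; your hypotheses ($\phi\in\Gamma_0(\HH)$ and $\phi-\qq_\HH=\proxc{L}{g}\in\Gamma_0(\HH)$, the latter from Example~\ref{ex:8}\ref{ex:8i}) are verified, so this is a correct substitution that trades a citation for three lines of Moreau calculus. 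A final minor remark: for \ref{p:12ii} attainment is not needed, since $\dom(L^*\pushfwd\psi)=L^*(\dom\psi)$ holds for any proper $\psi$ (the paper cites \cite[Proposition~12.36(i)]{Livre1}), so your appeal to exactness there is harmless overkill.
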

\begin{proof}
By Example~\ref{ex:8}\ref{ex:8i-} and Lemma~\ref{l:7}\ref{l:7i},
$g^*$ is in $\Gamma_0(\GG)$ and it admits a continuous affine
minorant. In turn, we deduce from Lemma~\ref{l:4}\ref{l:4i-} that
$g^*\infconv\qq_{\GG}\in\Gamma_0(\GG)$ and hence that
$(g^*\infconv\qq_{\GG})\circ L\in\Gamma_0(\HH)$. We then deduce
from Lemma~\ref{l:7}\ref{l:7ii} that
$((g^*\infconv\qq_{\GG})\circ L)^*\in\Gamma_0(\HH)$.

\ref{p:12i}: Since $\dom(g^*\infconv\qq_{\GG})=\GG$ and
$g^*\infconv\qq_{\GG}\in\Gamma_0(\GG)$, it follows 
from \cite[Corollary~15.28(i)]{Livre1} and
Lemma~\ref{l:4}\ref{l:4i+} that $\proxc{L}{g}+\qq_{\HH}
=((g^*\infconv\qq_{\GG})\circ L)^*
=L^*\epushfwd(g^*\infconv \qq_{\GG})^*= 
L^*\epushfwd(g^{**}+\qq_{\GG})$.

\ref{p:12ii}: We invoke \ref{p:12i} and 
\cite[Proposition~12.36(i)]{Livre1} to get 
$\dom(\proxc{L}{g})=\dom(L^*\epushfwd(g^{**}+\qq_{\GG}))=
L^*(\dom(g^{**}+\qq_{\GG}))=L^*(\dom g^{**})$.

\ref{p:12iii}:
Since $((g^*\infconv\qq_{\GG})\circ L)^*\in\Gamma_0(\HH)$,
it follows from Definition~\ref{d:2} and
\cite[Proposition~13.29]{Livre1} that
\begin{align}
(\proxc{L}{g})^*
&=\Big(\big((g^*\infconv\qq_{\GG})\circ L\big)^*-
\qq_{\HH}\Big)^*\nonumber\\
&=\Big(\qq_{\HH}-\big((g^*\infconv\qq_{\GG})
\circ L\big)^{**}\Big)^*-\qq_{\HH}
\nonumber\\
&=\big(\qq_{\HH}-(g^*\infconv\qq_{\GG})\circ L\big)^*-\qq_{\HH}.
\end{align}

\ref{p:12iv}:
Proposition~\ref{p:14}\ref{p:14iii} yields 
$\proxcc{L}{g^*}=(\proxc{L}{g^{**}})^*=(\proxc{L}{g})^*$.

\ref{p:12v}: Example~\ref{ex:8}\ref{ex:8i-}--\ref{ex:8i} implies
that $\proxc{L}{g^*}\in\Gamma_0(\GG)$. In turn,
Lemma~\ref{l:7}\ref{l:7ii} yields 
$(\proxcc{L}{g})^*=(\proxc{L}{g^*})^{**}=\proxc{L}{g^*}$.

\ref{p:12vii}: Combine Example~\ref{ex:8}\ref{ex:8i}, 
Lemma~\ref{l:4}\ref{l:4ii}, and \ref{p:12iv}.

\ref{p:12vi}: Since $\qq_\HH=\qq_\GG\circ L$, we derive from 
Lemma~\ref{l:4}\ref{l:4ii} and \ref{p:12iii} that 
\begin{align}
\proxc{L}{g}
&=\big((g^*\infconv\qq_{\GG})\circ L\big)^*-\qq_{\HH},
\nonumber\\
&=\big((\qq_{\GG}-g^{**}\infconv\qq_{\GG})\circ L\big)^*-\qq_{\HH}
\nonumber\\
&=\big(\qq_{\HH}-(g^{**}\infconv\qq_{\GG})\circ L\big)^*-\qq_{\HH}
\nonumber\\
&=(\proxc{L}{g^*})^*\nonumber\\
&=\proxcc{L}{g},
\end{align}
as claimed. 
\end{proof}

The next result concerns the case when $L$ is an isometry.

\begin{proposition}
\label{p:13}
Suppose that $L\in\BL(\HH,\GG)$ is an isometry and let
$g\colon\GG\to\RX$ be a proper function that admits a continuous
affine minorant. Then 
$(g^*\circ L)^*\leq\proxc{L}{g}\leq g\circ L$.
\end{proposition}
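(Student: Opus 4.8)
The plan is to move from the conjugate-based definition \eqref{e:12} to the postcomposition form furnished by Proposition~\ref{p:12}\ref{p:12i}, which renders both inequalities transparent. Since $L$ is an isometry, $L^*L=\Id_\HH$ and $\|L^*\|=\|L\|=1$, so $\qq_\HH=\qq_\GG\circ L$ and $L^*$ is a surjective contraction. Because $g$ is proper with a continuous affine minorant, Example~\ref{ex:8}\ref{ex:8i-} gives $g^*\in\Gamma_0(\GG)$, Lemma~\ref{l:7}\ref{l:7ii} then yields $g^{**}\in\Gamma_0(\GG)$, and in general $g^{**}\le g$. Proposition~\ref{p:12}\ref{p:12i} rewrites the object of interest as
\[
(\forall x\in\HH)\quad \proxc{L}{g}(x)+\qq_\HH(x)
=\inf_{\substack{y\in\GG\\ L^*y=x}}\big(g^{**}(y)+\qq_\GG(y)\big),
\]
and, since $L^*$ is onto (for instance $y=Lx$ is always feasible because $L^*Lx=x$), the feasible set is never empty.

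For the upper bound I would simply test the infimum with $y=Lx$. This gives $\proxc{L}{g}(x)+\qq_\HH(x)\le g^{**}(Lx)+\qq_\GG(Lx)=g^{**}(Lx)+\qq_\HH(x)$, whence $\proxc{L}{g}\le g^{**}\circ L\le g\circ L$ after using $g^{**}\le g$.

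For the lower bound the idea is to discard the quadratic term and then recognize a conjugate. First, for every feasible $y$ one has $\qq_\HH(x)=\|L^*y\|^2/2\le\|y\|^2/2=\qq_\GG(y)$ because $\|L^*\|\le1$; therefore $g^{**}(y)+\qq_\GG(y)-\qq_\HH(x)\ge g^{**}(y)$, and taking the infimum yields $\proxc{L}{g}(x)\ge\inf_{L^*y=x}g^{**}(y)$. Second, I would bound this infimum below by $(g^*\circ L)^*(x)$: for any $y$ with $L^*y=x$ and any $u\in\HH$, the adjoint relation gives $\scal{x}{u}=\scal{L^*y}{u}=\scal{y}{Lu}$, so $\scal{x}{u}-g^*(Lu)=\scal{y}{Lu}-g^*(Lu)\le\sup_{v\in\GG}(\scal{y}{v}-g^*(v))=g^{**}(y)$; taking the supremum over $u$ gives $(g^*\circ L)^*(x)\le g^{**}(y)$, and then the infimum over feasible $y$ gives $(g^*\circ L)^*(x)\le\inf_{L^*y=x}g^{**}(y)\le\proxc{L}{g}(x)$. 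Combining the two bounds delivers the claim.

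The routine part is the upper bound; the main obstacle is the lower bound, specifically the clean separation of the argument into dropping the quadratic (which uses only that $L^*$ is nonexpansive, a consequence of the isometry) followed by the passage $\inf_{L^*y=x}g^{**}(y)\ge(g^*\circ L)^*(x)$. The latter is the one place where the adjoint identity $\scal{x}{u}=\scal{y}{Lu}$ for feasible $y$ is essential; it is precisely the elementary inequality underlying $(g^*\circ L)^*\le L^*\pushfwd g^{**}$, which I prefer to verify directly rather than invoke a general composition--conjugation theorem.
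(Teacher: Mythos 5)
Your proof is correct, and while your upper bound coincides with the paper's (both rest on the representation of Proposition~\ref{p:12}\ref{p:12i} and test the infimum at the feasible point $y=Lx$, using $L^*Lx=x$ and $\qq_\GG(Lx)=\qq_\HH(x)$), your lower bound takes a genuinely different route. The paper argues by duality: it applies the just-established upper bound to $g^*$ (legitimate since $g^*\in\Gamma_0(\GG)$ admits a continuous affine minorant), conjugates to get $(\proxc{L}{g^*})^*\geq(g^*\circ L)^*$, and then chains $\proxc{L}{g}\geq\proxc{L}{g^{**}}=(\proxc{L}{g^*})^*$ via the monotonicity of Proposition~\ref{p:14}\ref{p:14i} and the conjugation/isometry identities of Proposition~\ref{p:12}\ref{p:12iv} and \ref{p:12vi}. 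You instead stay on the primal side: you discard the nonnegative quantity $\qq_\GG(y)-\qq_\HH(L^*y)$, which is valid because $\|L^*\|=\|L\|\leq 1$, and then verify by a direct Fenchel--Young computation that $(g^*\circ L)^*\leq L^*\pushfwd g^{**}$, the key identity $\scal{x}{u}=\scal{y}{Lu}$ for feasible $y$ doing exactly the work you say it does (extended-value issues are harmless since $g^{**}\in\Gamma_0(\GG)$ never takes the value $-\infty$). What each approach buys: the paper's argument is shorter given the machinery of Section~\ref{sec:5} already in place and displays the self-dual structure of the proximal composition under isometries; your argument is elementary and self-contained, and it has the bonus of revealing that the lower bound $(g^*\circ L)^*\leq\proxc{L}{g}$ actually holds whenever $0<\|L\|\leq 1$ --- the isometry assumption is needed only for the upper bound, through the feasibility of $y=Lx$ and the equality $\qq_\GG\circ L=\qq_\HH$.
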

\begin{proof}
We recall from Example~\ref{ex:8}\ref{ex:8i} that 
$\proxc{L}{g}\in\Gamma_0(\HH)$.
Fix $x\in\HH$ and recall that $g^{**}\leq g$ 
\cite[Proposition~13.16(i)]{Livre1}. By 
Proposition~\ref{p:12}\ref{p:12i},
\begin{equation}
\label{e:g67}
(\proxc{L}{g})(x)
=\inf_{\substack{y\in\GG\\ L^*y=x}}
\big(g^{**}(y)+\qq_{\GG}(y)\big)-\qq_{\HH}(x)
\leq\inf_{\substack{y\in\GG\\ L^*y=x}}
\big(g(y)+\qq_{\GG}(y)\big)-\qq_{\HH}(x).
\end{equation}
Now set $y=Lx$. Then $L^*y=L^*(Lx)=x$ and 
$\qq_{\GG}(Lx)=\qq_{\HH}(x)$. Therefore, \eqref{e:g67} yields
\begin{equation}
\label{e:g68}
(\proxc{L}{g})(x)\leq g(Lx)+\qq_{\GG}(Lx)-\qq_{\HH}(x)
=(g\circ L)(x),
\end{equation}
which provides the second inequality. To prove the first one, we
recall from Example~\ref{ex:8}\ref{ex:8i-} that 
$g^*\in\Gamma_0(\GG)$. Therefore, $g^*$ admits a continuous
affine minorant by Lemma~\ref{l:7}\ref{l:7i}. In turn,
\eqref{e:g68} yields $\proxc{L}{g^*}\leq g^*\circ L$ and hence
$(\proxc{L}{g^*})^*\geq(g^*\circ L)^*$. We then invoke 
successively Proposition~\ref{p:14}\ref{p:14i},
Proposition~\ref{p:12}\ref{p:12iv}, and
Proposition~\ref{p:12}\ref{p:12vi} to obtain
\begin{equation}
\proxc{L}{g}\geq\proxc{L}{g^{**}}=\big(\proxc{L}{g^*}\big)^*\geq
\big(g^*\circ L\big)^*,
\end{equation}
as announced.
\end{proof}

Let us take a closer look at the proximal composition for functions
in $\Gamma_0(\GG)$.

\begin{theorem}
\label{t:4}
Suppose that $L\in\BL(\HH,\GG)$ satisfies $0<\|L\|\leq 1$ and let
$g\in\Gamma_0(\GG)$. Then the following hold:
\begin{enumerate}
\item
\label{t:4i}
$\proxc{L}{g}=L^*\epushfwd(g+\qq_{\GG})-\qq_{\HH}$.
\item
\label{t:4ii}
$\dom(\proxc{L}{g})=L^*(\dom g)$.
\item
\label{t:4vi}
$\Argmin(\proxc{L}{g})=\Fix(L^*\circ\prox_g\circ L)$.
\item
\label{t:4iv}
$(\proxc{L}{g})\infconv\qq_{\HH}=\qq_{\HH}-
(g^{*}\infconv\qq_{\GG})\circ L$.
\item
\label{t:4vii}
$(\proxcc{L}{g})\infconv\qq_\HH=(g\infconv\qq_\GG)\circ L$.
\item
\label{t:4iii}
$L^{-1}(\Argmin g)\subset\Argmin(\proxcc{L}{g})
=\Argmin((g\infconv\qq_\GG)\circ L)$.
\end{enumerate}
\end{theorem}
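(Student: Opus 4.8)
The plan is to exploit the identity $g=g^{**}$ (Lemma~\ref{l:7}\ref{l:7ii}), which reduces most assertions to specializations of results already obtained for proper functions admitting a continuous affine minorant. Items \ref{t:4i} and \ref{t:4ii} are then immediate: substituting $g^{**}=g$ into Proposition~\ref{p:12}\ref{p:12i} and Proposition~\ref{p:12}\ref{p:12ii} gives $\proxc{L}{g}=L^*\epushfwd(g+\qq_\GG)-\qq_\HH$ and $\dom(\proxc{L}{g})=L^*(\dom g)$. For the minimizer identity \ref{t:4vi}, I would first recall that $\proxc{L}{g}\in\Gamma_0(\HH)$ by Example~\ref{ex:8}\ref{ex:8i}, so that Fermat's rule \eqref{e:fermat} yields $\Argmin(\proxc{L}{g})=\Fix\prox_{\proxc{L}{g}}$; Example~\ref{ex:8}\ref{ex:8iii} then identifies $\prox_{\proxc{L}{g}}=L^*\circ\prox_g\circ L$, which closes the argument.

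The two Moreau-envelope identities are obtained by conjugate bookkeeping. For \ref{t:4iv}, I would rewrite \eqref{e:12} as $\proxc{L}{g}+\qq_\HH=((g^*\infconv\qq_\GG)\circ L)^*$, apply the identity $(f+\qq_\HH)^*=f^*\infconv\qq_\HH$ of Lemma~\ref{l:4}\ref{l:4i+} with $f=\proxc{L}{g}\in\Gamma_0(\HH)$, and biconjugate the right-hand side (which lies in $\Gamma_0(\HH)$) to deduce $(\proxc{L}{g})^*\infconv\qq_\HH=(g^*\infconv\qq_\GG)\circ L$. Feeding this into the Moreau decomposition $f\infconv\qq_\HH+f^*\infconv\qq_\HH=\qq_\HH$ of Lemma~\ref{l:4}\ref{l:4ii} yields exactly \ref{t:4iv}. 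Item \ref{t:4vii} then follows by applying \ref{t:4iv} to $g^*$ (so its right-hand side becomes $\qq_\HH-(g\infconv\qq_\GG)\circ L$) and combining this with $(\proxcc{L}{g})^*=\proxc{L}{g^*}$ from Proposition~\ref{p:12}\ref{p:12v} through Lemma~\ref{l:4}\ref{l:4ii} applied to $f=\proxcc{L}{g}\in\Gamma_0(\HH)$ (Example~\ref{ex:7}\ref{ex:7i}).

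For \ref{t:4iii}, the crux is an auxiliary fact not isolated earlier: for every $f\in\Gamma_0$ one has $\Argmin(f\infconv\qq)=\Argmin f$ and $\min(f\infconv\qq)=\min f$. The first equality I would obtain from $\nabla(f\infconv\qq)=\prox_{f^*}=\Id-\prox_f$ (apply Lemma~\ref{l:4}\ref{l:4i} to $f^*$ and use Lemma~\ref{l:4}\ref{l:4iii}), combined with Fermat's rule applied to the differentiable convex function $f\infconv\qq$ and with \eqref{e:fermat}; the second is immediate since $\qq\geq 0$ forces $f\infconv\qq\geq\min f$, while equality is attained at any minimizer of $f$. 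Applying the first equality to $f=\proxcc{L}{g}$ and invoking \ref{t:4vii} gives $\Argmin(\proxcc{L}{g})=\Argmin((g\infconv\qq_\GG)\circ L)$. Finally, for the inclusion, if $Lx\in\Argmin g$ then $(g\infconv\qq_\GG)(Lx)=\min g=\min(g\infconv\qq_\GG)\leq(g\infconv\qq_\GG)(Lx')$ for every $x'\in\HH$, whence $x\in\Argmin((g\infconv\qq_\GG)\circ L)=\Argmin(\proxcc{L}{g})$.

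I expect the main obstacle to be not any single deep step but the envelope-preserves-minimizers lemma in the last paragraph, which must be set up carefully, together with the precise tracking of conjugates in \ref{t:4iv}; both then feed all subsequent items mechanically.
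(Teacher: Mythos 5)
Your proposal is correct, and for items \ref{t:4i}, \ref{t:4ii}, \ref{t:4vi}, and \ref{t:4vii} it coincides with the paper's own proof (reduction to Proposition~\ref{p:12}\ref{p:12i}--\ref{p:12ii} via $g=g^{**}$, then Example~\ref{ex:8}\ref{ex:8iii} with \eqref{e:fermat}, then the Moreau-decomposition chain through Proposition~\ref{p:12}\ref{p:12v}). Item \ref{t:4iv} is a harmless variant: the paper passes through \ref{t:4i} and conjugates the infimal postcomposition, $\big(L^*\epushfwd(g+\qq_\GG)\big)^*=(g+\qq_\GG)^*\circ L$, citing \cite[Proposition~13.24(iv)]{Livre1}, whereas you biconjugate the defining identity \eqref{e:12} directly; both routes land on the same key identity $(\proxc{L}{g})^*\infconv\qq_\HH=(g^*\infconv\qq_\GG)\circ L$, and your biconjugation step is licit since $(g^*\infconv\qq_\GG)\circ L$ is real-valued, continuous, and convex, hence in $\Gamma_0(\HH)$. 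The genuinely different step is \ref{t:4iii}: the paper proves the inclusion $L^{-1}(\Argmin g)\subset\Argmin(\proxcc{L}{g})$ at the operator level, combining \eqref{e:pierre} with Proposition~\ref{p:1}\ref{p:1vii} applied to $B=\partial g$ and the subdifferential identity of Example~\ref{ex:7}\ref{ex:7ii}, and then cites \cite[Proposition~17.5]{Livre1} for $\Argmin(\proxcc{L}{g})=\Argmin((\proxcc{L}{g})\infconv\qq_\HH)$; you instead stay entirely at the function level, re-deriving that cited fact from $\nabla(f\infconv\qq_\HH)=\Id_\HH-\prox_f$ (Lemma~\ref{l:4}\ref{l:4i} applied to $f^*$ plus Lemma~\ref{l:4}\ref{l:4iii}) together with \eqref{e:fermat}, and obtaining the inclusion by the direct evaluation $(g\infconv\qq_\GG)(Lx)=\min g$ at a minimizer. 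Your route is more self-contained and bypasses the monotone-operator machinery, while the paper's reuses its already-established operator results, consistent with its theme that proximal compositions are function-level shadows of resolvent compositions. One cosmetic point: in general $f\infconv\qq_\HH\geq\inf f$ rather than $\geq\min f$, but this is harmless since you invoke it only where $\Argmin g\neq\emp$.
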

\begin{proof}
We recall from Lemma~\ref{l:7}\ref{l:7i} that $g$ admits a
continuous affine minorant and from Example~\ref{ex:8}\ref{ex:8i}
that $\proxc{L}{g}\in\Gamma_0(\HH)$.

\ref{t:4i}--\ref{t:4ii}: These follow from 
Proposition~\ref{p:12}\ref{p:12i}--\ref{p:12ii} and
Lemma~\ref{l:7}\ref{l:7ii}.

\ref{t:4vi}: 
Example~\ref{ex:8}\ref{ex:8iii} and \eqref{e:fermat} 
yield $\Argmin{(\proxc{L}{g})}=\Fix\prox_{\proxc{L}{g}}=
\Fix(L^*\circ\prox_g\circ L)$. 

\ref{t:4iv}: It follows from \ref{t:4i} that
$(\proxc{L}{g})+\qq_\HH=L^*\epushfwd(g+\qq_\GG)$. Therefore,
using Example~\ref{ex:8}\ref{ex:8i}, Lemma~\ref{l:4}\ref{l:4i+},
and \cite[Proposition~13.24(iv)]{Livre1}, we derive that
\begin{equation}
(\proxc{L}{g})^*\infconv\qq_\HH=
\big((\proxc{L}{g})+\qq_\HH\big)^*=
\big(L^*\epushfwd(g+\qq_\GG)\big)^*=
(g+\qq_\GG)^*\circ L
=(g^*\infconv\qq_\GG)\circ L.
\end{equation}
Hence, it follows from Lemma~\ref{l:4}\ref{l:4ii} that 
\begin{equation}
(\proxc{L}{g})\infconv\qq_{\HH}+
(g^{*}\infconv\qq_{\GG})\circ L=
(\proxc{L}{g})\infconv\qq_{\HH}
+(\proxc{L}{g})^*\infconv\qq_{\HH}=\qq_{\HH}.
\end{equation}

\ref{t:4vii}:
We use Example~\ref{ex:7}\ref{ex:7i}, Lemma~\ref{l:4}\ref{l:4ii},
Proposition~\ref{p:12}\ref{p:12v}, \ref{t:4iv}, and 
Lemma~\ref{l:7}\ref{l:7ii} to obtain
\begin{align}
(\proxcc{L}{g})\infconv\qq_\HH
&=\qq_\HH-(\proxcc{L}{g})^*\infconv\qq_\HH\nonumber\\
&=\qq_\HH-(\proxc{L}{g^*})\infconv\qq_\HH\nonumber\\
&=\qq_\HH-\big(\qq_\HH-(g^{**}\infconv\qq_\GG)\circ L\big)
\nonumber\\
&=(g\infconv\qq_\GG)\circ L.
\end{align}

\ref{t:4iii}: 
We derive from \eqref{e:pierre},
Proposition~\ref{p:1}\ref{p:1vii} with $B=\partial g$, and
Example~\ref{ex:7}\ref{ex:7ii} that
\begin{equation}
L^{-1}(\Argmin g)
=L^{-1}(\zer\partial g)
\subset\zer(\proxcc{L}{\partial g})
=\zer\partial(\proxcc{L}{g})
=\Argmin(\proxcc{L}{g}).
\end{equation}
Next, since $\proxcc{L}{g}\in\Gamma_0(\HH)$ by
Example~\ref{ex:7}\ref{ex:7i}, \cite[Proposition~17.5]{Livre1}
and \ref{t:4vii} yield
$\Argmin(\proxcc{L}{g})=\Argmin((\proxcc{L}{g})\infconv\qq_{\HH})
=\Argmin((g\infconv\qq_\GG)\circ L)$.
\end{proof}

\begin{proposition}
\label{p:5}
Suppose that $L\in\BL(\HH,\GG)$ satisfies $0<\|L\|\leq 1$, let
$\alpha\in\RP$, let $g\in\Gamma_0(\GG)$ be such that
$g-\alpha\qq_{\GG}$ is convex, and set
$\beta=(\alpha+1)\|L\|^{-2}-1$. Suppose that one of the following
is satisfied:
\begin{enumerate}
\item
\label{p:5ii}
$\alpha>0$, i.e., $g$ is $\alpha$-strongly convex. 
\item
\label{p:5iii}
$\|L\|<1$.
\end{enumerate}
Then $\proxc{L}{g}$ is $\beta$-strongly convex.
\end{proposition}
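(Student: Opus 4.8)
The plan is to establish the conclusion directly from the definition of strong convexity, i.e.\ by showing that $\proxc{L}{g}-\beta\qq_\HH$ is convex, sharpening the argument already used in the proof of Example~\ref{ex:8}\ref{ex:8i}. The guiding observation is that the strong convexity of $g$ improves the Lipschitz constant of $\prox_g$ from $1$ to $(\alpha+1)^{-1}$, and this is precisely what upgrades the earlier conclusion ``$\|L\|^{-2}$-strongly convex'' to ``$(1+\beta)=(\alpha+1)\|L\|^{-2}$-strongly convex.''

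First I would record the preliminaries. By Example~\ref{ex:8}\ref{ex:8i-}, $g^*\in\Gamma_0(\GG)$, so Lemma~\ref{l:4}\ref{l:4i-} guarantees that $\psi:=g^*\infconv\qq_\GG\colon\GG\to\RR$ is convex and Fr\'echet differentiable, with $\nabla\psi=\prox_g$ by Lemma~\ref{l:4}\ref{l:4i}. Consequently $\psi\circ L\colon\HH\to\RR$ is continuous, convex, and Fr\'echet differentiable, with gradient $\nabla(\psi\circ L)=L^*\circ\prox_g\circ L$.

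Next I would bound this gradient. Since $g\in\Gamma_0(\GG)$, its subdifferential $\partial g$ is maximally monotone (Lemma~\ref{l:4}\ref{l:4i--}), and the hypothesis that $g-\alpha\qq_\GG$ is convex means exactly that $\partial g-\alpha\Id_\GG=\partial(g-\alpha\qq_\GG)$ is monotone, i.e.\ $\partial g$ is $\alpha$-strongly monotone. Hence $\prox_g=J_{\partial g}$ is $(\alpha+1)$-cocoercive: by Lemma~\ref{l:1}\ref{l:1iv} when $\alpha>0$, and by the firm nonexpansiveness of Lemma~\ref{l:4}\ref{l:4i++} (which is $1$-cocoercivity) when $\alpha=0$. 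Because $L\neq 0$, Lemma~\ref{l:9}\ref{l:9i} then renders $L^*\circ\prox_g\circ L$ $(\alpha+1)\|L\|^{-2}$-cocoercive; since a $c$-cocoercive operator is $c^{-1}$-Lipschitz (Cauchy--Schwarz applied to the cocoercivity inequality), $\nabla(\psi\circ L)$ is $\|L\|^2(\alpha+1)^{-1}$-Lipschitz.

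Finally I would conjugate. Applying Lemma~\ref{l:8} to the continuous convex function $\psi\circ L$ shows that $(\psi\circ L)^*$ is $(\alpha+1)\|L\|^{-2}$-strongly convex. Since Definition~\ref{d:2} gives $\proxc{L}{g}=(\psi\circ L)^*-\qq_\HH$ and $(\alpha+1)\|L\|^{-2}=1+\beta$, it follows that $\proxc{L}{g}-\beta\qq_\HH=(\psi\circ L)^*-(1+\beta)\qq_\HH$ is convex, i.e.\ $\proxc{L}{g}$ is $\beta$-strongly convex. Either hypothesis forces $\beta\in\RPP$: under \ref{p:5ii} one has $(\alpha+1)\|L\|^{-2}\geq\alpha+1>1$, and under \ref{p:5iii} one has $(\alpha+1)\|L\|^{-2}>\alpha+1\geq 1$. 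The only mildly delicate points are the bookkeeping tying the cocoercivity constant to the exact value of $\beta$ and the separate treatment of $\alpha=0$ (which arises only under \ref{p:5iii}); everything else is a routine invocation of the stated lemmas. An alternative route would combine the subdifferential identity $\partial(\proxc{L}{g})=\proxc{L}{\partial g}$ of Example~\ref{ex:8}\ref{ex:8ii} with the strong monotonicity from Proposition~\ref{p:2}\ref{p:2ii}, but that requires the equivalence between strong convexity of a function in $\Gamma_0(\HH)$ and strong monotonicity of its subdifferential, which the direct argument above sidesteps.
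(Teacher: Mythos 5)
Your proof is correct, but it follows a genuinely different route from the paper's. The paper argues at the operator level: it notes, exactly as you do, that $\partial g-\alpha\Id_\GG=\partial(g-\alpha\qq_\GG)$ is monotone, but then invokes the subdifferential identity $\partial(\proxc{L}{g})=\proxc{L}{\partial g}$ of Example~\ref{ex:8}\ref{ex:8ii} together with Proposition~\ref{p:2}\ref{p:2ii} to get $\beta$-strong monotonicity of $\partial(\proxc{L}{g})$, and passes between strong monotonicity of a subdifferential and strong convexity of the function via the external equivalence \cite[Remark~3.5.3]{Zali02} (used in both directions) --- precisely the ``alternative route'' you flag at the end. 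Your argument instead stays at the function level: the improved $(\alpha+1)$-cocoercivity of $\prox_g$ (Lemma~\ref{l:1}\ref{l:1iv} for $\alpha>0$, Lemma~\ref{l:4}\ref{l:4i++} for $\alpha=0$, the same case split the paper makes inside the proof of Proposition~\ref{p:2}\ref{p:2ii}) transfers through $L$ by Lemma~\ref{l:9}\ref{l:9i}, giving $\nabla\big((g^*\infconv\qq_\GG)\circ L\big)=L^*\circ\prox_g\circ L$ a Lipschitz constant $\|L\|^2(\alpha+1)^{-1}$, and Lemma~\ref{l:8} then yields $(1+\beta)$-strong convexity of the conjugate, hence $\beta$-strong convexity of $\proxc{L}{g}$ after subtracting $\qq_\HH$. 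This is a sharpened version of the computation \eqref{e:a3} in the proof of Example~\ref{ex:8}, and its advantage is self-containedness: it needs only lemmas stated in the paper and sidesteps the Z\u{a}linescu equivalence entirely, in effect inlining the cocoercivity argument of Proposition~\ref{p:2}\ref{p:2ii} at the function level. What the paper's route buys in exchange is brevity (three citations to already-proved results) and a cleaner illustration of its guiding theme that properties of proximal compositions descend from those of resolvent compositions via \eqref{e:17}. Your bookkeeping is accurate throughout, including the verification that $\beta>0$ under either hypothesis, which is needed since the paper's notion of $\beta$-strong convexity requires $\beta\in\RPP$.
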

\begin{proof}
By assumption, $g-\alpha\qq_{\GG}\in\Gamma_0(\GG)$ and hence, by
Lemma~\ref{l:4}\ref{l:4i--}, $\partial(g-\alpha\qq_{\GG})$ is
maximally monotone. 
However, by Lemma~\ref{l:4}\ref{l:4vi}, 
\begin{equation}
\partial g=\partial\big((g-\alpha\qq_{\GG})+\alpha\qq_{\GG}\big)
=\partial(g-\alpha\qq_{\GG})+\alpha\Id_{\GG}
\end{equation}
and therefore $\partial g-\alpha\Id_{\GG}=
\partial(g-\alpha\qq_{\GG})$ is monotone. 
Moreover, by \cite[Remark~3.5.3]{Zali02},
$\partial g$ is $\alpha$-strongly monotone in \ref{p:5ii}. 
Altogether, it follows from Example~\ref{ex:8}\ref{ex:8ii} and 
Proposition~\ref{p:2}\ref{p:2ii} that 
$\partial(\proxc{L}{g})=\proxc{L}{\partial g}$
is $\beta$-strongly monotone. Appealing to
\cite[Remark~3.5.3]{Zali02} again, we conclude that $\proxc{L}{g}$
is $\beta$-strongly convex.
\end{proof}

\begin{proposition}
\label{p:6}
Suppose that $L\in\BL(\HH,\GG)$ satisfies $0<\|L\|\leq 1$, let
$\alpha\in\RPP$, let $g\colon\GG\to\RR$ be convex and
differentiable, with a $\alpha^{-1}$-Lipschitzian gradient, and set
$\beta=(\alpha+1)\|L\|^{-2}-1$. Then $\proxcc{L}{g}$ is
differentiable on $\HH$ and its gradient is
$\beta^{-1}$-Lipschitzian.
\end{proposition}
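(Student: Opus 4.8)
The plan is to prove the statement by conjugate duality, reducing it to the strong-convexity result of Proposition~\ref{p:5} applied to $g^*$. The bridge is Lemma~\ref{l:8}, which trades ``differentiable with Lipschitz gradient'' against ``strongly convex conjugate'', combined with the conjugation formula $(\proxcc{L}{g})^*=\proxc{L}{g^*}$ supplied by Proposition~\ref{p:12}\ref{p:12v}. In effect, Proposition~\ref{p:6} is the smoothness analogue of Proposition~\ref{p:5}, and the two are dual to each other under conjugation.

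First I would record that $g\colon\GG\to\RR$, being convex and differentiable, is continuous and hence lies in $\Gamma_0(\GG)$, and that $\beta=(\alpha+1)\|L\|^{-2}-1\geq\alpha>0$ since $\|L\|\leq 1$ and $\alpha>0$. Applying Lemma~\ref{l:8} to $f=g$ with Lipschitz constant $\alpha^{-1}$ shows that $g^*$ is $\alpha$-strongly convex; in particular $g^*\in\Gamma_0(\GG)$ and $g^*-\alpha\qq_\GG$ is convex. Because $\alpha>0$ and $0<\|L\|\leq 1$, the hypotheses of Proposition~\ref{p:5} are satisfied by $g^*$, and its case~\ref{p:5ii} gives that $\proxc{L}{g^*}$ is $\beta$-strongly convex. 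Invoking Proposition~\ref{p:12}\ref{p:12v}, namely $(\proxcc{L}{g})^*=\proxc{L}{g^*}$, I conclude that $(\proxcc{L}{g})^*$ is $\beta$-strongly convex.

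It then remains to transport this back to $\proxcc{L}{g}$ through Lemma~\ref{l:8}, and here lies the only delicate point: Lemma~\ref{l:8} applies solely to a \emph{real-valued} continuous convex function, whereas a priori $\proxcc{L}{g}$ is only known to belong to $\Gamma_0(\HH)$ by Example~\ref{ex:7}\ref{ex:7i}. To remove this obstacle I would exploit that the $\beta$-strongly convex function $(\proxcc{L}{g})^*=\proxc{L}{g^*}$ is supercoercive: writing it as $\phi+\beta\qq_\HH$ with $\phi\in\Gamma_0(\HH)$ and using the continuous affine minorant of $\phi$ provided by Lemma~\ref{l:7}\ref{l:7i}, it is bounded below by $\beta\qq_\HH$ plus an affine term. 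Supercoercivity of $(\proxcc{L}{g})^*$ forces $\dom(\proxcc{L}{g})=\HH$ (see \cite{Livre1}), so the convex function $\proxcc{L}{g}=((\proxcc{L}{g})^*)^*$ is finite, hence continuous, on $\HH$.

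With $\proxcc{L}{g}\colon\HH\to\RR$ now continuous and convex and its conjugate $\beta$-strongly convex, I would apply the implication \ref{l:8ii}$\Rightarrow$\ref{l:8i} of Lemma~\ref{l:8} with the Lemma's constant taken to be $\beta^{-1}\in\RPP$, and conclude that $\proxcc{L}{g}$ is Fr\'echet differentiable on $\HH$ with $\beta^{-1}$-Lipschitz gradient. Apart from the real-valuedness step just described, the argument is a mechanical translation through Lemma~\ref{l:8} and the conjugation identity of Proposition~\ref{p:12}\ref{p:12v}; the supercoercivity step is the part most likely to require care in the write-up.
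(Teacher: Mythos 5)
Your proof is correct and follows essentially the same route as the paper: apply Lemma~\ref{l:8} to $g$ to get $\alpha$-strong convexity of $g^*$, then combine Proposition~\ref{p:12}\ref{p:12v} with Proposition~\ref{p:5}\ref{p:5ii} to conclude that $(\proxcc{L}{g})^*=\proxc{L}{g^*}$ is $\beta$-strongly convex, and invoke Lemma~\ref{l:8} once more. Your supercoercivity argument merely makes explicit the real-valuedness and continuity of $\proxcc{L}{g}$ that the paper's terse second invocation of Lemma~\ref{l:8} leaves implicit --- a legitimate detail to check, but not a different approach.
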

\begin{proof}
We derive from Lemma~\ref{l:8} that $g^*$ is
$\alpha$-strongly convex. In turn, 
Proposition~\ref{p:12}\ref{p:12v} and 
Proposition~\ref{p:5}\ref{p:5ii} imply
that $(\proxcc{L}{g})^*=\proxc{L}{g^*}$ is 
$\beta$-strongly convex. Invoking Lemma~\ref{l:8} once more,
we obtain the assertion.
\end{proof}

The remainder of this section is devoted to examples of proximal
compositions.

\begin{example}[linear projection]
\label{ex:39}
Let $V$ be a closed vector subspace of $\HH$ and let
$g\colon\HH\to\RX$ be a proper function that admits a continuous
affine minorant. Then
$\proxc{\proj_V}{g}=\iota_V+\big(g^*+d_V^2/2\big)^*$.
\end{example}
\begin{proof}
Let $x\in\HH$. By Proposition~\ref{p:12}\ref{p:12ii}, 
$\dom(\proxc{\proj_V}{g})=\proj_V(\dom g^{**})\subset V$.
Therefore, if $x\notin V$, then $(\proxc{\proj_V}{g})(x)=\pinf$.
Now suppose that $x\in V$ and note that, by Pythagoras' identity,
$(\forall v\in V^\bot)$ $\qq_\HH(x-v)=\qq_\HH(x)+\qq_\HH(v)$.
Hence, using Proposition~\ref{p:12}\ref{p:12i} and basic
conjugation calculus \cite[Chapter~13]{Livre1}, we get
\begin{align}
\big(\proxc{\proj_V}{g}\big)(x)
&=\min_{\substack{y\in\HH\\ \proj_Vy=x}}
g^{**}(y)+\qq_\HH(y)-\qq_\HH(x)\nonumber\\
&=\min_{y\in x+V^\bot}g^{**}(y)+\qq_\HH(y)-\qq_\HH(x)\nonumber\\
&=\min_{v\in V^\bot}g^{**}(x-v)+\qq_\HH(x-v)-\qq_\HH(x)\nonumber\\
&=\min_{v\in\HH}g^{**}(x-v)+\iota_{V^\bot}(v)+\qq_\HH(v)\nonumber\\
&=\big(g^{**}\infconv(\iota_{V^\bot}+\qq_\HH)\big)(x)\nonumber\\
&=\big((g^*)^*\infconv(d_V^2/2)^*\big)(x)\nonumber\\
&=\big(g^*+d_V^2/2\big)^*(x),
\end{align}
which establishes the identity.
\end{proof}

\begin{example}[proximal mixture]
\label{ex:40}
Let $0\neq p\in\NN$ and, for every $k\in\{1,\ldots,p\}$, let 
$\GG_k$ be a real Hilbert space, let $L_{k}\in\BL(\HH,\GG_k)$, 
let $\omega_k\in\RPP$, and let $g_k\in\Gamma_0(\GG_k)$.
Suppose that $0<\sum_{k=1}^p\omega_k\|L_k\|^2\leq 1$ and let $\GG$ 
be the standard product vector space
$\GG_1\times\cdots\times\GG_p$, with generic element 
$\boldsymbol{y}=(y_k)_{1\leq k\leq p}$, and equipped 
with the scalar product 
$(\boldsymbol{y},\boldsymbol{y}')\mapsto\sum_{k=1}^p
\omega_k\scal{y_k}{y'_k}$. Set 
$L\colon\HH\to\GG\colon x\mapsto(L_kx)_{1\leq k\leq p}$
and $g\colon\GG\to\RX\colon\boldsymbol{y}\mapsto
\sum_{k=1}^p\omega_kg_k(y_k)$.
Then $\qq_{\GG}\colon\GG\to\RR\colon\boldsymbol{y}\mapsto
\sum_{k=1}^p\omega_k\qq_{\GG_k}(y_k)$,
$L^*\colon\GG\to\HH\colon\boldsymbol{y}\mapsto\sum_{k=1}^p
\omega_kL_k^*y_k$, $\prox_g\colon\GG\to\GG\colon\boldsymbol{y}
\mapsto(\prox_{g_k}y_k)_{1\leq k\leq p}$, and 
$g^*\colon\GG\to\RX\colon\boldsymbol{y}^*\mapsto
\sum_{k=1}^p\omega_kg_k^*(y_k^*)$. Thus, $g\in\Gamma_0(\GG)$,
$0<\|L\|\leq 1$, \eqref{e:12} produces the 
\emph{proximal mixture}
\begin{equation}
\label{e:q2}
\proxc{L}{g}=\Bigg(\sum_{k=1}^p\,\omega_k\big(g_k^*\infconv
\qq_{\GG_k}\big)\circ L_k\Bigg)^*-\qq_\HH,
\end{equation}
and Example~\ref{ex:8} yields
\begin{equation}
\label{e:p13}
\proxc{L}{g}\in\Gamma_0(\HH)\quad\text{and}\quad
\prox_{\proxc{L}{g}}=
\sum_{k=1}^m\omega_kL_k^*\circ\prox_{g_k}\circ L_k.
\end{equation}
In particular if, for every $k\in\{1,\ldots,p\}$, $\GG_k=\HH$ and
$L_k=\Id_{\HH}$, then \eqref{e:q2} is the \emph{proximal average}
\begin{equation}
\label{e:jj7}
\proxc{L}{g}=\Bigg(\sum_{k=1}^p\,\omega_k\big(g_k^*\infconv
\qq_{\HH}\big)\Bigg)^*-\qq_\HH,
\end{equation}
which has been studied in \cite{Baus08} (see also \cite{Luce10}
for illustrations and numerical aspects). The fact that 
$\sum_{k=1}^m\omega_k\prox_{g_k}$ is a proximity operator was 
first observed by Moreau \cite{Mor63a,More65} as a consequence of 
Lemma~\ref{l:5}.
\end{example}

\begin{remark}[proximal sum]
\label{r:7}
In Example~\ref{ex:40}, if $\sum_{k=1}^p\omega_k\|L_k\|^2>1$, the
proximal mixture \eqref{e:q2} may not be a function in
$\Gamma_0(\HH)$. In the case of \eqref{e:jj7} with $p=2$ and
$\omega_1=\omega_2=1$, conditions under which the 
\emph{proximal sum}
$\proxc{L}{g}=(g_1^*\infconv\qq_{\HH}+g_2^*\infconv\qq_{\HH})^*
-\qq_\HH$ is in $\Gamma_0(\HH)$ are provided in 
\cite{Buim19,Bord18,Zara74}. 
\end{remark}

\begin{remark}[proximal average]
\label{r:6}
As in Remark~\ref{r:3}, we can specialize the above results to
establish in a straightforward fashion various properties of the
proximal average \eqref{e:jj7}. In this context, we define $\GG$
and $g$ as in Example~\ref{ex:40} with $\GG_1=\cdots=\GG_p=\HH$ and
$\sum_{k=1}^p\omega_k=1$, and set $L\colon\HH\to\GG\colon
x\mapsto(x,\ldots,x)$. Then $L$ is an isometry and the resulting
proximal average $f=\proxc{L}{g}=\proxcc{L}{g}$ of \eqref{e:jj7}
(see Proposition~\ref{p:12}\ref{p:12vi}) possesses in
particular the following properties:
\begin{enumerate}
\item
Example~\ref{ex:8}\ref{ex:8i} yields 
$f\in\Gamma_0(\HH)$ (see \cite[Corollary~5.2]{Baus08}).
\item
Example~\ref{ex:8}\ref{ex:8iii} yields
$\prox_f=\sum_{k=1}^p\omega_k\prox_{g_k}$ 
(see \cite[Theorem~6.7]{Baus08}).
\item
Proposition~\ref{p:12}\ref{p:12v} yields
$f^*=(\sum_{k=1}^p\,\omega_k(g_k\infconv\qq_{\HH}))^*-\qq_\HH$
(see \cite[Theorem~5.1]{Baus08}).
\item
Proposition~\ref{p:13} yields 
$(\sum_{k=1}^p\omega_kg^*_k)^*\leq f\leq\sum_{k=1}^p\omega_kg_k$
(see \cite[Theorem~5.4]{Baus08}).
\item
Theorem~\ref{t:4}\ref{t:4ii} yields 
$\dom f=\sum_{k=1}^p\omega_k\dom g_k$
(see \cite[Theorem~4.6]{Baus08}).
\item
Theorem~\ref{t:4}\ref{t:4vii} yields 
$f\infconv\qq_{\HH}=\sum_{k=1}^p\omega_k(g_k\infconv\qq_{\HH})$
(see \cite[Theorem~6.2(i)]{Baus08}).
\item
Theorem~\ref{t:4}\ref{t:4iii} yields 
$\Argmin(f\infconv\qq_{\HH})=
\Argmin\sum_{k=1}^p\omega_k(g_k\infconv\qq_{\HH})$
(see \cite[Corollary~6.4]{Baus08}).
\item
\label{r:6viii}
Suppose that the functions $(g_k)_{1\leq k\leq p}$ are strongly
convex. Then it follows from Proposition~\ref{p:5}\ref{p:5ii} that
$f$ is strongly convex (see \cite[Corollary~3.23]{Baus16}, where
the strong convexity of $f$ is shown to hold more generally under 
the assumption that one of the functions $(g_k)_{1\leq k\leq p}$ is
strongly convex).
\end{enumerate}
\end{remark}

\section{Application to monotone inclusion models}
\label{sec:6}

On the numerical side, in monotone inclusion problems, 
the advantage of the resolvent
composition over compositions such as \eqref{e:c} or \eqref{e:1} is
that its resolvent is readily available through
Proposition~\ref{p:0}. Hence, processing it efficiently in
an algorithm does not require advanced splitting techniques.
In particular, in minimization problems, one deals with monotone
operators which are subdifferentials and handling a proximal
composition $\proxc{L}{g}$ is more straightforward than the
compositions $g\circ L$ or $L^*\pushfwd g$ thanks to
Example~\ref{ex:8}\ref{ex:8iii}.
On the modeling side, while these compositions are not 
interchangeable in general, replacing the standard composition 
\eqref{e:c} by a resolvent composition, may also be of interest.
For instance, in the special case of the basic proximal average 
\eqref{e:jj7}, replacing $g\circ L=\sum_{k=1}^p\omega_k g_k$
by $\proxc{L}{g}=(\sum_{k=1}^p\,\omega_k(g_k^*\infconv
\qq_{\HH}))^*-\qq_\HH$ in variational problems has been 
advocated in \cite{Liti20,Yuyl13}. More generally, the 
computational and modeling benefits of employing resolvent 
compositions in place of classical ones in concrete applications
is a natural topic of investigation, and it will be pursued 
elsewhere.  

The focus of this section is on the use of resolvent and proximal
compositions in the context of the following constrained inclusion
problem.

\begin{problem}
\label{prob:1}
Suppose that $L\in\BL(\HH,\GG)$ satisfies $0<\|L\|\leq 1$,
let $B\colon\GG\to 2^{\GG}$ be maximally monotone, and let
$V\neq\{0\}$ be a closed vector subspace of $\HH$. The task is to 
\begin{equation}
\label{e:p1}
\text{find}\;\:x\in V\;\:\text{such that}\;\:0\in B(Lx).
\end{equation}
\end{problem}

As will be illustrated in the examples below, \eqref{e:p1} models a
broad spectrum of problems in applied analysis. Of special interest
to us are situations in which, due to modeling errors,
$L(V)\cap\zer B=\emp$, which means that Problem~\ref{prob:1} has no
solution. As a surrogate to it with adequate approximate solutions
in such instances, we propose the following formulation. It is
based on the resolvent composition and will be seen to be solvable
by a simple implementation of the proximal point algorithm.

\begin{problem}
\label{prob:2}
Suppose that $L\in\BL(\HH,\GG)$ satisfies $0<\|L\|\leq 1$,
let $B\colon\GG\to 2^{\GG}$ be maximally monotone, let $V\neq\{0\}$
be a closed vector subspace of $\HH$, let $\gamma\in\RPP$, and
set $A=\proxcc{L}{(\gamma B)}$. The task is to 
\begin{equation}
\label{e:p2}
\text{find}\;\:x\in\HH\;\:\text{such that}\;\:
0\in\big(\proxc{\proj_V}{A}\big)x.
\end{equation}
\end{problem}

A justification of the fact that Problem~\ref{prob:2} is an
adequate relaxation of Problem~\ref{prob:1} is given in item
\ref{t:3vi} below. 

\begin{theorem}
\label{t:3}
Consider the settings of Problems~\ref{prob:1} and \ref{prob:2},
and let $S_1$ and $S_2$ be their respective sets of solutions.
Then the following hold:
\begin{enumerate}
\item
\label{t:3i}
$\proxc{\proj_V}{A}$ is maximally monotone.
\item
\label{t:3ii}
$J_{\proxc{\proj_V}{A}}=\proj_V\circ(\Id_\HH-L^*\circ L
+L^*\circ J_{\gamma B}\circ L)\circ\proj_V$.
\item
\label{t:3iii}
$S_1$ and $S_2$ are closed convex sets.
\item
\label{t:3iv}
$S_2=\Fix(\proj_V\circ(\Id_\HH-L^*\circ L
+L^*\circ J_{\gamma B}\circ L))$.
\item
\label{t:3vi}
Problem~\ref{prob:2} is an exact relaxation of
Problem~\ref{prob:1} in the sense that $S_1\neq\emp$ 
$\Rightarrow$ $S_2=S_1$.
\item
\label{t:3v}
$S_2=\zer(N_V+L^*\circ(\!\yosi{B}{\gamma})\circ L)$.
\end{enumerate}
\end{theorem}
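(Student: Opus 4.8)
The plan is to start from item~\ref{t:3iv}, which already expresses $S_2$ as a fixed-point set, and to reduce the operator occurring there to the form $\Id_\HH-\gamma L^*\circ(\!\yosi{B}{\gamma})\circ L$, after which the announced zero set drops out of the identity $\proj_V=J_{N_V}$. Concretely, set $M=\Id_\HH-L^*\circ L+L^*\circ J_{\gamma B}\circ L$, so that \ref{t:3iv} reads $S_2=\Fix(\proj_V\circ M)$. First I would invoke the definition $\yosi{B}{\gamma}=\gamma^{-1}(\Id_\GG-J_{\gamma B})$ to write $J_{\gamma B}=\Id_\GG-\gamma\,\yosi{B}{\gamma}$, whence $L^*\circ J_{\gamma B}\circ L=L^*\circ L-\gamma\,L^*\circ(\!\yosi{B}{\gamma})\circ L$. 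The term $L^*\circ L$ then cancels the $-L^*\circ L$ already present in $M$, leaving $M=\Id_\HH-\gamma C$, where $C=L^*\circ(\!\yosi{B}{\gamma})\circ L$ is single-valued and everywhere defined on $\HH$ (maximal monotonicity of $B$ makes $J_{\gamma B}$, hence $\yosi{B}{\gamma}$, defined on all of $\GG$).

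Next I would translate the fixed-point condition into an inclusion. Recalling that $\proj_V=J_{N_V}=(\Id_\HH+N_V)^{-1}$, for $x\in\HH$ one has $x\in\Fix(\proj_V\circ(\Id_\HH-\gamma C))$ if and only if $x=J_{N_V}(x-\gamma Cx)$, that is, if and only if $x-\gamma Cx\in(\Id_\HH+N_V)x$, which simplifies to $-\gamma Cx\in N_Vx$. Because $V$ is a vector subspace, $N_Vx$ equals the cone $V^\bot$ when $x\in V$ and is empty otherwise; consequently the positive factor $\gamma$ is immaterial, so $-\gamma Cx\in N_Vx\Leftrightarrow-Cx\in N_Vx\Leftrightarrow 0\in(N_V+C)x$. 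Combining this equivalence with $S_2=\Fix(\proj_V\circ M)=\Fix(\proj_V\circ(\Id_\HH-\gamma C))$ yields $S_2=\zer(N_V+C)=\zer(N_V+L^*\circ(\!\yosi{B}{\gamma})\circ L)$, which is the claim.

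I do not expect a serious obstacle: once the cancellation $-L^*\circ L+L^*\circ L=0$ is spotted, the argument is essentially bookkeeping built on the resolvent identity $\proj_V=J_{N_V}$ and Lemma~\ref{l:1}\ref{l:1ii}. The only points requiring care are the cone homogeneity of $N_V$, which is what permits the index $\gamma$ to be discarded, and the observation that $\yosi{B}{\gamma}$ is a genuine single-valued operator with full domain, so that $C$ and the inclusion $0\in(N_V+C)x$ are well posed.
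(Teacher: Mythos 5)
Your argument is correct as far as it goes, and for the one item it addresses it coincides with the paper's own proof: the paper likewise derives \ref{t:3v} from \ref{t:3iv} by writing $x\in S_2$ $\Leftrightarrow$ $x-L^*\big(Lx-J_{\gamma B}(Lx)\big)\in(N_V+\Id_\HH)x$ $\Leftrightarrow$ $0\in N_Vx+L^*\big((\Id_\GG-J_{\gamma B})(Lx)\big)$, using exactly $\proj_V=J_{N_V}$, the cancellation of $L^*\circ L$, and the fact that $N_Vx$ is a cone so that the factor $\gamma$ in $\Id_\GG-J_{\gamma B}=\gamma\,\yosi{B}{\gamma}$ can be absorbed (see \eqref{e:t8} and the surrounding computation). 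Your remarks on the full domain and single-valuedness of $\yosi{B}{\gamma}$ are also sound.

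The genuine gap is that the statement is the whole of Theorem~\ref{t:3}, and you have proved only \ref{t:3v}, while \emph{presupposing} \ref{t:3iv}. Items \ref{t:3i}--\ref{t:3iv} are not free: \ref{t:3i} requires Theorem~\ref{t:1}\ref{t:1ic} (maximal monotonicity of $A=\proxcc{L}{(\gamma B)}$) followed by Theorem~\ref{t:1}\ref{t:1i} applied with $\proj_V$ in place of $L$; \ref{t:3ii} requires Proposition~\ref{p:0} together with the resolvent formula of Proposition~\ref{p:1}\ref{p:1ic2}; \ref{t:3iii} needs \ref{t:3i} for $S_2$ (and \cite[Proposition~23.39]{Livre1} for $S_1$); and \ref{t:3iv} is then obtained from Lemma~\ref{l:1}\ref{l:1ii} via $S_2=\zer(\proxc{\proj_V}{A})=\Fix J_{\proxc{\proj_V}{A}}$ --- so your starting point is itself part of what must be established. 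Most importantly, the substantive item \ref{t:3vi} (exact relaxation, $S_1\neq\emp$ $\Rightarrow$ $S_2=S_1$) is entirely missing, and it is not bookkeeping: given $\overline{x}\in S_1$ and $x\in S_2$, one must combine the orthogonality relations \eqref{e:s1}--\eqref{e:s2}, namely $L^*\big(\yosi{B}{\gamma}(Lx)\big)\in V^\bot$ and $\yosi{B}{\gamma}(L\overline{x})=0$, with the $\gamma$-cocoercivity of $\yosi{B}{\gamma}$ to deduce from $\scal{Lx-L\overline{x}}{\yosi{B}{\gamma}(Lx)-\yosi{B}{\gamma}(L\overline{x})}=0$ that $\yosi{B}{\gamma}(Lx)=0$, i.e., $Lx\in\zer B$. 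That cocoercivity step is the key idea of the theorem and does not follow from the resolvent manipulations in your proposal.
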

\begin{proof}
\ref{t:3i}:
Theorem~\ref{t:1}\ref{t:1ic} asserts that $A$ is maximally
monotone. In view of Theorem~\ref{t:1}\ref{t:1i}, this makes
$\proxc{\proj_V}{A}$ maximally monotone.

\ref{t:3ii}: It follows from Proposition~\ref{p:0} and 
Proposition~\ref{p:1}\ref{p:1ic2} that 
\begin{align}
J_{\proxc{\proj_V}{A}}
&=\proj_V\circ J_{{\proxcc{L}{(\gamma B)}}}\circ\proj_V
\nonumber\\
&=\proj_V\circ\big(\Id_\HH-L^*\circ L
+L^*\circ J_{\gamma B}\circ L\big)\circ\proj_V.
\end{align}

\ref{t:3iii}: 
The maximal monotonicity of $B$ implies that $\zer B$ is closed and
convex \cite[Proposition~23.39]{Livre1}. Hence, since $L$ is
continuous and linear, $L^{-1}(\zer B)$ is closed and
convex, and so is therefore $S_1=V\cap L^{-1}(\zer B)$.
Likewise, it follows from \ref{t:3i} that
$S_2=\zer(\proxc{\proj_V}{A})$ is closed and convex.

\ref{t:3iv}:
It results from Lemma~\ref{l:1}\ref{l:1ii} and \ref{t:3ii} that
\begin{equation}
S_2=\zer\big(\proxc{\proj_V}{A}\big)
=\Fix J_{\proxc{\proj_V}{A}}=
\Fix\Big(\proj_V\circ\big(\Id_\HH-L^*\circ L
+L^*\circ J_{\gamma B}\circ L\big)\Big).
\end{equation}

\ref{t:3vi}:
Suppose that $\overline{x}\in S_1$ and $x\in S_2$. Then 
$\overline{x}=\proj_V\overline{x}$ and 
$0\in B(L\overline{x})$, i.e., by Lemma~\ref{l:1}\ref{l:1ii}, 
$L\overline{x}=J_{\gamma B}(L\overline{x})$ and therefore 
$\overline{x}=(\Id_\HH-L^*\circ L)\overline{x}+
L^*(L\overline{x})=(\Id_\HH-L^*\circ L)\overline{x}+
L^*(J_{\gamma B}(L\overline{x}))$. Altogether, bringing into play
\ref{t:3iv}, we get
\begin{equation}
\overline{x}=\proj_V\overline{x}=\proj_V\big((\Id_\HH-L^*\circ L)
\overline{x}+(L^*\circ J_{\gamma B}\circ L)\overline{x}\big)
\in S_2.
\end{equation}
It remains to show that $x\in S_1$, i.e., as \ref{t:3iv} yields 
$x\in V$, that $0\in B(Lx)$. Since
$L\overline{x}\in\zer B$, Lemma~\ref{l:1}\ref{l:1ii} entails that
$\yosi{B}{\gamma}(L\overline{x})=0$. Hence,
\begin{equation}
\label{e:s1}
(\forall v\in V)\quad
\Scal{v}{L^*\big(\!\yosi{B}{\gamma}(L\overline{x})\big)}=0.
\end{equation}
On the other hand, we derive from \ref{t:3iv} that
\begin{equation}
\label{e:t8}
x=\proj_V\Big(x-L^*\big((\Id_\HH-J_{\gamma B})(Lx)\big)\Big)
=\big(N_V+\Id_\HH\big)^{-1}
\Big(x-\gamma L^*\big(\yosi{B}{\gamma}(Lx)\big)\Big).
\end{equation}
Thus, $-L^*(\!\yosi{B}{\gamma}(Lx))\in N_Vx=V^\bot$, i.e., 
\begin{equation}
\label{e:s2}
(\forall v\in V)\quad
\Scal{v}{L^*\big(\!\yosi{B}{\gamma}(Lx)\big)}=0.
\end{equation}
Since $x-\overline{x}\in V$, we deduce from \eqref{e:s1} and
\eqref{e:s2} that 
\begin{equation}
\Scal{x-\overline{x}}{L^*\big(\!\yosi{B}{\gamma}(Lx)-
\yosi{B}{\gamma}(L\overline{x})\big)}=0.
\end{equation}
Thus,
\begin{equation}
\scal{Lx-L\overline{x}}{\!\yosi{B}{\gamma}(Lx)-\!\yosi{B}{\gamma}
(L\overline{x})}=0
\end{equation}
and, since $\yosi{B}{\gamma}$ is $\gamma$-cocoercive
\cite[Corollary~23.11(iii)]{Livre1}, we obtain
\begin{equation}
\gamma\|\yosi{B}{\gamma}(Lx)\|^2
=\gamma\|\yosi{B}{\gamma}(Lx)-
\yosi{B}{\gamma}(L\overline{x})\|^2\leq
\scal{Lx-L\overline{x}}{\yosi{B}{\gamma}(Lx)-
\yosi{B}{\gamma}(L\overline{x})}=0.
\end{equation}
We conclude that $\yosi{B}{\gamma}(Lx)=0$ and hence that
$Lx\in\zer\yosi{B}{\gamma}=\Fix J_{\gamma B}=\zer{B}$.

\ref{t:3v}: Let $x\in\HH$. Then, arguing as in \eqref{e:t8},
\begin{eqnarray}
x\in S_2
&\Leftrightarrow&
x-L^*\big(Lx-J_{\gamma B}(Lx)\big)\in(N_V+\Id_\HH)x
\nonumber\\
&\Leftrightarrow&
0\in N_Vx+L^*\big((\Id_\GG-J_{\gamma B})(Lx)\big)
\nonumber\\
&\Leftrightarrow&
x\in\zer\big(N_V+L^*\circ(\!\yosi{B}{\gamma})\circ L\big),
\end{eqnarray}
which provides the desired identity.
\end{proof}

\begin{remark}[isometry]
\label{r:5}
Suppose that $L$ is an isometry in Theorem~\ref{t:3} (see
Remark~\ref{r:2}). In view of Proposition~\ref{p:1}\ref{p:1iic} and
Proposition~\ref{p:9}, the relaxed problem \eqref{e:p2} is then to
find a zero of
\begin{equation}
\label{e:r4a}
\proxc{\proj_V}{A}
=\proxc{\proj_V}{\big(\proxc{L}{(\gamma B)}\big)}
=\proxc{(L\circ\proj_V)}{{(\gamma B)}}, 
\end{equation}
and it follows from Theorem~\ref{t:3}\ref{t:3iv} that its 
set of solutions is 
$S_2=\Fix(\proj_V\circ L^*\circ J_{\gamma B}\circ L)$. 
\end{remark}

Next, we propose an algorithm for solving Problem~\ref{prob:2}
which is based on the most elementary method for solving monotone
inclusions, namely the proximal point algorithm \cite{Roc76a}.

\begin{proposition}
\label{p:22}
Suppose that Problem~\ref{prob:2} has a solution, 
let $(\lambda_n)_{n\in\NN}$ be a sequence in $\left]0,2\right[$
such that $\sum_{n\in\NN}\lambda_n(2-\lambda_n)=\pinf$, and let
$x_0\in V$. Iterate
\begin{equation}
\label{e:9}
\begin{array}{l}
\text{for}\;n=0,1,\ldots\\
\left\lfloor
\begin{array}{l}
y_n=Lx_n\\
q_n=J_{\gamma B}y_n-y_n\\
z_n=L^*q_n\\
x_{n+1}=x_n+\lambda_n\proj_Vz_n.
\end{array}
\right.
\end{array}
\end{equation}
Then $(x_n)_{n\in\NN}$ converges weakly to a solution to 
Problem~\ref{prob:2}.
\end{proposition}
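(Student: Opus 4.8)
The plan is to recognize the recursion \eqref{e:9} as a \emph{relaxed proximal point iteration} governed by the resolvent of the maximally monotone operator $M=\proxc{\proj_V}{A}$, and then to invoke the classical weak convergence theory for such iterations. Throughout, set $R=\Id_\HH-L^*\circ L+L^*\circ J_{\gamma B}\circ L$, so that Theorem~\ref{t:3}\ref{t:3ii} reads $J_M=\proj_V\circ R\circ\proj_V$.

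First I would check by induction that $x_n\in V$ for every $n\in\NN$: this holds for $n=0$ by hypothesis, and if $x_n\in V$ then $x_{n+1}=x_n+\lambda_n\proj_Vz_n\in V$ because $\proj_Vz_n$ lies in the subspace $V$. Next, unraveling the inner variables of \eqref{e:9} gives $z_n=L^*q_n=L^*\big(J_{\gamma B}(Lx_n)-Lx_n\big)=Rx_n-x_n$. Since $x_n\in V$ we have $\proj_Vx_n=x_n$, so the linearity of $\proj_V$ yields
\begin{equation}
\proj_Vz_n=\proj_V(Rx_n)-\proj_Vx_n=\proj_V\big(R(\proj_Vx_n)\big)-x_n=J_Mx_n-x_n.
\end{equation}
Hence \eqref{e:9} collapses to the relaxed proximal point recursion $x_{n+1}=x_n+\lambda_n(J_Mx_n-x_n)$.

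To conclude, recall that $M$ is maximally monotone by Theorem~\ref{t:3}\ref{t:3i}, so $J_M$ is firmly nonexpansive with $\dom J_M=\HH$ by Lemma~\ref{l:0}\ref{l:0iii}, while Lemma~\ref{l:1}\ref{l:1ii} gives $\Fix J_M=\zer M=S_2$, which is nonempty by assumption. Writing $\widehat R=2J_M-\Id_\HH$, the operator $\widehat R$ is nonexpansive with $\Fix\widehat R=\Fix J_M=S_2$, and the recursion becomes $x_{n+1}=x_n+(\lambda_n/2)(\widehat Rx_n-x_n)$, where $(\lambda_n/2)_{n\in\NN}$ lies in $\left]0,1\right[$ and satisfies $\sum_{n\in\NN}(\lambda_n/2)(1-\lambda_n/2)=\tfrac14\sum_{n\in\NN}\lambda_n(2-\lambda_n)=\pinf$. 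The weak convergence of $(x_n)_{n\in\NN}$ to some point of $\Fix\widehat R=S_2$, that is, to a solution to Problem~\ref{prob:2}, then follows, for instance, from the Krasnosel'ski\u{\i}--Mann theorem \cite[Theorem~5.15]{Livre1} (see also \cite{Roc76a}).

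The only genuinely delicate point is the algebraic reduction in the second paragraph: the single projection $\proj_V$ appearing in \eqref{e:9} must be shown to reproduce the sandwiched structure $\proj_V\circ R\circ\proj_V$ of $J_M$. This is exactly where the invariance $x_n\in V$ is essential, for it lets the inner $\proj_V$ act as the identity on $x_n$. Once this identification is in place, the convergence assertion is a routine appeal to the standard theory, so no further obstacle is anticipated.
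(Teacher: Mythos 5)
Your proof is correct and follows essentially the same route as the paper: both verify that the iterates remain in $V$ and thereby identify \eqref{e:9} with the relaxed proximal point iteration $x_{n+1}=x_n+\lambda_n(J_Mx_n-x_n)$ for the maximally monotone operator $M=\proxc{\proj_V}{A}$ via Theorem~\ref{t:3}\ref{t:3i}--\ref{t:3ii}. The only difference is in the final step, where the paper simply cites the relaxed proximal point convergence result \cite[Lemma~2.2(vi)]{Joca09}, whereas you rederive it through the Krasnosel'ski\u{\i}--Mann theorem applied to the nonexpansive reflection $2J_M-\Id_\HH$ --- a standard equivalence --- while also spelling out the algebra (induction giving $x_n\in V$, and $\proj_Vz_n=J_Mx_n-x_n$) that the paper leaves implicit.
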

\begin{proof}
Set $M=\proxc{\proj_V}{(\proxcc{L}{(\gamma B)})}$.
Since $(x_n)_{n\in\NN}$ lies in $V$, it follows from
Theorem~\ref{t:3}\ref{t:3i}--\ref{t:3ii} that $(x_n)_{n\in\NN}$ is
generated by the proximal point algorithm, to wit, 
\begin{equation}
\label{e:34}
(\forall n\in\NN)\quad x_{n+1}=x_n+\lambda_n(J_Mx_n-x_n).
\end{equation}
Therefore, we derive from \cite[Lemma~2.2(vi)]{Joca09} that
$(x_n)_{n\in\NN}$ converges weakly to a point in $\zer M$, i.e., 
a solution to \eqref{e:p2}. 
\end{proof}

\begin{remark}[weak convergence]
\label{r:4}
The weak convergence of $(x_n)_{n\in\NN}$ in Proposition~\ref{p:22}
cannot be improved to strong convergence in general. Indeed,
suppose that, in Problem~\ref{prob:2}, $\GG=\HH$, $L=\Id_\HH$, and
$B=N_C$, where $C$ is a nonempty closed convex subset of $\HH$.
Then, if we take the parameters $(\lambda_n)_{n\in\NN}$ to be 1,
the proximal point algorithm \eqref{e:9} reduces to the alternating
projection method 
$(\forall n\in\NN)$ $x_{n+1}=\proj_V(\proj_C x_n)$. In
\cite{Hund04}, a hyperplane $V$ and a cone $C$ are constructed for
which $(x_n)_{n\in\NN}$ fails to converge strongly. Note, however,
that using the strongly convergent modifications of \eqref{e:34}
discussed in \cite{Moor01,Solo00}, it is straightforward to obtain
strongly convergent methods to solve Problem~\ref{prob:2}.
Let us add that, as shown in \cite[Lemma~2.2(vi)]{Joca09}, 
the weak convergence result in Proposition~\ref{p:22} remains 
valid if $q_n$ is defined as $q_n=J_{\gamma B}y_n+c_n-y_n$ in
\eqref{e:9}, where $(c_n)_{n\in\NN}$ is a sequence modeling
approximate implementations of $J_{\gamma B}$ and satisfies
$\sum_{n\in\NN}\lambda_n\|c_n\|<\pinf$.
\end{remark}

Henceforth, we specialize Problems~\ref{prob:1} and \ref{prob:2} to
scenarios of interest.

\begin{example}[feasibility problem]
\label{ex:76}
Let $0\neq m\in\NN$ and let $(\mathsf{C}_i)_{1\leq i\leq m}$ be
nonempty closed convex subsets of a real Hilbert space
$\mathsf{H}$. Set $\HH=\bigoplus_{i=1}^m\mathsf{H}$, 
$V=\menge{(\mathsf{x},\ldots,\mathsf{x})\in\HH}
{\mathsf{x}\in\mathsf{H}}$, and
$C=\mathsf{C}_1\times\cdots\times \mathsf{C}_m$.
Since $V$ is isomorphic to
$\mathsf{H}$, Problem~\ref{prob:1} with $\GG=\HH$, $L=\Id_\HH$, and
$B=N_C=A$ amounts to finding a point in $V\cap C$, i.e., a point in
$\bigcap_{i=1}^m\mathsf{C}_i$, while Theorem~\ref{t:3}\ref{t:3iv}
asserts that the relaxation given in Problem~\ref{prob:2} amounts
to finding a fixed point of $\proj_V\circ\proj_C$, i.e., of
$(1/m)\sum_{i=1}^m\proj_{\mathsf{C}_i}$ or, equivalently, a
minimizer of $\sum_{i=1}^md_{\mathsf{C}_i}^2$. This product space
framework for relaxing inconsistent feasibility problems was
proposed in \cite[Section~II.2]{Guyp76} and re-examined in
\cite{Baus93,Sign94}.
\end{example}

\begin{example}[resolvent mixtures]
\label{ex:23}
Let $0\neq p\in\NN$, let $\gamma\in\RPP$, and let $V\neq\{0\}$ be 
a closed vector subspace of $\HH$. For every $k\in\{1,\ldots,p\}$,
let $\GG_k$ be a real Hilbert space, let $L_{k}\in\BL(\HH,\GG_k)$,
let $\omega_k\in\RPP$, and let $B_k\colon\GG_k\to 2^{\GG_k}$ be
maximally monotone. Suppose that
$0<\sum_{k=1}^p\omega_k\|L_k\|^2\leq 1$ and define $\GG$, $L$, and
$B$ as in Example~\ref{ex:5}. Then the objective of
Problem~\ref{prob:1} is to 
\begin{equation}
\label{e:p3}
\text{find}\;\:x\in V\;\:\text{such that}\;\:
(\forall k\in\{1,\ldots,p\})\quad 0\in B_k(L_kx).
\end{equation}
Now let $M$ be the resolvent mixture of the operators
$((\gamma B_k)^{-1})_{1\leq k\leq p}$ (see Example~\ref{ex:5}).
Then the relaxed Problem~\ref{prob:2} is to 
\begin{equation}
\label{e:p14}
\text{find}\;\:x\in\HH\;\:\text{such that}\;\:
0\in\big(\proxc{\proj_V}{M^{-1}}\big)x
\end{equation}
or, equivalently, upon invoking Theorem~\ref{t:3}\ref{t:3v}, to
\begin{equation}
\label{e:p15}
\text{find}\;\:x\in\HH\;\:\text{such that}\;\:
0\in N_Vx+\sum_{k=1}^p\omega_kL_k^*
\big(\yosi{B_k}{\gamma}(L_kx)\big).
\end{equation}
In addition, it follows from Proposition~\ref{p:22} that, given
$x_0\in V$ and a sequence $(\lambda_n)_{n\in\NN}$ in
$\left]0,2\right[$ such that
$\sum_{n\in\NN}\lambda_n(2-\lambda_n)=\pinf$, the sequence
$(x_n)_{n\in\NN}$ constructed by the algorithm 
\begin{equation}
\label{e:91}
\begin{array}{l}
\text{for}\;n=0,1,\ldots\\
\left\lfloor
\begin{array}{l}
\text{for}\;k=1,\ldots,p\\
\left\lfloor
\begin{array}{l}
y_{k,n}=L_kx_n\\
q_{k,n}=J_{\gamma B_k}y_{k,n}-y_{k,n}\\
\end{array}
\right.\\
z_n=\sum_{k=1}^p\omega_kL_k^*q_{k,n}\\
x_{n+1}=x_n+\lambda_n\proj_Vz_n
\end{array}
\right.
\end{array}
\end{equation}
converges weakly to a solution to the relaxed problem if one 
exists.
\end{example}

\begin{example}[common zero problem]
\label{ex:60}
Suppose that, in Example~\ref{ex:23}, we have 
$(\forall k\in\{1,\ldots,p\})$ $\GG_k=\HH$ and $L_k=\Id_\HH$. Then
\eqref{e:p3} consists of finding 
$x\in V\cap\bigcap_{k=1}^p\zer B_k$
and its relaxation \eqref{e:p14}/\eqref{e:p15} consists of finding
a zero of $N_V+\sum_{k=1}^p\omega_k\yosi{B_k}{\gamma}$. This
relaxation was proposed in \cite{Opti04} and it originates in
Legendre's method of least-squares \cite{Lege05} to relax 
inconsistent systems of linear equations (see
\cite[Example~4.3]{Siop13}).
\end{example}

\begin{example}[Wiener systems]
\label{ex:61}
In Example~\ref{ex:23}, suppose that, for every 
$k\in\{1,\ldots,p\}$, 
$B_k=(\Id_{\GG_k}-F_k+p_k)^{-1}-\Id_{\GG_k}$, where
$F_k\colon\GG_k\to\GG_k$ is firmly nonexpansive and $p_k\in\GG_k$.
Then we recover the Wiener system setting investigated in
\cite{Siim22}. Specifically, \eqref{e:p3} reduces to the nonlinear 
reconstruction problem \cite[Problem~1.1]{Siim22}
\begin{equation}
\label{e:44}
\text{find}\;\:x\in V\;\:\text{such that}\;\:
(\forall k\in\{1,\ldots,p\})\quad F_k(L_kx)=p_k
\end{equation}
and \eqref{e:p14} yields the relaxed problem 
\cite[Problem~1.3]{Siim22}
\begin{equation}
\label{e:45}
\text{find}\:\;x\in V\:\;\text{such that}\:\;
(\forall y\in V)\;\:
\sum_{k=1}^p\omega_k\scal{L_ky-L_kx}{F_k(L_kx)-p_k}=0.
\end{equation}
In addition, given $x_0\in V$ and a sequence 
$(\lambda_n)_{n\in\NN}$ in $\left]0,2\right[$ such that
$\sum_{n\in\NN}\lambda_n(2-\lambda_n)=\pinf$, the sequence
$(x_n)_{n\in\NN}$ constructed by the algorithm 
\begin{equation}
\label{e:94}
\begin{array}{l}
\text{for}\;n=0,1,\ldots\\
\left\lfloor
\begin{array}{l}
\text{for}\;k=1,\ldots,p\\
\left\lfloor
\begin{array}{l}
y_{k,n}=L_kx_n\\
q_{k,n}=p_k-F_ky_{k,n}\\
\end{array}
\right.\\
z_n=\sum_{k=1}^p\omega_kL_k^*q_{k,n}\\
x_{n+1}=x_n+\lambda_n\proj_Vz_n
\end{array}
\right.
\end{array}
\end{equation}
converges weakly to a solution to the relaxed problem if one 
exists (see \cite[Proposition~4.3]{Siim22} for existence
conditions).
\end{example}
\begin{proof}
For every $k\in\{1,\ldots,p\}$, it follows from \eqref{e:f4} that 
$\Id_{\GG_k}-F_k+p_k\colon\GG_k\to\GG_k$ is firmly nonexpansive and
therefore from Lemma~\ref{l:0} that $B_k$ is maximally 
monotone, with $J_{B_k}=\Id_{\GG_k}-F_k+p_k$ and
$\yosi{B_k}{1}=F_k-p_k$. In addition, we
observe that this choice of the operators $(B_k)_{1\leq k\leq p}$
makes \eqref{e:44} a realization of \eqref{e:p3}, and 
\eqref{e:94} a realization of \eqref{e:91}. At the same time,
\eqref{e:p14}/\eqref{e:p15} with $\gamma=1$ becomes
\begin{equation}
\label{e:p19}
\text{find}\;\:x\in\HH\;\:\text{such that}\;\:
0\in N_Vx+\sum_{k=1}^p\omega_kL_k^*\big(F_k(L_kx)-p_k\big),
\end{equation}
which is precisely \eqref{e:45}.
\end{proof}

\begin{example}[proximal composition]
\label{ex:51}
In Problem~\ref{prob:1}, suppose that $B=\partial g$, where
$g\in\Gamma_0(\GG)$. Then
\eqref{e:p1} becomes
\begin{equation}
\label{e:p31}
\text{find}\;\:x\in V\;\:\text{such that}\;\:Lx\in\Argmin g.
\end{equation}
Now set $f=\proxcc{L}{(\gamma g)}$. Then the relaxation 
\eqref{e:p2} becomes
\begin{equation}
\label{e:p32}
\minimize{x\in\HH}{\big(\proxc{\proj_V}{f}\big)(x)}
\end{equation}
or, equivalently,
\begin{equation}
\label{e:p33}
\minimize{x\in V}{\big(\!\yosi{g}{\gamma}\big)(Lx)}.
\end{equation}
In addition, given $x_0\in V$ and a sequence
$(\lambda_n)_{n\in\NN}$ in $\left]0,2\right[$ such that
$\sum_{n\in\NN}\lambda_n(2-\lambda_n)=\pinf$, the algorithm 
\begin{equation}
\label{e:p39}
\begin{array}{l}
\text{for}\;n=0,1,\ldots\\
\left\lfloor
\begin{array}{l}
y_n=Lx_n\\
q_n=\prox_{\gamma g}y_n-y_n\\
z_n=L^*q_n\\
x_{n+1}=x_n+\lambda_n\proj_Vz_n
\end{array}
\right.
\end{array}
\end{equation}
produces a sequence $(x_n)_{n\in\NN}$ that converges weakly to a
solution to the relaxed problem if one exists.
\end{example}
\begin{proof}
The fact that \eqref{e:p1} yields \eqref{e:p31} is a consequence of
Fermat's rule \eqref{e:pierre}. Next, we derive from
Example~\ref{ex:7}\ref{ex:7ii} that, in Problem~\ref{prob:2},
\begin{equation}
A=\proxcc{L}{(\gamma B)}
=\proxcc{L}{\partial(\gamma g)}=\partial(\proxcc{L}{(\gamma g)})
=\partial f.
\end{equation}
Thus, by Example~\ref{ex:8}\ref{ex:8ii},
\begin{equation}
\label{e:p38}
\proxc{\proj_V}{A}
=\proxc{\proj_V}{\partial f}
=\partial\big(\proxc{\proj_V}{f}\big).
\end{equation}
Therefore, by Fermat's rule \eqref{e:pierre}, the solution set of 
Problem~\ref{prob:2} is
\begin{equation}
\label{e:p37}
\zer\big(\proxc{\proj_V}{A}\big)
=\Argmin\big(\proxc{\proj_V}{f}\big).
\end{equation}
On the other hand, since $\dom\yosi{g}{\gamma}=\GG$, 
\cite[Example~23.3 and Theorem~16.47(i)]{Livre1} yield
\begin{equation}
\label{e:p36}
N_V+L^*\circ\big(\yosi{\;(\partial g)}{\gamma}\big)\circ L
=\partial\iota_V+
L^*\circ\big(\nabla\yosi{g}{\gamma}\big)\circ L
=\partial\big(\iota_V+(\!\yosi{g}{\gamma})\circ L\big).
\end{equation}
Thus, we deduce from Theorem~\ref{t:3}\ref{t:3v} and
\eqref{e:pierre} that
\begin{equation}
\zer\big(\proxc{\proj_V}{A}\big)=
\zer\big(N_V+L^*\circ\yosi{\;(\partial g)}{\gamma}\circ L\big)
=\Argmin\big(\iota_V+(\!\yosi{g}{\gamma})\circ L\big).
\end{equation}
In view of \eqref{e:p38}, this confirms the equivalence between
\eqref{e:p32} and \eqref{e:p33}. The last claim is an application
of Proposition~\ref{p:22} using \eqref{e:dprox}.
\end{proof}

\begin{example}[proximal mixture]
\label{ex:52}
In the context of Example~\ref{ex:51}, choose $\GG$, $L$, and $g$
as in Example~\ref{ex:40}. Then the initial problem \eqref{e:p31}
is to
\begin{equation}
\label{e:p51}
\text{find}\;\:x\in V\;\:\text{such that}\;\:
(\forall k\in\{1,\ldots,p\})\;\;L_kx\in\Argmin g_k.
\end{equation}
Now let $m$ be the proximal mixture of the functions
$((\gamma g_k)^*)_{1\leq k\leq p}$ (see Example~\ref{ex:40}). 
Then the relaxation of \eqref{e:p51} given by \eqref{e:p32} is to
\begin{equation}
\label{e:p52}
\minimize{x\in\HH}{\big(\proxc{\proj_V}{m^*}\big)(x)}
\end{equation}
or, equivalently, via \eqref{e:p33}, to
\begin{equation}
\label{e:p53}
\minimize{x\in V}{\sum_{k=1}^p\omega_k
\big(\!\yosi{g_k}{\gamma}\big)(L_kx)}.
\end{equation}
This problem can be solved via \eqref{e:91}, where 
$J_{\gamma B_k}$ is replaced by $\prox_{\gamma g_k}$.
\end{example}

\begin{remark}[proximal average]
\label{ex:d3}
In Example~\ref{ex:51}, suppose that $V=\HH$ and that $L$ is an 
isometry. Then it follows from Proposition~\ref{p:12}\ref{p:12vi}
that the relaxed problem \eqref{e:p32} consists of minimizing the
proximal composition $f=\proxc{L}{(\gamma g)}$. In particular,
if $f$ is the proximal average of the functions 
$(g_k)_{1\leq k\leq p}$ (see \eqref{e:jj7}), it follows from
Example~\ref{ex:52} that minimizing it is an exact relaxation of
the problem of finding a common minimizer of the functions
$(g_k)_{1\leq k\leq p}$. This provides a principled interpretation 
for methodologies adopted in \cite{Liti20,Yuyl13}.
\end{remark}

\begin{example}[split feasibility]
\label{ex:62}
Suppose that, in Example~\ref{ex:52}, for every
$k\in\{1,\ldots,p\}$, $g_k=\iota_{D_k}$, where $D_k$ is a nonempty
closed convex subset of $\GG_k$. Then \eqref{e:p51} is the
split feasibility problem \cite{Reic20} 
\begin{equation}
\label{e:p62}
\text{find}\;\:x\in V\;\:\text{such that}\;\:
(\forall k\in\{1,\ldots,p\})\quad L_kx\in D_k,
\end{equation}
while the relaxation \eqref{e:p52}/\eqref{e:p53} is to 
\begin{equation}
\label{e:p63}
\minimize{x\in V}{\sum_{k=1}^p\omega_k d_{D_k}^2(L_kx)}.
\end{equation}
This problem can be solved via \eqref{e:91}, where 
$J_{\gamma B_k}$ is replaced by $\proj_{D_k}$.
\end{example}


\begin{thebibliography}{99} 
\setlength{\itemsep}{-1pt}
\small

\bibitem{Alim14} 
M. Alimohammady, M. Ramazannejad, and M. Roohi, 
Notes on the difference of two monotone operators,
{\em Optim. Lett.},
vol. 8, pp. 81--84, 2014. 

\bibitem{Atto84} 
H. Attouch,
{\em Variational Convergence for Functions and Operators}.
Pitman, Boston, MA, 1984.

\bibitem{Barb10}
V. Barbu,
{\em Nonlinear Differential Equations of Monotone Types in Banach
Spaces}. 
Springer, New York, 2010.

\bibitem{Baus16}
S. Bartz, H. H. Bauschke, S. M. Moffat, and X. Wang, 
The resolvent average of monotone operators: Dominant and 
recessive properties,
{\em SIAM J. Optim.},
vol. 26, pp. 602--634, 2016.

\bibitem{Baus93}
H. H. Bauschke and J. M. Borwein,
On the convergence of von Neumann's alternating projection 
algorithm for two sets, 
{\em Set-Valued Anal.},
vol. 1, pp. 185--212, 1993.

\bibitem{Sico03}
H. H. Bauschke, J. M. Borwein, and P. L. Combettes,
Bregman monotone optimization algorithms,
{\em SIAM J. Control Optim.}, 
vol. 42, pp. 596--636, 2003.

\bibitem{Buim19} 
H. H. Bauschke, M. N. B\`ui, and X. Wang,
On sums and convex combinations of projectors onto convex sets
{\em J. Approx. Theory}, 
vol. 242, pp. 31--57, 2019.

\bibitem{Moor01} 
H. H. Bauschke and P. L. Combettes, 
A weak-to-strong convergence principle for Fej\'er-monotone methods
in Hilbert spaces,
{\em Math. Oper. Res.},
vol. 26, pp. 248--264, 2001.

\bibitem{Livre1} 
H. H. Bauschke and P. L. Combettes, 
{\em Convex Analysis and Monotone Operator Theory in Hilbert 
Spaces}, 2nd ed. 
Springer, New York, 2017.

\bibitem{Baus08}
H. H. Bauschke, R. Goebel, Y. Lucet, and X. Wang, 
The proximal average: Basic theory, 
{\em SIAM J. Optim.},
vol. 19, pp. 766--785, 2008.

\bibitem{Baus12}
H. H. Bauschke, S. M. Moffat, and X. Wang,
Firmly nonexpansive mappings and maximally monotone operators:
Correspondence and duality,
{\em Set-Valued Var. Anal.},
vol. 20, pp. 131--153, 2012.

\bibitem{Baus13}
H. H. Bauschke, S. M. Moffat, and X. Wang,
Near equality, near convexity, sums of maximally monotone 
operators, and averages of firmly nonexpansive mappings,
{\em Math. Program.},
vol. B139, pp. 55--70, 2013.

\bibitem{Beck14}
S. R. Becker and P. L. Combettes,
An algorithm for splitting parallel sums of linearly composed 
monotone operators, with applications to signal recovery,
{\em J. Nonlinear Convex Anal.},
vol. 15, pp. 137--159, 2014.

\bibitem{Brez73} 
H. Br\'ezis,
{\em Op\'erateurs Maximaux Monotones et Semi-Groupes de
Contractions dans les Espaces de Hilbert.}
North-Holland/Elsevier, New York, 1973.

\bibitem{Bri18b}
L. M. Brice\~no-Arias, D. Kalise, and F. J. Silva,
Proximal methods for stationary mean field games with local
couplings,
{\em SIAM J. Control Optim.}, 
vol. 56, pp. 801--836, 2018.

\bibitem{Bric23}
L. M. Brice\~no-Arias and F. Rold\'an,
Resolvent of the parallel composition and the proximity operator of
the infimal postcomposition,
{\em Optim. Lett.},
vol. 17, pp. 399--412, 2023.

\bibitem{Brow69}
F. E. Browder and C. P. Gupta,
Monotone operators and nonlinear integral equations of Hammerstein
type,
{\em Bull. Amer. Math. Soc.},
vol. 75, pp. 1347--1353, 1969.

\bibitem{Buim22}
M. N. B\`ui,
A decomposition method for solving multicommodity network
equilibria,
{\em Oper. Res. Lett.}, 
vol. 50, pp. 40--44, 2022.

\bibitem{Sico20}
M. N. B\`ui and P. L. Combettes,
The Douglas-Rachford algorithm converges only weakly,
{\em SIAM J. Control Optim.},
vol. 58, pp. 1118--1120, 2020.

\bibitem{Jmaa20}
M. N. B\`{u}i and P. L. Combettes,
Warped proximal iterations for monotone inclusions,
{\em J. Math. Anal. Appl.},
vol. 491, art. 124315, 21 pp., 2020.

\bibitem{Joca22}
M. N. B\`ui and P. L. Combettes,
Analysis and numerical solution of a modular
convex Nash equilibrium problem,
{\em J. Convex Anal.},
vol. 29, pp. 1007--1021, 2022. 

\bibitem{Chri08}
O. Christensen, 
{\em Frames and Bases -- An Introductory Course.}
Birkh\"auser, Boston, MA, 2008.

\bibitem{Sign94} 
P. L. Combettes, Inconsistent signal feasibility
problems: Least-squares solutions in a product space,
{\em IEEE Trans. Signal Process.},
vol. 42, pp. 2955--2966, 1994.

\bibitem{Cras95} 
P. L. Combettes, 
Construction d'un point fixe commun \`a une famille de 
contractions fermes,
{\em C. R. Acad. Sci. Paris S\'er. I Math.},
vol. 320, pp. 1385--1390, 1995.

\bibitem{Opti04}
P. L. Combettes, 
Solving monotone inclusions via compositions of nonexpansive
averaged operators,
{\em Optimization}, 
vol. 53, pp. 475--504, 2004.

\bibitem{Joca09}
P. L. Combettes, 
Iterative construction of the resolvent of a sum of maximal 
monotone operators,
{\em J. Convex Anal.}, 
vol. 16, pp. 727--748, 2009.

\bibitem{Siop13} 
P. L. Combettes, 
Systems of structured monotone inclusions:
Duality, algorithms, and applications,
{\em SIAM J. Optim.}, 
vol. 23, pp. 2420--2447, 2013.

\bibitem{Bord18}
P. L. Combettes,
Monotone operator theory in convex optimization,
{\em Math. Program.},
vol. B170, pp. 177--206, 2018.

\bibitem{Svva20} 
P. L. Combettes and J.-C. Pesquet,
Deep neural network structures solving variational 
inequalities,
{\em Set-Valued Var. Anal.},
vol. 28, pp. 491--518, 2020. 

\bibitem{Sign21}
P. L. Combettes and J.-C. Pesquet,
Fixed point strategies in data science,
{\em IEEE Trans. Signal Process.},
vol. 69, pp. 3878--3905, 2021.

\bibitem{Smms05}
P. L. Combettes and V. R. Wajs, 
Signal recovery by proximal forward-backward splitting,
{\em Multiscale Model. Simul.}, 
vol. 4, pp. 1168--1200, 2005.

\bibitem{Siim22}
P. L. Combettes and Z. C. Woodstock,
A variational inequality model for the construction of signals 
from inconsistent nonlinear equations,
{\em SIAM J. Imaging Sci.},
vol. 15, pp. 84--109, 2022. 

\bibitem{Facc03} 
F. Facchinei and J.-S. Pang,
{\em Finite-Dimensional Variational Inequalities and 
Complementarity Problems.}
Springer, New York, 2003.

\bibitem{Glow16}
R. Glowinski, S. J. Osher, and W. Yin (Eds.),
{\em Splitting Methods in Communication, Imaging, 
Science, and Engineering}. 
Springer, New York, 2016.

\bibitem{Hund04} 
H. S. Hundal,
An alternating projection that does not converge in norm,
{\em Nonlinear Anal.}, 
vol. 57, pp. 35--61, 2004.

\bibitem{Lege05} 
A. M. Legendre,
{\em Nouvelles M\'ethodes pour la D\'etermination des Orbites des
Com\`etes.}
Firmin Didot, Paris, 1805.

\bibitem{Lehd99} 
N. Lehdili and B. Lemaire, 
The barycentric proximal method,
{\em Comm. Appl. Nonlinear Anal.},
vol. 6, pp. 29--47, 1999.

\bibitem{Liti20} 
T. Li, A. K. Sahu, M. Zaheer, M. Sanjabi, A. Talwalkar,
and V. Smith,
Federated optimization in heterogeneous networks,
{\em Proc. Machine Learn. Syst.},
vol. 2, pp. 429--450, 2020.

\bibitem{Luce10} 
Y. Lucet,
What shape is your conjugate? A survey of computational convex 
analysis and its applications,
{\em SIAM Rev.},
vol. 52, pp. 505--542, 2010.

\bibitem{Merc80} 
B. Mercier,  
{\em In\'equations Variationnelles de la M\'ecanique}
(Publications Math\'ematiques d'Orsay, no. 80.01).
Universit\'e de Paris-XI, Orsay, France, 1980. 

\bibitem{Mint62}
G. J. Minty, 
Monotone (nonlinear) operators in Hilbert space,
{\em Duke Math. J.},
vol. 29, pp. 341--346, 1962. 

\bibitem{Mor62b} 
J. J. Moreau, 
Fonctions convexes duales et points proximaux dans un 
espace hilbertien,
{\em C. R. Acad. Sci. Paris},
vol. A255, pp. 2897--2899, 1962.

\bibitem{Mor63a} 
J. J. Moreau, 
Propri\'et\'es des applications ``prox'',
{\em C. R. Acad. Sci. Paris},
vol. A256, pp. 1069--1071, 1963.

\bibitem{More65} 
J. J. Moreau,
Proximit\'e et dualit\'e dans un espace hilbertien,
{\em Bull. Soc. Math. France},
vol. 93, pp. 273--299, 1965.

\bibitem{Penn01} 
T. Pennanen,
On the range of monotone composite mappings,
{\em J. Nonlinear Convex Anal.},
vol. 2, pp. 193--202, 2001.

\bibitem{Guyp76} 
G. Pierra, 
{\em M\'ethodes de D\'ecomposition et Croisement d'Algorithmes 
pour des Probl\`emes d'Optimisation}.
Th\`ese, Universit\'e de Grenoble, France, 1976.

\bibitem{Pier76} 
G. Pierra, 
\'Eclatement de contraintes en parall\`ele pour
la minimisation d'une forme quadratique,
{\em Lecture Notes in Comput. Sci.},
vol. 41, pp. 200--218, 1976.

\bibitem{Pier84} 
G. Pierra, 
Decomposition through formalization in a product space,
{\em Math. Program.},
vol. 28, pp. 96--115, 1984.

\bibitem{Reic20} 
S. Reich, M. T. Truong, and T. N. H. Mai, 
The split feasibility problem with multiple output sets 
in Hilbert spaces,
{\em  Optim. Lett.},
vol. 14, pp. 2335--2353, 2020.

\bibitem{Rock67} 
R. T. Rockafellar,
Duality and stability in extremum problems involving convex 
functions,
{\em Pacific J. Math.},
vol. 21, pp. 167--187, 1967. 

\bibitem{Roc76a} 
R. T. Rockafellar, 
Monotone operators and the proximal point algorithm,
{\em SIAM J. Control Optim.},
vol. 14, pp. 877--898, 1976.

\bibitem{Roc76b}
R. T. Rockafellar,
Augmented Lagrangians and applications of the proximal point 
algorithm in convex programming,
{\em Math. Oper. Res.},
vol. 1, pp. 97--116, 1976.

\bibitem{Solo00} 
M. V. Solodov and B. F. Svaiter, 
Forcing strong convergence of proximal point iterations in 
a Hilbert space,
{\em Math. Program.},
vol. 87, pp. 189--202, 2000.

\bibitem{Yuyl13}
Y.-L. Yu,
Better approximation and faster algorithm using the proximal
average, 
{\em Proc. Conf. Adv. Neural Inform. Process. Syst.}, 
pp. 458--466, 2013.

\bibitem{Zali02} 
C. Z\u{a}linescu, 
{\em Convex Analysis in General Vector Spaces.} 
World Scientific Publishing, River Edge, NJ, 2002.

\bibitem{Zara74} 
E. H. Zarantonello, 
L'alg\`ebre des projecteurs coniques,
{\em Lecture Notes in Econom. and Math. Systems},
vol. 102, pp. 232--243, 1974.

\bibitem{Zei90B} 
E. Zeidler,
{\em Nonlinear Functional Analysis and Its Applications II/B --
Nonlinear Monotone Operators}.
Springer-Verlag, New York, 1990.
\end{thebibliography}
\end{document}